\newtheorem{theorem}{Theorem}[section]
\newtheorem*{theorem*}{Theorem}
\newtheorem{lemma}{Lemma}[section]
\newtheorem{proposition}{Proposition}[section]
\newtheorem*{proposition*}{Proposition}
\newtheorem{corollary}{Corollary}[section]
\theoremstyle{definition}
\newtheorem{definition}{Definition}[section]
\newtheorem{remark}{Remark}[section]
\newtheorem*{conjecture*}{Conjecture}
\newtheorem*{notation*}{Notation}
\newtheorem{notation}{Notation}[section]
\newtheorem{example}{Example}[section]
\numberwithin{equation}{section}
\def\1{1\kern-.3em1}
\newcommand{\Z}{{\mathbb Z}}
\newcommand{\Q}{{\mathbb Q}}
\newcommand{\R}{{\mathbb R}}
\newcommand{\C}{{\mathbb C}}
\newcommand{\K}{{\mathbb K}}
\newcommand{\F}{{\mathbb F}}
\newcommand{\U}{{\rm U}}
\newcommand{\osp}{{\mathfrak{osp}}}
\newcommand{\gl}{{\mathfrak{gl}}}
\newcommand{\g}{{\mathfrak g}}
\newcommand{\fg}{{\mathfrak g}}
\newcommand{\fb}{{\mathfrak b}}
\newcommand{\fh}{{\mathfrak h}}
\newcommand{\id}{{\rm{id}}}
\newcommand{\Hom}{{\rm{Hom}}}
\newcommand{\End}{{\rm{End}}}
\newcommand{\ot}{\otimes}
\newcommand{\cT}{{\mathscr T}}
\newcommand{\cH}{{\mathscr H}}
\newcommand{\cC}{{\mathscr C}}
\newcommand{\cD}{{\mathscr D}}
\newcommand{\cE}{{\mathscr E}}
\newcommand{\cF}{{\mathcal F}}
\newcommand{\cG}{{\mathcal G}}
\newcommand{\cS}{{\mathcal S}}
\newcommand{\cI}{{\mathcal I}}
\newcommand{\cM}{{\mathcal M}}
\newcommand{\cV}{{\mathcal V}}
\newcommand{\HMod}{\text{$H$-Mod}}
\newcommand{\Sym}{{\rm{Sym}}}
\newcommand{\sdim}{{\rm{sdim}}}
\begin{document}
\title[Invariant theory of quantum supergroups]
{First fundamental theorems of invariant theory for quantum supergroups}
\author[G. I. Lehrer]{G. I. Lehrer}
\author[Hechun Zhang]{Hechun Zhang}
\author[R. B. Zhang]{R. B. Zhang}
\address{School of Mathematics and Statistics,
The University of Sydney, Sydney, NSW 2006,  Australia}
\email{gustav.lehrer@sydney.edu.au, ruibin.zhang@sydney.edu.au}
\address{Department of Mathematical Sciences, Tsinghua University, Beijing,  China}
\email{hzhang@math.tsinghua.edu.cn}

\begin{abstract}
Let $\U_q(\fg)$ be the quantum supergroup of $\gl_{m|n}$ or the
modified quantum supergroup of $\osp_{m|2n}$ over the field of rational functions in $q$,
and let $V_q$ be the natural module for $\U_q(\fg)$.
There exists a unique tensor functor, associated with $V_q$, from the category of ribbon graphs to
the category of finite dimensional representations of $\U_q(\fg)$, which preserves ribbon category structures.  
We show that
this functor is full in the cases $\fg=\gl_{m|n}$ or $\osp_{2\ell+1|2n}$.
For $\fg=\osp_{2\ell|2n}$,
we show that the space $\Hom_{\U_q(\fg)}(V_q^{\otimes r}, V_q^{\otimes s})$ is
spanned by images of ribbon graphs if $r+s< 2\ell(2n+1)$. The proofs involve an equivalence of module categories for 
two versions of the quantisation of $\U(\fg)$.
\end{abstract}
\maketitle

\tableofcontents

\section{Introduction}\label{sect:intro}

Quantum supergroups \cite{BGZ, Y91, ZGB91b} are a class of quasi-triangular \cite{D86}
Hopf superalgebras introduced in the early 90s,
which have since been studied quite extensively; see e.g., \cite{L, MZ, WZ, Z92, Z92b, Z93, Zo98}
for results on their finite dimensional irreducible representations.
Quantum supergroups have been applied to obtain interesting results in several areas,
most notably,  in the study of Yang-Baxter type integrable models \cite{BGZ, BGLZ, ZBG91},
construction of topological invariants of knots and $3$-manifolds \cite{LGZ, Z92a, Z94, Z95}
and development of quantum supergeometry \cite{Z98, Z04}.
It is the quasi triangular Hopf superalgebraic structure \cite{GZB, KT, Y91}
that renders quantum supergroups so useful in so many areas.
A brief review of early works on the theory and applications of
quantum supergroups can be found in \cite{Z98}. Partially successful constructions
of crystal and canonical bases for quantum supergroups were given in \cite{BKK, {Du15}, MZ, ZZ05, ZZ06, Z, Zo99, Zo01}.

This paper develops the invariant theory of the quantum supergroups associated with the classical series of 
Lie superalgebras $\gl_{m|n}$ and $\osp_{m|2n}$ \cite{Kac, S}.
Invariant theory lies at the foundation of symmetry in physics; the very concept of a symmetry is an invariant theoretical notion. 
A quantum system is said to have a certain symmetry if some algebraic structure, e.g., a Lie (super)group or quantum  (super)group,  
acts on the Hilbert space and on the algebra of operators on the Hilbert space of the quantum system in such a way 
that the Hamiltonian of the system is an invariant of the structure. 
It is of crucial importance to determine the invariants of such actions in order to understand physical properties of the system.  
This underscores the relevance of this paper to fundamental physics.

We shall generalise to the quantum supergroup context results on the invariant theory of ordinary quantum groups \cite{D86, J} 
obtained in the endomorphsim algebra setting in \cite{LZ06}. 
We will do this by following the categorical approach to invariant theory developed in \cite{LZ12, LZ14a, LZ14b} for the orthogonal group 
and orthosymplectic supergroup. The ground work for this program has already been done in \cite{Z95, Z02}
in the process of constructing invariants of knots and $3$-manifolds using 
quantum supergroups.
In that work, a braided tensor functor from the category of coloured ribbon graphs to the category of
finite dimensional representations of the quantum supergroup \cite[Theorem 1]{Z95} played a crucial role.
It is a special case of this functor which provides the natural pathway to the invariant theory of the quantum supergroups.

Let $\U_q(\F)$ denote either the quantum general linear supergroup or
(modified) quantum orthosymplectic supergroup
over the field $\F:=\C(q)$ of rational functions of $q$,
and let $V_q$ be the natural module for $\U_q(\F)$.
Denote by $\cT_\fg(\F)$ the corresponding category of tensor modules for $\U_q(\F)$, that is,
the full subcategory of the category of finite dimensional
$\Z_2$-graded $\U_q(\F)$-modules whose objects are repeated tensor products
of $V_q$ and its dual  $V^*_q$ ($V^*_q\cong V_q$
in the case of quantum $\osp_{m|2n}$).
There is a unique braided tensor
functor $\cF_{q, \F}$ from the category of ribbon graphs (or non-directed ribbon graphs) to $\cT_\fg(\F)$,
which preserves ribbon category structures.

When $\U_q(\F)$ is quantum $\gl_{m|n}$ or the modified quantum supergroup
of $\osp_{2\ell +1|2n}$, Theorems \ref{thm:main2-gl} and \ref{thm:main2-osp} state  that
the braided tensor functor $\cF_{q, \F}$ is full. This statement is the first fundamental theorem
(FFT) of invariant theory
for these quantum supergroups in the categorical formulation of \cite{LZ12, LZ14a, LZ14b}.
It implies that the endomorphism algebra $\End_{\U_q(\F)}(V_q^{\otimes r})$
is the representation of the braid group on $r$-strings generated by the $R$-matrix of the natural $\U_q(\F)$-module.
As in the case of ordinary quantum groups, this representation factors through
the Hecke algebra if $\U_q(\F)$ is quantum $\gl_{m|n}$ (see Proposition \ref{prop:Hecke}),
and the Birman-Wenzl-Murakami algebra
with appropriate parameters if $\U_q(\F)$ is quantum $\osp_{2\ell+1|2n}$
(see Proposition \ref{prop:BMW}).

For the quantum supergroup of $\osp_{2\ell|2n}$,
we show in Proposition \ref{prop:main-even-osp} that the space
$\Hom_{\U_q(\F)}(V_q^{\otimes r}, V_q^{\otimes s})$ is
spanned by images of ribbon graphs under $\cF_{q, \F}$ if $r+s< 2\ell(2n+1)$.
This implies that for $r<\ell(2n+1)$, $\End_{\U_q(\F)}(V_q^{\otimes r})$
is the image  of the Birman-Wenzl-Murakami algebra of degree $r$
in the representation generated by the $R$-matrix of the natural module
(see Theorem \ref{prop:BMW}).

Our study is based on the deformation theoretical treatment of quantum supergroups  \cite{G06, G07, SZ98, Z02} 
and makes essential use of the Etingof-Kazhdan quantisation \cite{EK-I, EK-II}.
It was shown in \cite{Z02} (also see \cite{G06, G07}) that
the universal enveloping superalgebra $\U(\fg; T)$ over the power series ring $T=\C[[t]]$ can be endowed with the structure of 
a braided quasi Hopf superalgebra, where the universal $R$-matrix was constructed from  the quadratic Casimir operator and 
the associator was obtained by using solutions of a KZ equation following the strategy of Drinfeld \cite{D90}.
The Drinfeld-Jimbo quantum supergroup $\U_q(\fg; T)$
\cite{BGZ, Y91, ZGB91b} over $T$ is the Etingof-Kazhdan quantisation of $\U(\fg; T)$ \cite{G06, G07}. 
It follows from a general property of the Etingof-Kazhdan quantisation that $\U_q(\fg; T)$ and $\U(\fg; T)$ 
are equivalent (cf. Definition \ref{def:equiv-ribbon-quasi-Hopf}) as braided quasi Hopf superalgebras.

This result enables one to bring the invariant theory for classical supergroups developed in \cite{DLZ, LZ14a, LZ14b, LZ15}
into the quantum supergroup setting.  One translates the first fundamental theorems (FFTs)
for classical supergroups \cite{DLZ, LZ14a}
to (modified) $\U(\fg; T)$.
The equivalence between $\U(\fg; T)$ and $\U_q(\fg; T)$ as braided quasi Hopf superalgebras  in turn enables us to obtain 
an FFT for $\U_q(\fg; T)$. The FFT remains valid when we change scalars from $T$ to its fraction field $T_t$, the formal Laurent series ring.
Thus Theorem \ref{thm:main2-gl}, Theorem \ref{thm:main2-osp} and Proposition \ref{prop:main-even-osp} essentially
follow from the fact that the specialisation of $\U_q(\F)$
to $T_t$ (by $\F\to T_t$, $q\mapsto \exp(t/2)$) is a dense subalgebra of
the Drinfeld-Jimbo quantum supergroup over $T_t$ completed in the $t$-adic topology.

We mention the work \cite{LZZ}, which is closely related to \cite{LZ06},  on invariants of 
quantised coordinate rings of modules over ordinary quantum groups.  A similar investigation 
on invariants of quantised coordinate rings of modules over the quantum supergroups will 
be carried out in a future publication.  
Another aspect of the invariant theory of quantum supergroups 
studies the centers of the quantum supergroups.  In \cite{Z92, ZG},  generators of the centers  
were constructed using the method developed in \cite{GZB91, ZGB91a}.
The same method was employed to construct knot invariants using quantum groups and
quantum supergroups in \cite{LGZ, Z92a, ZGB91a}.

The organisation of the paper is as follows.  Section \ref{sect:quea} gives a
deformation theoretical treatment of quantum supergroups,
where we also explain how to place quantum supergroups in the framework of
Etingof-Kazhdan quantisation \cite{EK-I, EK-II}.
Sections \ref{sect:inv-theory-gl} and \ref{sect:inv-theory-osp}
contain the main results on the invariant theory of the quantum general linear supergroup and
quantum orthosymplectic supergroup respectively, in particular, the FFTs.
The two appendices contain basic facts on braided tensor categories and braided quasi Hopf  superalgebras,
which are used throughout the paper. 


\section{Quantum supergroups}\label{sect:quea}

\subsection{The power series and Laurent series rings}
In some parts of this paper, we will work over the power series ring in the indeterminate $t$
\[
T:=\C[[t]]=\left\{\left.\sum_{i=0}^\infty f_i t^i \right| f_i\in\C\right\}
\]
endowed with the $t$-adic topology. We can regard
$T$ as the inverse limit of an inverse system defined in the following way.
Let $\C[t]$ be the ring of polynomials in $t$, and set $K_n=\C[t]/(t^n)$ for all $n\in\Z_+$.
We have the inverse system $(p_n: K_n)_{n>0}$ with
$p_n$ being the natural projection $p_n: K_n\to K_{n-1}$.
Then $T=\underleftarrow\lim T_n$ equipped with the inverse limit topology.
Given any $T$-module $M$, the quotient modules $M_n=M/(t^n M)$ and natural projections $M_n \to M_{n-1}$
form an inverse system. The $t$-adic completion of $M$ is the inverse limit $\underleftarrow\lim M_n$ equipped
with the inverse limit topology.
Given $T$-modules $M$ and $N$, we define the  topological tensor product $M\hat\otimes_T N$
to be the $t$-adic completion of $M\otimes_T N$.
For a $\C$-vector space $V$, denote by $V[[t]]$ the complex
vector space of formal series
$\sum_{n\ge 0} v_n t^n$, where $v_n\in V$.  We give
$V[[t]]$ the (obvious) $T$-module structure defined for any
$f=\sum_{k\ge 0} f_k t^k\in T$ and $v(t)=\sum_{n\ge 0} v_n t^n\in V[[t]]$ by
$f v(t)= \sum_{m\ge 0}(\sum_{k=0}^m f_k v_{m-k}) t^m$.  Call such $T$-modules
topologically free.
If $V$ and $W$ are $\C$-vector spaces,
$V[[t]]\hat\otimes_T W[[t]]=(V\otimes_\C W)[[t]]$.

The definition of any type of superalgebra can be extended to
the topological setting of $T=\C[[t]]$ by replacing the algebraic tensor product 
by the topological tensor product,
leading to the notion of topological superalgebras of that type.

The Laurent series ring
\[
T_t:=\left\{\left.\sum_{i\in\Z} f_i t^i \right| f_i\in\C,  \  f_j=0 \text{ \ if $j<<0$}\right\}
\]
is the quotient field of $T$.
For any topological $T$-module $M$, we let $M_t:= M\hat\otimes_T T_t$. 
If  $M$ is a topologically free module of finite rank $r$ over $T$, then $M_t$ is an $r$-dimensional vector space over $T_t$.

\smallskip

\noindent
{\bf Notation}.
Throughout the paper,  $\hat\otimes_T$ will simply be written as $\otimes_T$, and superalgebras 
over $T$ are understood to be topological.

\subsection{Quantum universal enveloping superalgebras}\label{sect:uea}
To consider quantum supergroups  \cite{BGZ, Y91, ZGB91b}
from a deformation theoretical point of view \cite{ G06, G07, SZ98, Z02},
we will need the notion of quasi Hopf superalgebras \cite[\S II]{Z02},
which will be briefly discussed in Appendix \ref{append:quasi-Hopf}.

Let $\fg=\fg_{\bar 0}\oplus\fg_{\bar 1}$ be a complex Lie superalgebra \cite{Kac, S} with
even subspace $\fg_{\bar 0}$ and odd subspace $\fg_{\bar 1}$.
Then  $\fg_{\bar 0}$ is a Lie subalgebra called the even subalgebra,
and the odd subspace is a $\fg_{\bar 0}$-module under the adjoint action.
Kac \cite{Kac} called $\fg$ classical if $\fg_{\bar 0}$
is a reductive Lie algebra and $\fg_{\bar 1}$ is
a semi-simple $\fg_{\bar 0}$-module \cite{Kac, S}.
If  in addition $\fg$ admits a non-degenerate invariant bilinear form,
it is called contragredient.

Given a simple contragredient Lie superalgebra $\fg$ over $\C$,
we choose a
Borel subalgebra $\fb\subseteq\fg$ containing a Cartan subalgebra $\fh$.
The bilinear form above is then unique up to multiplication by a non-zero scalar, and 
its restriction to $\fh$ remains non-degenerate.
Using this restriction to identify $\fh$ with $\fh^*$, we obtain a
non-degenerate symmetric bilinear form $(\, , \, )$ on the dual space $\fh^*$ of $\fh$.
Let $\Pi_{\fb}=\{\alpha_i | i=1, 2, ..., r\}$ be
the set of simple roots relative to $\fb$, and let $\Psi$ be the set of all roots.
Denote by $\phi\subset\Pi_\fb$ the subset consisting of all the odd simple roots.
Let $\ell_m^2$ be the minimum of $|(\beta, \beta)|$ over
all non-isotropic $\beta\in\Psi$ if $\fg\ne D(2, 1; \alpha)$. If
$\fg$ is $D(2, 1; \alpha)$, let $\ell_m^2$ be the minimum of all
$|(\beta, \beta)|>0$ ($\beta\in\Psi$), which are independent of
the arbitrary parameter $\alpha$. Let
$d_i=\frac{(\alpha_i, \alpha_i)}{2}$ if $(\alpha_i, \alpha_i)\ne 0$, and
$d_i=\frac{\ell_m^2}{2^\kappa}$ if $(\alpha_i, \alpha_i)= 0$,
where $\kappa=0$ if $\fg$ is of type $B$ and $\kappa=1$ otherwise.
Introduce the matrices
$D=\text{diag}(d_1, \dots, d_r)$ and
$B=(b_{i j})_{i, j=1}^r$ with $b_{i j} = (\alpha_i, \alpha_j)$.
Then the Cartan matrix $A$ associated to the set of simple
roots $\Pi_\fb$ is defined by
$
A=D^{-1} B.
$
Note that $A$ has rank $r$ except when $\fg=A(n, n)$ with $n=\frac{r-1}{2}$
 for odd $r$.
In the latter case, there exists a unique integral row vector $J=(J_1,\dots,J_r)$ of length $r=2n+1$
with $J_1=1$ such that $JA=0$.

Given the Borel subalgebra $\fb$, the Lie superalgebra $\fg$ has the 
following Serre type presentation \cite{Z13}.
The generators are  $e^0_i, f^0_i$ and $h^0_i$ ($1\le i\le r$)
with $e^0_i$ and $f^0_i$ odd when
$\alpha_i\in\phi$ and even otherwise. The relations are: \\
$\bullet$ quadratic relations
\begin{eqnarray*}
\begin{aligned}
&{[}h^0_i, h^0_j] =0, \\
&[h^0_i, e^0_j] =a_{i j} e^0_j, \quad [h^0_i, f^0_j] =-a_{i j} f^0_{j}, \\
&[e^0_i, f^0_j] =\delta_{i j} h^0_i, \quad \forall i, j;
\end{aligned}
\end{eqnarray*}
$\bullet$ standard Serre relations
\begin{eqnarray*}
\begin{aligned}
&[e^0_t, e^0_t]=0, \quad  [f^0_t, f^0_t]=0, \quad \text{for $a_{t t}=0$}\\
&(ad_{e^0_i})^{1-a_{i j}}(e^0_j)=0, \quad (ad_{f^0_i})^{1-a_{i j}}(f^0_j)=0, \\
& \text{for  $i\ne j$ with $a_{i i}\ne 0$ or $a_{i j}=0$};\quad \text{and}
\end{aligned}
\end{eqnarray*}
$\bullet$ higher order Serre relations given in \cite[Theorem 3.3]{Z13}; and \\
$\bullet$ the additional linear relation
$
\sum_{i=1}^r J_i h^0_i =0
$
if $\fg=A(\frac{r-1}{2},\frac{r-1}{2})$ for odd $r$.

\begin{remark}
The general linear Lie superalgebra $\gl_{m|n}$ can be similarly described.
\end{remark}

\begin{remark} In general, given a Cartan subalgebra $\fh$ of a simple Lie superalgebra $\fg$,
there are many
Borel subalgebras $\fb\supseteq\fh$ which are not Weyl group conjugate,
and the Serre type presentations corresponding to
different choices of Borel subalgebras can be very different.
\end{remark}

Now take $\fg$ to be either a simple contragedient Lie superalgebra or $\gl_{m|n}$.
Denote by $\U(\fg; \C)$ the universal enveloping superalgebra of $\fg$ over $\C$,
and let
$\U(\fg; T)=\U(\fg; \C)\otimes_\C T$ be the universal enveloping superalgebra
over the power series ring $T$. Then $\U(\fg; T)$ has
a standard Hopf superalgebra
structure with the co-multiplication $\Delta$, co-unit $\epsilon$ and antipode $S$
respectively given by
\begin{eqnarray}\label{eq:co-prod}
\Delta(X)= X\otimes 1 + 1\otimes X, \quad
\epsilon(X)=0, \quad
S(X)= -X, \quad \forall X\in\g.
\end{eqnarray}
Note the co-commutativity of $\U(\fg; T)$, which we express by writing  $\Delta^{op}=\Delta$.

If $(X_a)$, $(Y_a)$ are dual bases of $\fg$ with respect to the 
invariant form, i.e. $(Y_a,X_b)=\delta_{ab}$ for all $a,b$, define 
$C=\sum_a X_a\ot Y_a$, and let 
$\omega=\sum_a X_aY_a$ be
the quadratic Casimir operator of $\g$; it is
the central element of the universal enveloping superalgebra $\U(\fg; \C)$ which acts with eigenvalue
$(\lambda+2\rho,\ \lambda)$ on any simple $\U(\fg; \C)$-module
with highest weight $\lambda$. The following result is clear.

\begin{lemma} \label{lem:Casimir}\label{eq:Casimir}
Let $C={\frac{1}{2}} \left(\Delta(\omega)- \omega\otimes 1 - 1\otimes \omega\right)$. Then
\[
C\Delta(x) - \Delta(x) C=0, \quad \forall x\in \U(\fg; \C).
\]
 \end{lemma}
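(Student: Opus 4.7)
The statement is essentially a direct consequence of the centrality of the quadratic Casimir $\omega$, and the fact that $\Delta$ is a superalgebra homomorphism. I will verify each of the three summands defining $C$ commutes with $\Delta(x)$ for all $x \in \U(\fg; \C)$, and then conclude by linearity.

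First I would recall that $\omega = \sum_a X_a Y_a$ is an even element of $\U(\fg; \C)$, and is central: this is the standard statement that the quadratic Casimir associated with a non-degenerate invariant bilinear form on $\fg$ lies in the center of $\U(\fg; \C)$ (as noted in the passage just before the lemma, where its eigenvalue on a highest weight module is recorded). Since $\omega$ is both even and central, I then observe that the elements $\omega \otimes 1$ and $1 \otimes \omega$ are central in the graded tensor product $\U(\fg; \C) \otimes \U(\fg; \C)$. Indeed, in the super convention $(a \otimes b)(c \otimes d) = (-1)^{|b||c|} ac \otimes bd$, and since $\omega$ is even, both $(\omega \otimes 1)(c \otimes d) = \omega c \otimes d$ and $(c \otimes d)(\omega \otimes 1) = c\omega \otimes d$ agree for every homogeneous $c, d$. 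The same argument applies to $1 \otimes \omega$. In particular, both terms commute with $\Delta(x)$.

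Next, since $\Delta : \U(\fg; \C) \to \U(\fg; \C) \otimes \U(\fg; \C)$ is a homomorphism of (super)algebras, centrality of $\omega$ in $\U(\fg; \C)$ transfers directly to $\Delta(\omega)$ commuting with $\Delta(x)$ for every $x \in \U(\fg; \C)$:
\[
\Delta(\omega)\Delta(x) = \Delta(\omega x) = \Delta(x \omega) = \Delta(x)\Delta(\omega).
\]
Combining the three facts and multiplying by $\tfrac{1}{2}$, every summand in $C = \tfrac{1}{2}(\Delta(\omega) - \omega\otimes 1 - 1 \otimes \omega)$ commutes with $\Delta(x)$, so $C\Delta(x) - \Delta(x)C = 0$.

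There is no real obstacle here; the only point requiring attention is the bookkeeping of Koszul signs in the super tensor product, and this is trivialized by the fact that $\omega$ is even. I would also include a brief remark reconciling the two expressions for $C$ appearing in the excerpt: expanding $\Delta(\omega) = \sum_a \Delta(X_a)\Delta(Y_a)$ in $\U(\fg;\C) \otimes \U(\fg;\C)$ and collecting the cross terms yields $\omega \otimes 1 + 1 \otimes \omega + \sum_a(X_a \otimes Y_a + (-1)^{|X_a|} Y_a \otimes X_a)$, which, by the (super)symmetry of the invariant form used to choose the dual bases $(X_a)$ and $(Y_a)$, equals $\omega \otimes 1 + 1 \otimes \omega + 2\sum_a X_a \otimes Y_a$. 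Thus $C = \sum_a X_a \otimes Y_a$, consistent with the earlier definition.
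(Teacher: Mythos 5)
Your proof is correct, and it spells out exactly the standard argument that the paper leaves implicit (the paper merely remarks that the result is clear): $\omega$ central and even implies that each of $\Delta(\omega)$, $\omega\otimes 1$, $1\otimes\omega$ commutes with $\Delta(x)$, and linearity finishes. The closing remark reconciling $C=\tfrac12(\Delta(\omega)-\omega\otimes 1-1\otimes\omega)$ with $C=\sum_a X_a\otimes Y_a$ is a worthwhile sanity check and matches the paper's subsequent usage.
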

As above, write $C=\sum_{a=1}^{\dim\fg} X_a\otimes Y_a$ with $(Y_a), (X_a)\subset\fg$ a pair of dual bases,
and introduce the following elements of $\U(\fg; \C)\otimes\U(\fg; \C)\otimes\U(\fg; \C)$:
\[
C_{12}=C\otimes 1, \quad C_{23}=1\otimes C, \quad C_{13}=\sum_a X_a\otimes 1\otimes Y_a.
\]
We will still loosely refer to $C$ and the $C_{i j}$ as quadratic Casimir operators.
\begin{lemma} \label{lem:four-term}
The quadratic Casimir operators satisfy the following relations
\begin{eqnarray}
\begin{aligned}
&(\id\otimes \Delta) C= C_{1 2} + C_{1 3}, \quad
(\Delta\otimes \id) C = C_{1 3} +C_{2 3},
\end{aligned}\\
\begin{aligned}
&{[C_{1 2}, \ C_{1 3} +C_{2 3}]}=0, \quad [C_{1 2}+C_{1 3}, C_{2 3}]=0.\phantom{XXX}
\end{aligned}\label{eq:4-term}
\end{eqnarray}
\end{lemma}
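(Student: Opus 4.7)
The plan is to handle the two coproduct identities and the two commutator identities separately, deriving everything from Lemma \ref{lem:Casimir} and the fact that the dual bases $(X_a),(Y_a)\subset\fg$ are primitive in $\U(\fg;\C)$.

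First I would verify the two coproduct identities by a direct calculation. Since each $X_a,Y_a\in\fg$ is primitive, $\Delta(X_a)=X_a\ot 1+1\ot X_a$ and similarly for $Y_a$. Applying $\id\ot\Delta$ to $C=\sum_a X_a\ot Y_a$ gives
\[
(\id\ot\Delta)C=\sum_a X_a\ot Y_a\ot 1+\sum_a X_a\ot 1\ot Y_a=C_{12}+C_{13},
\]
and the other identity follows by the symmetric computation. These are purely definitional.

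For the commutator relations, the key is to rewrite them using the coproduct identities just established and then invoke Lemma \ref{lem:Casimir}. For instance, using $(\Delta\ot\id)C=C_{13}+C_{23}=\sum_a\Delta(X_a)\ot Y_a$,
\[
[C_{12},\,C_{13}+C_{23}]=\bigl[C\ot 1,\;\sum_a \Delta(X_a)\ot Y_a\bigr].
\]
Because $C\ot 1$ is the identity on the third tensor factor, this reduces (after checking super-signs, see below) to $\sum_a[C,\Delta(X_a)]\ot Y_a$, which vanishes term-by-term by Lemma \ref{lem:Casimir}. The identity $[C_{12}+C_{13},C_{23}]=0$ is handled by the mirror-image computation, writing $C_{12}+C_{13}=(\id\ot\Delta)C=\sum_a X_a\ot\Delta(Y_a)$ and commuting with $1\ot C$ to get $\sum_a X_a\ot[\Delta(Y_a),C]=0$.

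The only place I expect any care to be needed is in tracking $\Z_2$-degrees. Because $X_a$ and $Y_a$ are dual with respect to a homogeneous invariant form, they have the same parity, so each summand $X_a\ot Y_a$ is even; hence $C$ itself is even. Consequently the super-tensor-product signs of the form $(-1)^{|C||\cdot|}$ and $(-1)^{|1||\cdot|}$ that appear when multiplying elements like $(C\ot 1)(\Delta(X_a)\ot Y_a)$ are all trivial, and the supercommutator $[C,\Delta(x)]$ coincides with the ordinary commutator appearing in Lemma \ref{lem:Casimir}. Once this is noted, the reduction in the previous paragraph is unambiguous and the four-term relations follow. This parity bookkeeping is the only potential obstacle, but it is handled uniformly by the observation that $C$ is even.
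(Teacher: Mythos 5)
Your proof is correct and complete. Note that the paper states this lemma without proof (and cites only Cartier's work for the four-term relations in context), so there is no proof of record to compare against; your argument — establishing the coproduct identities by primitivity of $\fg\subset\U(\fg;\C)$, then converting the commutator identities into $\sum_a[C,\Delta(X_a)]\otimes Y_a$ and $\sum_a X_a\otimes[\Delta(Y_a),C]$ and invoking Lemma \ref{lem:Casimir} — is precisely the standard derivation, and your observation that $C$ is even (because the invariant form is even, so $X_a$ and $Y_a$ share a parity) is exactly what is needed to make the super-sign bookkeeping trivial and to identify the supercommutator with the plain commutator of Lemma \ref{lem:Casimir}.
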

The relations \eqref{eq:4-term} are known as the `four-term relations';
they are essential for
the construction of the associator to be described below.

Now consider the following power series (see \cite[(3.3)]{Z02})
\begin{eqnarray}\label{eq:R}
R:=\exp( t C/2)=\sum_{n=0}^\infty \frac{(tC/2)^n}{n!}.
\end{eqnarray}
As $\U(\fg; T)$ is co-commutative, it is immediate from Lemma \ref{eq:Casimir}
that
\[R \Delta(x) = \Delta^{op}(x) R, \quad \forall x\in\U(\fg; T).\]
Our aim is to interpret $R$ as a  universal $R$-matrix for $\U(\fg; T)$. This
requires us to work in the setting of braided quasi Hopf superalgebras (cf. Section \ref{sect:bqHsa}).

As shown in \cite{Z02}, Cartier's theory \cite{C} of
infinitesimal symmetric categories applies to $\U(\fg; T)$.
This follows from Lemma \ref{lem:four-term}, that is,  the quadratic Casimir operators
$C_{i j}$ satisfy the four-term relations \eqref{eq:4-term}.
Thus Drinfeld's construction \cite{D92} of associators for enveloping algebras of semi-simple Lie algebras
can be directly generalized to the present context to construct
the desired associator $\Phi=\Phi_{KZ}\in \U(\fg; T)^{\otimes 3}$.

To explain the construction (see \cite[\S III.B]{Z02}),
we consider the following differential
equation on $\C\backslash\{0, 1\}$
\begin{eqnarray}\label{eq:KZ}
{\frac{d G(z)}{d z}}&=& {\frac{t}{2\pi i}}\left( { \frac{C_{1 2}}{z}}
+ { \frac{C_{2 3}}{z-1}}\right) G(z)
\end{eqnarray}
for the analytic function $G: \C\backslash\{0, 1\}\rightarrow
\U(\fg; T)^{\otimes 3}$. This is a special case of the celebrated
Knizhnik-Zamolodchikov equations, which first arose in the
context of Wess-Zumino-Witten models of two dimensional conformal
quantum field theory.  The classical theory of Fuchsian differential
equations guarantees the existence and uniqueness of solutions
$G_0$ and $G_1$ of equation \eqref{eq:KZ} with the asymptotes
\begin{eqnarray*}
\begin{aligned}
&G_0(z)\to z^{ \frac{t C_{1 2}}{2\pi i}},  \quad z\to 0; \quad
&G_1(z)\to (1-z)^{ \frac{t C_{ 2 3}}{2\pi i}}, \quad z\to 1.
\end{aligned}
\end{eqnarray*}
Furthermore, the two solutions can only differ by a $z$-independent
factor
\begin{eqnarray}\label{eq:Phi}
\Phi_{KZ}&:=& (G_0(z))^{-1} G_1(z). \label{KZ}
\end{eqnarray}
The factor $\Phi_{KZ}$ can be expressed as a power series in $t$ with coefficients
which are linear combinations of Lie words (that is, repeated commutators)
in $C_{12}$ and $C_{2 3}$,
whose coefficients involve Chen's iterated integrals.

Now $\Phi_{KZ}$ in \eqref{eq:Phi} yields the desired associator for $\U(\fg; T)$
as it satisfies the defining
relations \eqref{eq:associator} and \eqref{eq:braid}.
Furthermore, given  a simple contragredient
Lie superalgebra, $\Phi_{KZ}$ is the unique associator for the co-multiplication \eqref{eq:co-prod}
and universal $R$-matrix \eqref{eq:R}.

We note next that $\U(\fg; T)$ has the structure of a braided quasi Hopf superalgebra \cite{Z02}.
Its antipode triple (see \eqref{eq:antipode}) $(S, \alpha, \beta)$ can be taken to be
\begin{eqnarray}\label{eq:beta}
\alpha=1, \quad \beta^{-1}= m (m\otimes\id) (\id\otimes S\otimes\id)\Phi_{KZ},
\end{eqnarray}
where $m$ denotes multiplication in $\U(\fg; T)$.

Let $u\in \U(\fg; T)$ be defined by equation \eqref{eq:u}. Then $uS(u)=1+o(t)$,
and thus there exists
$v=1+o(t)$ in $\U(\fg; T)$ such that $v^2=uS(u)$. Hence
\begin{eqnarray}\label{eq:U-T}
(\U(\fg; T), \Delta, \epsilon, \Phi_{KZ}, S, \alpha, \beta, R, v)
\end{eqnarray}
is a ribbon quasi Hopf superalgebra, which will be called the {\em quantum enveloping superalgebra}
of $\fg$.

Denote by $\U(\fg; T)\text{-Mod}$ the category of $\U(\fg; T)$-modules which are
topologically free over $T$, and by $\U(\fg; T)\text{-mod}$
the full subcategory with objects which are free $T$-modules of finite rank.

\begin{remark}\label{rem:non-simple}
The above construction of the ribbon quasi Hopf superalgebra  works for any finite dimensional
Lie superalgebra $\fg$ with a quadratic element $C\in\fg\otimes\fg$ satisfying Lemma \ref{eq:Casimir}
and Lemma \ref{lem:four-term}.
\end{remark}

\begin{remark}
The ribbon quasi Hopf superalgebras \eqref{eq:U-T} were introduced
and applied in \cite{Z02} to construct a class of Vassiliev invariants of knots
\cite{BL, Ko}. These invariants are the coefficients in the power series expansions in $t$ 
of the quantum supergroup invariants of knots constructed in \cite{LGZ, Z92}.
They, as well as related $3$-manifold invariants \cite{Z94, Z95}
have recently been investigated in \cite{MW} from
a quantum field theoretical point of view \cite{W}.
\end{remark}

\subsection{Quantum supergroups}\label{sect:DJ-alg}

Given any simple contragredient Lie superalgebra $\fg$ of rank $r$, we make
a choice $\fg\supset\fb\supset\fh$ of Borel and Cartan subalgebras. Let  $(\Pi_\fb, \phi, A)$
denote the corresponding root datum, associated to which are the Cartan matrix
$A=(a_{i j})$ and diagonal matrix $D=\text{diag}(d_1, \dots, d_r)$ defined in Section \ref{sect:uea}.
Let
$
q=\exp(t/2)$ and $q_i=q^{d_i},
$
then $q_i^{a_{i j}} = q_j^{a_{j i}}$ for all $i, j=1, 2, \dots, r$.

A Drinfeld-Jimbo quantum supergroup $\U_q(\fg, \phi; T)$
was introduced in \cite{BGZ, Y91, ZGB91b};
it is a deformation of the universal enveloping superalgebra
of $\fg$ as a Hopf superalgebra.
The quantum supergroup
$\U_q(\fg, \phi; T)$
is generated as an associative superalgebra by the homogeneous
generators $e_i, f_i, h_i$  $(i=1, 2, ..., r)$,
where $e_i$ and $f_i$ are odd for $\alpha_i\in\phi$ and all the other generators are even, subject to the
following relations:
\begin{eqnarray}\label{eq:q-group}
\begin{aligned}
&k_i k_i^{-1} = k_i^{-1}k_i =1, \quad
    k_i k_j= k_j k_i, \\
&k_i e_j k_i^{-1} = q_i^{a_{i j}} e_j, \quad
    k_i f_j k_i^{-1} = q_i^{-a_{i j}} f_j,\\
&e_if_j-(-1)^{[e_i][f_j]}  f_j e_i=\delta_{i j} \frac{k_i - k_i^{-1}}{q_i - q_i^{-1}}, \\
&(e_s)^2=0, \quad (f_s)^2=0, \quad \text{if\ } a_{s s}=0,\\
& Ad_{e_i}^{1-a_{i j}} (e_j)=0, \quad  Ad_{f_i}^{1-a_{i j}} (f_j)=0, \\
&\text{for  $i\ne j$ with $a_{i i}\ne 0$ or $a_{i j}=0$},\\
&\text{higher order quantum Serre relations, and }\\
&\text{$\sum_{i=1}^r J_i h_i=0$ if $\fg=A(\frac{r-1}{2},\frac{r-1}{2})$ for odd $r$},
\end{aligned}
\end{eqnarray}
where
$
k_i:=\sum_{p=0}^\infty \frac{(t d_i h_i/2)^p}{p!},
$
and the quantum adjoint operations are defined by
\begin{eqnarray*}
Ad_{e_i}(x)&=& e_i x - (-1)^{[e_i][x]} q^{h_i} x q^{-h_i} e_i, \\
Ad_{f_i}(x)&=& f_i x - (-1)^{[f_i][x]} q^{-h_i} x q^{h_i} f_i.
\end{eqnarray*}

For the distinguished root data \cite[Appendix A.2.1]{Z13},
higher order Serre relations appear if the Dynkin diagram contains a sub-diagram of the following types:
\begin{enumerate}
\item
\begin{picture}(85, 15)(0, 7)
\put(10, 10){\circle{10}} \put(0, -2){\tiny $s-1$} \put(15,
10){\line(1, 0){20}} { \color{gray} \put(35, 10){\circle*{10}} }
\put(30, -2){\tiny $s$} \put(40, 10){\line(1, 0){20}} \put(65,
10){\circle{10}} \put(55, -2){\tiny $s+1$} \put(72, 7){,}
\end{picture}
the higher order quantum Serre relations are
\vspace{2mm}
\begin{eqnarray*}
\begin{aligned}
e_s E_{s-1; s; s+1} +  E_{s-1; s; s+1}e_s=0,\quad
f_s F_{s-1; s; s+1} +  F_{s-1; s; s+1}f_s=0;
\end{aligned}
\end{eqnarray*}

\item \begin{picture}(85, 15)(0, 7)
\put(10, 10){\circle{10}}
\put(0, -2){\tiny $s-1$}
\put(15, 10){\line(1, 0){20}}
{\color{gray} \put(35, 10){\circle*{10}} }
\put(30, -2){\tiny $s$}
\put(40, 11){\line(1, 0){20}}
\put(40, 9){\line(1, 0){20}}
\put(50, 7){$>$}
\put(65, 10){\circle{10}}
\put(58, -2){\tiny $s+1$}
\put(72, 7){,}
\end{picture}
the higher order quantum Serre relations are
\vspace{2mm}
\begin{eqnarray*}
\begin{aligned}
e_s E_{s-1; s; s+1} +  E_{s-1; s; s+1}e_s=0,\quad
f_s F_{s-1; s; s+1} +  F_{s-1; s; s+1}f_s=0;
\end{aligned}
\end{eqnarray*}

\item \begin{picture}(100, 25)(0, 7)
\put(10, 10){\circle{10}}
\put(0, -2){\tiny $s-1$}
\put(15, 10){\line(1, 0){20}}
{ \color{gray} \put(35, 10){\circle*{10}} }
\put(30, -2){\tiny $s$}
\put(40, 10){\line(2, 1){20}}
\put(40, 10){\line(2, -1){20}}
\put(65, 20){\circle{10}}
\put(72, 18){\tiny
$s+1$} \put(65, 0){\circle{10}}
\put(72, -3){\tiny $s+2$}
\put(90, 7){,}
\end{picture}
the higher order quantum Serre relations are
\vspace{4mm}
\begin{eqnarray*}
\begin{aligned}
e_s E_{s-1; s; s+1} +  E_{s-1; s; s+1}e_s=0,\quad
f_s F_{s-1; s; s+1} +  F_{s-1; s; s+1}f_s=0,\\
e_s E_{s-1; s; s+2} +  E_{s-1; s; s+2}e_s=0, \quad
f_s F_{s-1; s; s+2} +  F_{s-1; s; s+2}f_s=0;
\end{aligned}
\end{eqnarray*}
where
\[
\begin{aligned}
E_{s-1; s; j} =& e_{s-1}(e_s e_{j}-q_{j}^{a_{j s}} e_{j} e_s)-
q_{s-1}^{a_{s-1, s}}(e_s e_{j}-q_{j}^{a_{j s}} e_{j} e_s) e_{s-1},\\
F_{s-1; s; j} =& f_{s-1}(f_s f_{j}-q_{j}^{a_{j s}} f_{j} f_s)-
q_{s-1}^{a_{s-1, s}}(f_s f_{j}-q_{j}^{a_{j s}} f_{j} f_s) f_{s-1}.
\end{aligned}
\]
\end{enumerate}
For the other root data of $\fg$,  the higher order quantum Serre relations vary
considerably with the choice of the root datum, thus we will not spell them out explicitly here.

It is known that $\U_q(\fg, \phi; T)$ has the structure of a Hopf superalgebra,
with co-associative co-multiplication $\Delta_q$, co-unit $\epsilon_q$
and antipode $S_q$ respectively given by
\begin{eqnarray*}
\begin{aligned}
&\Delta_q(h_i)=h_i\otimes 1 + 1\otimes h_i, \quad  \Delta_q(e_i)=e_i\otimes q^{h_i} + 1\otimes e_i,\\
& \Delta_q(f_i)=f_i\otimes 1 + q^{-h_i}\otimes f_i; \\
&  \epsilon_q(h_i)=\epsilon_q(e_i)=\epsilon_q(f_i)=0; \\
& S_q(h_i)=-h_i, \quad S_q(e_i)= -e_i q^{-h_i},\quad S_q(f_i)= - q^{h_i}f_i.
\end{aligned}
\end{eqnarray*}

Let $2\rho$ be  the sum of the even positive roots minus
the sum of the odd positive roots of $\g$.
Let $2 h_\rho$ denote the linear combination of
the $h_i$ such that $[h_\rho, e_i] =(\rho, \alpha_i) e_i$ for
all $i$. Set $K = q^{2 h_\rho}.$
Then $S_q^2(x)= K x K^{-1}$ for all $x\in\U_q(\fg, \phi; T)$.
We shall regard $\U_q(\fg, \phi; T)$ as a quasi Hopf superalgebra with
$\Phi=1\otimes 1\otimes 1 $ and
$ \alpha=\beta=1.$

The most important property
of $\U_q(\fg, \phi; T)$ is its braiding, that is, the existence of a
universal R-matrix $R_q$ \cite{GZB, KT, Y91}, which satisfies the relations
\eqref{eq:braid} with $\Phi=1\otimes 1\otimes 1$.  The explicit form of $R_q$
is in principle known.  One has
$R_q=1\otimes 1 + \frac{t}{2} r + o(t^2)$,
 where $r$ is the classical $r$-matrix \cite{ZGB91b}.
Thus  $\U_q(\fg, \phi; T)$ has the structure of a quasi triangular Hopf superalgebra.

We denote by $\U_q(\fb; T)$ the Hopf subalgebra of $\U_q(\fg, \phi; T)$
generated by the elements $e_i, h_i$ for all $i$,
and by $\U_q(\fh; T)$ the Hopf subalgebra of $\U_q(\fb; T)$ generated by the elements $ h_i$.
Then $\U_q(\fg, \phi; T)$ is the quantum double of  $\U_q(\fb; T)$ \cite{GZB, G06}.

Let $u_q\in \U_q(\fg, \phi; T)$ be the element defined by equation \eqref{eq:u}.
Then $u_qS(u_q) = 1 + o(t)$, hence
there exists an element $v_q=1+o(t)$ such that $v^2=uS(u)$.
Thus
\begin{eqnarray}\label{eq:Uqg}
(\U_q(\fg, \phi; T), \Delta_q, \epsilon_q, S_q, R_q, v_q)
\end{eqnarray}
is a ribbon Hopf superalgebra, which is the quantum supergroup of $\fg$
in the root datum.

\begin{remark}
One may similarly define the quantum general linear supergroup \cite{Z93}. Details will be given in Section \ref{sect:q-GL}.
A different construction  of $\U_q(\gl_{m|n})$ as the commutant of a Hecke algebra super action on $V^{\ot r}$ may be found in \cite{Du14}.
\end{remark}

\subsection{Isomorphism theorem}
\subsubsection{The Etingof-Kazhdan quantisation in a nutshell}\label{sect:E-K-quantisation}

In a series of papers  (the most relevant to us are  \cite{EK-I, EK-II}),
Etingof and Kazhdan  developed a functorial method of quantisation, which turns 
quasi-triangular Lie bialgebras to quasi-triangular Hopf algebras \cite{D86}. 
Their construction also commutes with Drinfeld's double constructions 
for Lie bialgebras and Hopf algebras
\cite{D86}. Their method applies in the category of $\Z_2$-graded vector 
spaces, thus can be used to quantise Lie super bialgebras \cite{GZB}.

Let $\fg$ be either a simple contragredient Lie superalgebra or $\gl_{m|n}$ 
with the quasi-triangular Lie super bialgebra structure described in \cite[Example 8]{GZB}. 
The corresponding classical $r$-matrix $r$ of $\fg$ (see above) has the property that $r+\tau(r)=2C$,
where $\tau$ is the $\Z_2$-graded permutation defined by \eqref{eq:tau}.
Let $\cM_\fg$ be the Drinfeld category of $\fg$,  whose objects are 
$\U(\fg; T)$-modules which are topologically free over $T$,  
and whose morphisms are defined in the following way. 
Corresponding to the objects $V[[t]]$ and $W[[t]]$, 
we have the $\fg$-modules $V$ and $W$ over $\C$. 
Then $\Hom_{\cM_\fg}(V[[t], W[[t]]):=\Hom_\fg(V, W)[[t]]$. 
This makes $\cM_\fg$ into a tensor category with associativity constraint 
given by the associator $\Phi_{KZ}$.

Given a Borel subalgebra $\fb$ of $\fg$, we denote by $\bar\fb$ the 
opposite Borel subalgebra. Then we have the Verma module
$M_+=\U(\fg; \C)\otimes_{\U(\fb; \C)} \C_0$ with highest weight $0$, 
and opposite Verma module $M_-=\U(\fg; \C)\otimes_{\U(\bar\fb; \C)} \C_0$
with lowest weight $0$. Let $\cF$ be the functor from $\cM_\fg$ to 
the category of $\cV$ topologically free $T$-modules (with continuous morphisms) 
defined by $\cF(W[[t]])=\Hom_\fg(M_+\otimes M_-, W)[[t]]$. There exist a family of isomorphisms
$J_{V_1, V_2}: \cF(V_1)\otimes \cF(V_2)\longrightarrow \cF(V_1\otimes V_2)$
natural in $V_1$ and $V_2$, making $\cF$ into a tensor functor 
\cite[Proposition 2.2]{EK-I} (in the notation of Appendix \ref{sect:tensor}, 
$\phi_2$  is $J$,  and $\phi_0$ is the obvious map).   Since the 
modules $M_\pm$ are defined with respect to $\fb$ and $\bar\fb$ respectively,  
$\cF$ depends on the choice of the Borel subalgebra.

The following results were all proved in \cite[\S 3]{EK-I}
(see \cite[Propositions 3.6, 3.7]{EK-I} in particular).
The algebra of endomorphisms $\End(\cF)$ of the tensor functor $\cF$ 
has a natural braided quasi Hopf superalgebra structure, and is equivalent to
$\U(\fg; T)$ as given by \eqref{eq:U-T}.
One can turn $\End(\cF)$ into a Hopf superalgebra $\U_t^{EK}(\fg, \phi; T)$
by a gauge transformation. This Hopf superalgebra admits a quasi triangular structure, 
which in the semi-classical limit reduces to the quasi  
triangular Lie bi superalgebraic structure of $\fg$ discussed earlier.  
The quasi triangular Hopf superalgebra $\U_t^{EK}(\fg, \phi; T)$ 
is the Etingof-Kazhdan quantisation of the quasi triangular Lie super bialgebra $\fg$.

It was proved by Geer \cite[Theorem 1.1]{G07} that for any simple contragredient
 Lie superalgebra $\fg$,  the Etingof-Kazhdan quantum superalgebra
$\U_t^{EK}(\fg, \phi; T)$ is equal to
the Drinfeld-Jimbo quantum supergroup $\U_q(\fg, \phi; T)$ as quasi triangular
 Hopf superalgebra.  
Geer's proof was based on the fact that the Etingof-Kazhdan quantisation commutes 
with Drinfeld's double constructions. It generalises to $\gl_{m|n}$,
as we will see in Section \ref{sect:proof-main-gl}.

\subsubsection{Isomorphism theorem over power series ring}
We have the following result.
\begin{theorem}\label{thm:iso}\label{eq:f-F-g}
Let $\fg$ be either a finite dimensional simple Lie superalgebra or $\gl_{m|n}$.
Let $(\U_q(\fg, \phi; T), \Delta_q, \epsilon_q, S_q, R_q, v_q)$
be the quantum supergroup described in Section \ref{sect:DJ-alg}, which can be
regarded as a ribbon quasi Hopf superalgebra with a trivial associator.
Let $(\U(\fg; T), \Delta, \epsilon, \Phi_{KZ}, S, \alpha, \beta, R, v)$ be the
ribbon quasi Hopf superalgebra constructed in Section \ref{sect:uea}.
Then the two ribbon quasi Hopf superalgebras  are equivalent
(cf. Definition \ref{def:equiv-ribbon-quasi-Hopf}) for any root datum of $\fg$.
\end{theorem}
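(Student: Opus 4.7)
The plan is to deduce this from the Etingof--Kazhdan quantisation theory combined with Geer's identification of the EK quantisation with the Drinfeld--Jimbo quantum supergroup, then to upgrade the resulting equivalence of braided quasi Hopf superalgebras to one of ribbon quasi Hopf superalgebras. The EK construction (as summarised in Section \ref{sect:E-K-quantisation}) takes the braided quasi Hopf superalgebra $(\U(\fg; T), \Delta, \epsilon, \Phi_{KZ}, S, \alpha, \beta, R)$ and, via the tensor functor $\cF$ and the family of isomorphisms $J_{V_1, V_2}$, produces an endomorphism superalgebra $\End(\cF)$ which is equivalent to $\U(\fg; T)$ as a braided quasi Hopf superalgebra; a gauge transformation (twist) then yields the Hopf superalgebra $\U_t^{EK}(\fg, \phi; T)$ with trivial associator. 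By construction, the twist implements an equivalence of braided quasi Hopf superalgebras between $\U(\fg; T)$ and $\U_t^{EK}(\fg, \phi; T)$ in the sense of Definition \ref{def:equiv-ribbon-quasi-Hopf}.

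The next step is to identify $\U_t^{EK}(\fg, \phi; T)$ with the Drinfeld--Jimbo quantum supergroup $\U_q(\fg, \phi; T)$. For simple contragredient $\fg$ this is exactly Geer's theorem \cite[Theorem 1.1]{G07}, which exploits the fact that the EK quantisation commutes with Drinfeld's double construction and that $\U_q(\fg, \phi; T)$ is a quantum double of $\U_q(\fb; T)$. For $\fg = \gl_{m|n}$ one has to reprove this compatibility; I would follow Geer's strategy step by step, observing that Remark \ref{rem:non-simple} already guarantees the ribbon quasi Hopf structure on $\U(\gl_{m|n}; T)$, that $\gl_{m|n}$ carries a quasi-triangular Lie super bialgebra structure as in \cite[Example 8]{GZB}, and that $\U_q(\gl_{m|n}; T)$ is again the quantum double of its Borel part. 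This will be carried out in Section \ref{sect:proof-main-gl} as promised in the excerpt, and should pose no essential new difficulty beyond translating Geer's arguments to the $\gl$-setting.

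Combining these two inputs yields an equivalence of quasi triangular quasi Hopf superalgebras between $\U(\fg; T)$ and $\U_q(\fg, \phi; T)$ for every root datum $(\Pi_\fb, \phi, A)$ of $\fg$. It then remains to match the ribbon elements. On the $\U(\fg; T)$ side the ribbon element $v$ satisfies $v^2 = u S(u)$ with $u$ given by \eqref{eq:u}, and on the $\U_q(\fg, \phi; T)$ side $v_q$ satisfies $v_q^2 = u_q S_q(u_q)$. Since the gauge equivalence sends $(R, \Phi_{KZ}, \alpha, \beta)$ to $(R_q, 1, 1, 1)$, a direct computation from formula \eqref{eq:u} (see Appendix \ref{append:quasi-Hopf}) shows that $u$ is mapped to a square root correction of $u_q$; uniqueness of $v, v_q$ up to sign in $1 + t\,\U(\fg; T)$ (respectively $1 + t\,\U_q(\fg, \phi; T)$) then forces the images to agree.

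The main obstacle will be the last paragraph: reconciling the ribbon elements $v$ and $v_q$. The quasi-Hopf-level equivalence is essentially built into the EK formalism, but the ribbon structure is an additional datum, and the explicit formula \eqref{eq:beta} for $\beta$ involves $\Phi_{KZ}$ non-trivially, so tracking $u$ through the gauge transformation requires care. A secondary issue is the $A(n,n)$-case, where the additional linear relation $\sum J_i h_i = 0$ means the Cartan matrix has a kernel; one must check that the EK/Geer construction descends to the appropriate quotient so that both sides of the equivalence correctly encode this relation.
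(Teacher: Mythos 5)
Your proposal takes essentially the same route as the paper: use the Etingof--Kazhdan machinery together with Geer's theorem \cite[Theorem 1.1]{G07} for simple contragredient $\fg$, and then redo the Drinfeld-double argument for $\gl_{m|n}$ in Section \ref{sect:proof-main-gl} by quantising the Borel subbialgebra, passing to its double, and identifying the appropriate quotient with $\U_q(\gl_{m|n},\phi;T)$ (followed by the change of coproduct via the automorphism $\eta$). That is exactly what the paper does.

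The one place where you overstate the difficulty is the matching of ribbon elements. You treat $v \mapsto v_q$ as something requiring a delicate "square-root correction" computation, but in the paper this is already packaged into Remark \ref{rem:inv-ribbon-transform}: the ribbon element is unaffected by gauge and antipodal transformations, and the condition $v^{(B)} = f(v^{(H)})$ is built into Definition \ref{def:equiv-ribbon-quasi-Hopf}. Once you know $\U(\fg;T)$ and $\U_t^{EK}(\fg,\phi;T)$ are equivalent as braided quasi Hopf superalgebras via a gauge transformation and an antipodal transformation, the ribbon elements automatically correspond under $f$, since both $v$ and $v_q$ are the unique elements of the form $1 + o(t)$ squaring to $uS(u)$ (resp.\ $u_q S_q(u_q)$). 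So the "main obstacle" you identify reduces to citing that invariance, not to a fresh computation within the proof of Theorem \ref{thm:iso}. Your concern about $A(n,n)$ is reasonable; the paper handles this by passing to the quotient by $\langle I\rangle$ on both sides (classical and quantum), which is compatible with the double construction. Otherwise your plan is a faithful rendering of the paper's argument.
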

\begin{proof}
If $\fg$ is a simple Lie superalgebra, this follows from the discussion in Section \ref{sect:E-K-quantisation} above. 
The case of $\gl_{m|n}$ will be proved in
Section \ref{sect:proof-main-gl}.
\end{proof}
We denote the equivalence of Theorem \ref{thm:iso} by
\begin{eqnarray*}
(f, F, g): (\U_q(\fg, \phi; T), \Delta_q, \epsilon_q, S_q, R_q, v_q)
\longrightarrow (\U(\fg; T), \Delta, \epsilon, \Phi_{KZ}, S, \alpha, \beta, R, v),
\end{eqnarray*}
where $f: \U_q(\fg, \phi; T) \longrightarrow \U(\fg; T)$ is the superalgebra isomorphism,
$F$ the gauge transformation on $\U(\fg; T)$ and $g$ the antipode transformation.

\begin{remark}\label{rem:Cartan-deform}
One has $f(h_i)=h_i^0$ for all $i$ (see \cite[\S41, \S4.2]{G06}) and hence
the algebra isomorphism $\U_q(\fh; T)\stackrel{\sim}\longrightarrow \U(\fh; T)$.
Furthermore, $[\Delta(h), F]=0$ for all $h\in \fh$.
\end{remark}

\begin{remark}
For $\fg=\mathfrak{sl}(m|n)$, $\mathfrak{osp}(1|2n)$ or
$\mathfrak{osp}(2|2n)$,  it was shown in \cite{SZ98, SZ07} that the universal enveloping superalgebra $\U(\fg; \C)$ is rigid as associative algebra,
that is, it admits no nontrivial deformation. However,
this is not true in  general, e.g., $\U(\mathfrak{osp}(4|2); \C)$ is not rigid \cite{Z92c}.
\end{remark}

Let $\U_q(\fg, \phi; T)\text{-mod}$ (resp. $\U(\fg; T)\text{-mod}$) be the category of
$\U_q(\fg, \phi; T)$-modules (resp. $\U(\fg; T)$-modules) which are topologically $T$-free modules of finite ranks.  By Theorem \ref{thm:key},
both $\U_q(\fg, \phi; T)\text{-mod}$ and $\U(\fg; T)\text{-mod}$ are ribbon categories.
\begin{corollary}\label{cor:equiv-mod-cats}
There exists a braided tensor equivalence between $\U_q(\fg, \phi; T)\text{-mod}$
and $\U(\fg; T)\text{-mod}$, which preserves
duality and twist.
\end{corollary}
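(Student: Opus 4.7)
The plan is to obtain the equivalence of module categories as a formal consequence of the equivalence of ribbon quasi Hopf superalgebras provided by Theorem \ref{thm:iso}, by transporting modules along the algebra isomorphism $f$ and then twisting the resulting tensor structure by the gauge $F$ and antipode transformation $g$.

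Concretely, I would define a functor $\Psi: \U(\fg; T)\text{-mod} \longrightarrow \U_q(\fg, \phi; T)\text{-mod}$ on objects by $\Psi(M) = M$ as a topological $T$-module, with $\U_q(\fg, \phi; T)$-action given by $x\cdot m := f(x) m$ for $x\in\U_q(\fg, \phi; T)$ and $m\in M$; on morphisms $\Psi$ is the identity on the underlying $T$-linear maps. Since $f$ is a superalgebra isomorphism, $\Psi$ is an equivalence of $T$-linear abelian supercategories. The work is to promote $\Psi$ to a ribbon equivalence.

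For the tensor structure, I would define the natural isomorphism
\[
\psi_{M, N}: \Psi(M)\otimes_T \Psi(N) \longrightarrow \Psi(M\otimes_T N), \qquad m\otimes n \longmapsto F\cdot(m\otimes n),
\]
using that $F\in\U(\fg; T)^{\otimes 2}$ is invertible and intertwines the two coproducts, i.e.\ $(f\otimes f)\Delta_q(x) = F^{-1}\Delta(f(x)) F$. The hexagon for associativity then reduces to the cocycle relation
\[
(F\otimes 1)(\Delta\otimes\id)(F) \;=\; \Phi_{KZ}\cdot (1\otimes F)(\id\otimes\Delta)(F),
\]
which is precisely the defining property of the gauge $F$ in the equivalence $(f, F, g)$, so associators on both sides match. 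Braiding preservation reduces to the identity $(f\otimes f)(R_q) = F_{21}^{-1} R F^{-1}$, again part of the data of a gauge equivalence of braided quasi Hopf superalgebras. To preserve duality and twist, I would use $g$ to transport the antipode triple $(S_q, 1, 1)$ to $(S, \alpha, \beta)$ and check that the evaluation/coevaluation morphisms constructed from these triples (as recalled in Appendix \ref{append:quasi-Hopf}) correspond under $\psi$; preservation of the twist $v_q\leftrightarrow v$ is then automatic from the ribbon condition and uniqueness of $v$ up to the relation $v^2 = uS(u)$.

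The conceptually routine but notationally heavy part is the book-keeping for duality, because rigidity in a non-strict ribbon category involves both $\Phi$ and the triple $(S,\alpha,\beta)$; this is where I expect the main technical obstacle to lie, since one has to unwind the definitions in the appendices and verify that the morphism $\psi_{M, M^*}$ composed with $\Psi$ of the evaluation for $\U(\fg; T)$ agrees with the evaluation defined directly for $\U_q(\fg, \phi; T)$ using the trivial associator. However, this compatibility is exactly encoded by the condition that $(f, F, g)$ be an equivalence in the sense of Definition \ref{def:equiv-ribbon-quasi-Hopf}, so the verification is a direct translation once the definitions are in hand, and the quasi-inverse functor $\Psi^{-1}$ is constructed symmetrically using $f^{-1}$ and $F^{-1}$.
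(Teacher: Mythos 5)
Your approach is essentially the paper's. The paper proves this corollary in one line by applying Theorem~\ref{thm:ribbon-cat-equiv} to the equivalence of ribbon quasi Hopf superalgebras given by Theorem~\ref{thm:iso}; your proposal is precisely an unwinding of Theorem~\ref{thm:ribbon-cat-equiv} and its supporting lemmas (Lemma~\ref{lem:gauge-cat}, Lemma~\ref{lem:gauge-duality}, Lemma~\ref{lem:antipode-duality}) in the special case at hand, so it is the same route rather than a genuinely different one. One small caution: with the paper's conventions (Definition~\ref{def:equiv-ribbon-quasi-Hopf} and Lemma~\ref{lem:gauge-cat}), the tensor structure map is given by $F^{-1}$ rather than $F$, and the cocycle identity you wrote should read $(1\otimes F)(\id\otimes\Delta)(F)=\Phi_{KZ}\,(F\otimes 1)(\Delta\otimes\id)(F)$; these are pure convention mismatches and do not affect the substance of the argument.
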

\begin{proof}
Applying Theorem \ref{thm:ribbon-cat-equiv} to $\U_q(\fg, \phi; T)\text{-mod}$ and $\U(\fg; T)\text{-mod}$,
we immediately obtain the result by using Theorem \ref{thm:iso}.
\end{proof}
We say that $\U_q(\fg, \phi; T)\text{-mod}$ and $\U(\fg; T)\text{-mod}$ are equivalent as ribbon categories.

\subsubsection{Isomorphism theorem over Laurent series ring}
\label{sect:Laurent-series}
Let
\[
\U_q(\fg, \phi; T_t):=\U_q(\fg, \phi; T)\otimes_T T_t, \quad \U(\fg; T_t)=\U(\fg; T)\otimes_T T_t,
\]
and extend $T_t$-linearly the structure maps of $\U_q(\fg, \phi; T)$ and $\U(\fg; T)$ to these superalgebras, 
and retain their universal $R$-matrices, associators etc..  Then we obtain two ribbon (quasi) Hopf superalgebras  over $T_t$.
The following result is an immediate consequence of Theorem \ref{thm:iso}.
\begin{corollary} \label{cor:iso-1} For any root datum of $\fg$, there is an equivalence of
ribbon quasi Hopf superalgebras
\begin{eqnarray*}
(\U_q(\fg, \phi; T_t), \Delta_q, \epsilon_q, S_q, R_q, v_q)
\longrightarrow (\U(\fg; T_t), \Delta, \epsilon, \Phi_{KZ}, S, \alpha, \beta, R, v).
\end{eqnarray*}
\end{corollary}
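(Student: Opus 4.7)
The plan is to obtain Corollary \ref{cor:iso-1} as a formal base change of the equivalence $(f, F, g)$ furnished by Theorem \ref{thm:iso}, using that $T\hookrightarrow T_t$ makes $T_t$ a flat $T$-algebra (in fact the fraction field of $T$), so all structural data extend $T_t$-linearly without obstruction.

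First, I would define the candidate equivalence over $T_t$ by base change. Set
\[
f_t := f\otimes_T\id_{T_t}: \U_q(\fg,\phi;T_t)\longrightarrow \U(\fg;T_t),
\]
and similarly extend $F\in \U(\fg;T)\otimes_T \U(\fg;T)$ and $g\in \U(\fg;T)$ to elements $F_t$ and $g_t$ of the corresponding $T_t$-superalgebras. Since all structure maps of the ribbon quasi Hopf superalgebras in \eqref{eq:Uqg} and \eqref{eq:U-T} are continuous $T$-linear maps and all the distinguished elements (the $R$-matrices $R_q$ and $R$, the associator $\Phi_{KZ}$, the antipode triples, the ribbon elements $v_q, v$) lie in the relevant $t$-adically completed tensor powers, they each admit canonical $T_t$-linear extensions obtained by applying $-\otimes_T T_t$.

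Second, I would verify that $(f_t, F_t, g_t)$ is an equivalence of ribbon quasi Hopf superalgebras in the sense of Definition \ref{def:equiv-ribbon-quasi-Hopf}. The axioms to check (compatibility of $f_t$ with multiplication, compatibility of $F_t$ with coproduct and associator, the gauge relations for $R$ and $v$, and the antipode transformation via $g_t$) are all polynomial identities in the structure data which hold over $T$ by Theorem \ref{thm:iso}. Because $-\otimes_T T_t$ is an exact functor of $T$-modules and commutes with the continuous tensor products involved (for topologically free $T$-modules of finite type, which is the setting relevant here), each identity is preserved verbatim after base change. Bijectivity of $f_t$ follows from bijectivity of $f$ together with flatness of $T_t/T$, and invertibility of $F_t$ and $g_t$ is preserved likewise since their inverses base-change to inverses.

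The only step requiring mild care is the interaction of the topological tensor product with the base change $-\otimes_T T_t$: one must make sure that objects such as $\Phi_{KZ}$ and $R_q$, which a priori live in $t$-adic completions, give well-defined elements of the corresponding $T_t$-structures and that the four-term and braid relations still hold. This is however routine given the notational convention of Section 2.1 identifying $\hat\otimes_T$ with $\otimes_T$ on topologically free modules, and the fact that $T_t = T[t^{-1}]$ so no new completion is needed after inverting $t$. With these verifications, the triple $(f_t, F_t, g_t)$ realises the required equivalence, proving the corollary.
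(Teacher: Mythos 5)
Your proposal is correct and follows the same route the paper takes: the paper states Corollary \ref{cor:iso-1} as ``an immediate consequence of Theorem \ref{thm:iso}'' after extending the structure maps $T_t$-linearly (Section \ref{sect:Laurent-series}), which is exactly the base-change argument $-\otimes_T T_t$ that you spell out. Your version simply makes explicit the flatness/exactness and topological-tensor-product bookkeeping that the paper leaves implicit.
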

Now we have the category $\U_q(\fg, \phi; T_t)\text{-mod}$ (resp. $\U(\fg; T_t)\text{-mod}$) of finite
dimensional $\U_q(\fg, \phi; T_t)$-modules (resp. $\U(\fg; T_t)$-modules).
Corollary \ref{cor:iso-1}
immediately leads to the following result.
\begin{corollary}\label{cor:equiv-mod-cats-1}
There exists a braided tensor equivalence between $\U_q(\fg, \phi; T_t)\text{-mod}$
and $\U(\fg; T_t)\text{-mod}$, which preserves
duality and twist.
\end{corollary}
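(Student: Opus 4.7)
The plan is to mimic the proof of Corollary \ref{cor:equiv-mod-cats} verbatim, with $T$ replaced by $T_t$ throughout, treating the Laurent series case as a base change. First I would invoke Corollary \ref{cor:iso-1}, which already provides the equivalence $(f,F,g)$ of ribbon quasi Hopf superalgebras between $\U_q(\fg,\phi;T_t)$ and $\U(\fg;T_t)$. This is the essential input; the rest of the argument is to translate this equivalence at the Hopf-algebraic level into an equivalence of module categories.

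Next I would apply Theorem \ref{thm:key} (the existence part of the ribbon structure on modules of a ribbon quasi Hopf superalgebra, cited above in the $T$-version) to conclude that both $\U_q(\fg,\phi;T_t)\text{-mod}$ and $\U(\fg;T_t)\text{-mod}$ carry ribbon category structures in the finite-dimensional-over-$T_t$ setting. The point is that an object of the form $V\otimes_T T_t$ for $V$ topologically free of finite rank over $T$ is a finite-dimensional $T_t$-vector space, so the duality, braiding and twist inherited from the structure data $(\Delta_q,\epsilon_q,S_q,R_q,v_q)$ and $(\Delta,\epsilon,\Phi_{KZ},S,\alpha,\beta,R,v)$ descend to honest ribbon structures on the respective module categories over the field $T_t$.

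The final step is then to apply Theorem \ref{thm:ribbon-cat-equiv} (the general transport principle used in the proof of Corollary \ref{cor:equiv-mod-cats}): an equivalence of ribbon quasi Hopf superalgebras given by an isomorphism $f$, a gauge transformation $F$ and an antipode transformation $g$ produces a braided tensor equivalence of the associated module categories, which moreover preserves duality and twist. Concretely, the underlying functor is pullback along $f$; the tensor constraint $\phi_2(M,N): M\otimes N \to M\otimes N$ is given by the action of $F$, which converts the trivial associator on the quantum side into $\Phi_{KZ}$ on the classical side; and $g$ ensures compatibility with the antipode triple so that duals are respected.

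I do not anticipate any genuine obstacle here, since the entire argument is a change-of-scalars version of Corollary \ref{cor:equiv-mod-cats}. The only technical point worth checking is that the base change $-\otimes_T T_t$ is compatible with the topological tensor product used to define the coproduct, the $R$-matrix, and the associator $\Phi_{KZ}$ (all of which are defined as power series in $t$), so that the structure maps extend $T_t$-linearly as claimed in Section \ref{sect:Laurent-series}; this is immediate from the definition of $\hat\otimes_T$ and the fact that $T_t$ is the field of fractions of $T$. Once that is acknowledged, the conclusion follows formally from Corollary \ref{cor:iso-1} and Theorem \ref{thm:ribbon-cat-equiv}.
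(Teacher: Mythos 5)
Your proposal is correct and follows essentially the same route as the paper: Corollary \ref{cor:iso-1} provides the equivalence of ribbon quasi Hopf superalgebras over $T_t$, and Theorem \ref{thm:ribbon-cat-equiv} then transports this to a braided tensor equivalence of the module categories preserving duality and twist, exactly mirroring the proof of Corollary \ref{cor:equiv-mod-cats}. The extra remarks you make (invoking Theorem \ref{thm:key} to exhibit the ribbon structures, and checking compatibility of base change with the topological tensor product) are sound but are left implicit in the paper's "immediately leads to" phrasing.
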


\section{Invariant theory of the quantum general linear supergroup} \label{sect:inv-theory-gl}

We now develop the invariant theory of the quantum general linear supergroup.

%
%
%
%
\subsection{The quantum universal enveloping superalgebra of $\gl_{m|n}$}\label{sect:quantum-gl}

Let us start by describing the structure of the general linear superalgebra $\mathfrak{gl}_{m|n}=\gl(\C^{m|n})$ over $\C$ in some detail.
We regard $\C^{m|n}$ as the space of column vectors of length $m+n$,
and denote its standard basis by
$B_{st}=(e_1, \dots, e_m, e_{m+1}, \dots, e_{m+n})$, that is, for each $b$, the column vector $e_b$ has entries
$(e_b)_a=\delta_{a b}$. Here $e_b$ is even if $b\le m$ and odd if $b>m$. Let $B=(v_1, v_2, \dots, v_{m+n})$
be another basis which is obtained from $B_{st}$ by re-ordering the basis elements in an {\em admissible} way,
that is, in such a way that
$e_i$ appears before $e_j$ for all $i<j\le m$, and $e_\mu$ appears before $e_\nu$ for all $\nu>\mu>m$.
Let $E_{a b}\in \End_\C(M)$ ($1\le a, b\le m+n$) be the matrix units relative to the basis $B$.
Then $E_{a b} v_c=\delta_{b c} v_a$ for all $a, b, c$.  The matrix units form a homogeneous basis of $\gl_{m|n}$ with the commutation relations
\[
[E_{a b}, E_{c d}] = \delta_{b c} E_{a d} -(-1)^{([a]+[b])([c]+[d])}\delta_{d a} E_{c b},
\]
where $[b]=0$ if $v_b$ is even and $[b]=1$ if $v_b$ is odd.

Let $\fb$ be a Borel subalgebra of $\gl_{m|n}$
consisting of upper triangular matrices and let $\fh\subset\fb$ be the Cartan subalgebra consisting of diagonal matrices.
Then the elements $E_{a a}$ ($1\le a\le m+n)$ form a basis of $\fh$.
Let  $({\mathcal E}_a\mid 1\le a\le m+n)$ be the dual basis of
$\fh^*$, that is,
\[
{\mathcal E}_a(E_{b b})=\delta_{a b}, \quad \forall a, b.
\]
The supertrace form on $\fh$ induces a bilinear form $(\ , \ )$ on $\fh^*$ such that
\begin{eqnarray}\label{eq:weight-space-norm}
({\mathcal E}_a, {\mathcal E}_b)=(-1)^{[a]}\delta_{a b}, \quad \forall a, b.
\end{eqnarray}
The set of roots of $\gl_{m|n}$ is $\{{\mathcal E}_a-{\mathcal E}_b\mid a\ne b\}$,
and the set of simple roots is
$
\{\alpha_a:={\mathcal E}_a-{\mathcal E}_{a+1} \mid 1\le a<m+n\}.
$

\begin{notation}\label{weight-space}
Denote by $\cE(m|n)$ the $(m+n)$-dimensional vector space,
which has a basis consisting of elements
${\mathcal E}_a$ ($1\le a\le m+n$) and is equipped with the bilinear form \eqref{eq:weight-space-norm}.
For later use,  we let $\epsilon_i$ ($i=1, 2, \dots, m$)
be the basis elements such that if $\epsilon_i={\mathcal E}_{a_i}$, then $[a_i]=0$. We order the
elements so that for any $\epsilon_i={\mathcal E}_{a_i}$ and $\epsilon_j={\mathcal E}_{a_j}$, if
$i<j$, then $a_i<a_j$. Similarly let  $\delta_j={\mathcal E}_{b_j}$ ($j=1, 2, \dots, n$) be the basis elements 
such that $[b_j]=1$ for all $j$ and $b_i<b_j$ if $i<j$.
\end{notation}

Now we will regard $E_{a b}$ as elements in the universal enveloping
superalgebra $\U({\mathfrak{gl}_{m|n}}; \C)$.
The quadratic Casimir operator of $\U({\mathfrak{gl}_{m|n}}; \C)$ is given by
\[
\omega_{\mathfrak{gl}_{m|n}}=\sum_{a, b=1}^{m+n} (-1)^{[b]} E_{a b} E_{b a},
\]
and $C_{\mathfrak{gl}_{m|n}}=\frac{1}{2}\left(\Delta(\omega_{\mathfrak{gl}_{m|n}})-
\omega_{\mathfrak{gl}_{m|n}}\otimes 1-1\otimes \omega_{\mathfrak{gl}_{m|n}}\right)$ is given by
\[
C_{\mathfrak{gl}_{m|n}} = \sum_{a, b=1}^{m+n}  (-1)^{[b]} E_{a b} \otimes E_{b a}.
\]

Clearly Lemma \ref{eq:Casimir} and  Lemma \ref{lem:four-term} are still valid for $C_{\mathfrak{gl}_{m|n}}$.
As usual, we set  $\U({\mathfrak{gl}_{m|n}}; T)=\U({\mathfrak{gl}_{m|n}}; \C)\otimes_\C T$.
We can now construct the ribbon quasi Hopf superalgebra (see Remark \ref{rem:non-simple})
\begin{eqnarray}\label{eq:Ugl}
\left(\U({\mathfrak{gl}_{m|n}}; T), \Delta, \epsilon, \Phi_{KZ}, S, \alpha, \beta, R, v\right),
\end{eqnarray}
where the universal $R$-matrix is given by $R=\exp( t C_{\mathfrak{gl}_{m|n}}/2)$
and the associator $ \Phi_{KZ}$ is constructed using
the KZ equation associated to  $C_{\mathfrak{gl}_{m|n}}$,
\begin{eqnarray*}
{\frac{d G(z)_{\mathfrak{gl}_{m|n}}}{d z}}&=& {\frac{t}{2\pi i}}\left( { \frac{(C_{\mathfrak{gl}_{m|n}})_{1 2}}{z}}
+ { \frac{(C_{\mathfrak{gl}_{m|n}})_{2 3}}{z-1}}\right) G(z)_{\mathfrak{gl}_{m|n}},
\end{eqnarray*}
as explained in Section \ref{sect:uea} (see \eqref{KZ} in particular).

\subsubsection{Relationship to $A(m-1|n-1)$}
It is very informative to see how the ribbon quasi Hopf superalgebra structure of $\U({\mathfrak{gl}_{m|n}}; T)$ is
related to that of $\U(\fg; T)$ where $\fg=A(m-1|n-1)$.
Recall that $\mathfrak{gl}_{m|n}$ contains the special linear superalgebra
$\mathfrak{sl}_{m|n}$. If $m\ne n$, then $\mathfrak{sl}_{m|n}$  is simple
and equal to $A(m-1|n-1)$. If $m=n$, let $I$ be
the identity matrix of size $n\times n$, then $I\in \mathfrak{sl}_{n|n}$
and $\mathfrak{sl}_{n|n}/\C I$.
Note that $I=\sum_{a=1}^{m+2n} E_{a a}$. We regard it as an element in
$ \U({\mathfrak{gl}_{m|n}}; T)$, and let
$\langle I \rangle$ be the $2$-sided ideal
generated by $I$. Denote by
$\pi: \U({\mathfrak{gl}_{m|n}}; T)\longrightarrow
\U({\mathfrak{gl}_{m|n}}; T)/\langle I \rangle$ the canonical surjection,
We have the following result.
\begin{lemma} \label{lem:equal-Phi-1} \label{lem:equal-Phi}
Denote $\fg=A(m-1|n-1)$.
Let $R$ and $\Phi_{KZ}$ be the universal $R$ matrix and Drinfeld associator
for $\U({\mathfrak{gl}_{m|n}}; T)$ respectively.
\begin{enumerate}
\item The universal $R$ matrix and the Drinfeld associator
for $\U(\fg; T)$ are given by
\[
R_\fg=\pi\otimes\pi(R), \quad \Phi_{{KZ}, \fg}=\pi\otimes\pi\otimes\pi(\Phi_{KZ}).
\]

\item If $m\ne n$, one can regard $\fg={\mathfrak{sl}_{m|n}}$ as a Lie super subalgebra of 
$\gl_{m|n}$, and hence $\U(\fg; T)$ as a super subalgebra of $\U({\mathfrak{gl}_{m|n}}; T)$. Then
\begin{eqnarray}\label{eq:Rsl-Rgl}
R_{\fg}=R\exp\left(-\frac{t I\otimes I}{2(m-n)}\right), \quad \Phi_{{KZ}, \fg}=\Phi_{KZ}.
\end{eqnarray}
\end{enumerate}
\end{lemma}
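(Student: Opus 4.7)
The plan hinges on the fact that $I = \sum_a E_{aa}$ is central in $\gl_{m|n}$, hence commutes with every element of $\U(\gl_{m|n}; T)$. I will prove part (2) first by direct computation and deduce part (1) for $m \neq n$ by applying $\pi$; the case $m = n$ of part (1) will require a separate functoriality argument.

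For part (2), assume $m \neq n$. Under the supertrace form one has $(I, I) = \mathrm{str}(I) = m - n \neq 0$ and $\C I \perp \mathfrak{sl}_{m|n}$, giving the orthogonal decomposition $\gl_{m|n} = \mathfrak{sl}_{m|n} \oplus \C I$. Taking dual bases adapted to this splitting yields
\[
C_{\gl_{m|n}} = C_{\mathfrak{sl}_{m|n}} + \frac{I \otimes I}{m - n},
\]
and the two summands commute in $\U(\gl_{m|n}; T)^{\otimes 2}$ by centrality of $I$, so exponentiating gives the first identity of \eqref{eq:Rsl-Rgl}. For the associator, substitute $G(z) = G^{\mathfrak{sl}}(z) H(z)$ into the KZ equation for $\gl_{m|n}$; cancelling the $\mathfrak{sl}_{m|n}$ part against the $\mathfrak{sl}_{m|n}$-KZ equation leaves a residual equation for $H$ whose coefficients $(I \otimes I \otimes 1)/(m-n)$ and $(1 \otimes I \otimes I)/(m-n)$ are central and mutually commuting. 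Its scalar-type solution
\[
H(z) = z^{t(I\otimes I\otimes 1)/(2\pi i(m-n))}\,(1-z)^{t(1\otimes I\otimes I)/(2\pi i(m-n))}
\]
simultaneously realises the prescribed asymptotics of $G_0$ at $z = 0$ and of $G_1$ at $z = 1$, so $H_0 = H_1$; centrality then gives $\Phi_{KZ} = (G^{\mathfrak{sl}}_0)^{-1} G^{\mathfrak{sl}}_1 = \Phi_{KZ, \mathfrak{sl}_{m|n}}$, proving the second identity.

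Part (1) for $m \neq n$ is then immediate: apply $\pi \otimes \pi$ (respectively $\pi^{\otimes 3}$) to the identities of part (2); the factors involving $I$ collapse because $\pi(I) = 0$, while $\pi$ restricts to an isomorphism $\U(\mathfrak{sl}_{m|n}; T) \to \U(\fg; T)$. For $m = n$ the orthogonal splitting fails since $I \in \mathfrak{sl}_{n|n}$ is isotropic, so one instead argues by functoriality of the construction of Section \ref{sect:uea} (cf.\ Remark \ref{rem:non-simple}): because $I$ is a primitive central element, $\pi$ is a morphism of Hopf superalgebras, so $R_\fg$ and $\Phi_{KZ, \fg}$ are obtained from $\pi^{\otimes 2}(C_{\gl_{n|n}})$ and the KZ construction built from it, which after identification coincide with $\pi \otimes \pi(R)$ and $\pi^{\otimes 3}(\Phi_{KZ})$.

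The main technical hurdle is the $m = n$ case, where $A(n-1|n-1) = \mathfrak{sl}_{n|n}/\C I$ does not inherit a non-degenerate invariant bilinear form from the supertrace in the elementary way available for $m \neq n$. Verifying that the image $\pi^{\otimes 2}(C_{\gl_{n|n}})$ genuinely supplies the Casimir of $\fg$ used in the ribbon quasi Hopf structure on $\U(\fg; T)$ will require an additional argument, likely demonstrating that the extra abelian direction in $\gl_{n|n}/\C I$ contributes trivially to the tensor identities defining $R$ and $\Phi_{KZ}$.
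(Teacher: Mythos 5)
Your proof of part (2) is essentially the paper's own: you decompose $C_{\mathfrak{gl}_{m|n}} = C_{\mathfrak{sl}_{m|n}} + \frac{I\otimes I}{m-n}$, exponentiate the two commuting summands to get the $R$-matrix relation, and show the KZ solutions for the two Lie superalgebras differ only by the central scalar-type factor $H(z)$, which cancels in $\Phi_{KZ}=(G_0)^{-1}G_1$. The paper carries out the very same computation, writing $G^{\mathfrak{sl}}(z) = z^{-\frac{t}{2\pi i}\frac{I\otimes I\otimes 1}{m-n}}(z-1)^{-\frac{t}{2\pi i}\frac{1\otimes I\otimes I}{m-n}}G^{\mathfrak{gl}}(z)$ and observing that the prefactors drop out of the ratio defining the associator.

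Where you differ is in the handling of part (1). The paper simply declares it ``obvious.'' You instead derive part (1) from part (2) when $m\ne n$ by applying $\pi^{\otimes 2}$ and $\pi^{\otimes 3}$, which is a clean, valid deduction since $\pi(I)=0$ kills the exponential correction and $\pi$ restricts to an isomorphism $\U(\mathfrak{sl}_{m|n};T)\to\U(\fg;T)$ in that case. For $m=n$ you correctly identify the real subtlety --- that $\gl_{n|n}/\C I$ is strictly larger than $A(n-1|n-1) = \mathfrak{sl}_{n|n}/\C I$, that the supertrace form is degenerate on $\mathfrak{sl}_{n|n}$ with radical $\C I$, and that one has to verify $\pi^{\otimes 2}(C_{\gl_{n|n}})$ really does supply the Casimir of $\fg$ used in the ribbon quasi Hopf structure --- but you leave this unresolved, calling it a ``technical hurdle.'' Since the paper offers no argument at all here, your treatment of part (1) is actually more careful and more honest than the published one, but it is still incomplete for $m=n$: to close the gap you would need to check that the extra abelian direction $\C\bar J$ in $\gl_{n|n}/\C I$ either does not appear in $\pi^{\otimes 2}(C_{\gl_{n|n}})$, or contributes trivially to the $R$-matrix and associator when restricted to $\fg$-modules, exactly as you surmise. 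I would not count this against you relative to the paper, which asserts the claim without proof.
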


\begin{proof}
Part (1) of the lemma is obvious. To see part (2),
note that ${\mathfrak{sl}_{m|n}}$ is the subalgebra of
${\mathfrak{gl}_{m|n}}$ spanned by the elements $E_{a b}$ and $(-1)^{[a]}E_{a a}- (-1)^{[b]}E_{ b b}$ for all $a\ne b$.
Let $\omega$ be the quadratic Casimir operator of $\U(\fg; \C)$, and let
$C$ be the Casimir element in $\U(\fg; \C)\otimes\U(\fg; \C)$ defined by Lemma \ref{eq:Casimir}.
Denote by $\U(\C I; T)$ the universal enveloping algebra of $\C I$, which is the
polynomial algebra in $I$. If $m\ne n$, then $\fg={\mathfrak{sl}_{m|n}}$ and hence
$\U({\mathfrak{gl}_{m|n}}; T)\cong\U(\fg; T)\otimes\U(\C I; T)$. We have
\[
\omega=\omega_{\mathfrak{gl}_{m|n}} - \frac{I^2}{m-n}, \quad C= C_{\mathfrak{gl}_{m|n}} - \frac{I\otimes I}{m-n}.
\]
Since $I$ is central in  $\U({\mathfrak{gl}_{m|n}}; \C)$,
we immediately obtain \eqref{eq:Rsl-Rgl}.
Let
\[
G(z)=z^{-\frac{t}{2\pi i}\frac{I\otimes I\otimes 1}{m-n}} (z-1)^{-\frac{t}{2\pi i}\frac{1\otimes I\otimes I}{m-n}} G(z)_{\mathfrak{gl}_{m|n}}
\]
(regarded as a power series in $t$ with coefficients being functions of $z\in \C\backslash\{0, 1\}$ valued in $\U(\gl_{m|n}; \C)$).
We have
\begin{eqnarray*}
{\frac{d G(z)}{d z}}&=& {\frac{t}{2\pi i}}\left( { \frac{C_{1 2}}{z}}
+ { \frac{C_{2 3}}{z-1}}\right) G(z),
\end{eqnarray*}
which is the KZ equation associated to  $C$.  Inspection of the construction of the Drinfeld associator \eqref{KZ}
reveals that
\begin{eqnarray}
\Phi_{KZ}= (G_0(z)_{\mathfrak{gl}_{m|n}})^{-1} G_1(z)_{\mathfrak{gl}_{m|n}}= (G_0(z))^{-1} G_1(z)= \Phi_{{KZ}, \fg}.
\end{eqnarray}
\end{proof}

\subsection{The quantum general linear supergroup}\label{sect:q-GL}

We now consider the quantum general linear supergroup $\U_q(\mathfrak{gl}_{m|n}, \phi; T)$ following \cite{Z92, Z93}.
It is generated by
$E_{a a}$ with $a=1, 2, \dots, m+n$ and $e_i, \ f_i$ with  $i=1, 2, \dots, m+n-1$.
Let $K_a = q^{(-1)^{[a]}E_{a a}}$, where
$[a]=0$ if $({\mathcal E}_a, {\mathcal E}_a)=1$, and $[a]=1$ if $({\mathcal E}_a, {\mathcal E}_a)=-1$.
Then the defining relations of $\U_q(\mathfrak{gl}_{m|n}, \phi)$ are
\[
\begin{aligned}
&K_a K_b = K_b K_a, \quad K_a K_a^{-1}=1, \quad \forall a, b,\\
&K_a e_i K_a^{-1} =q^{ ({\mathcal E}_a, \alpha_i)} e_i,  \quad
K_a f_i K_a^{-1} =q^{- ({\mathcal E}_a, \alpha_i)} f_i, \quad \forall a, i, \\
&\text {plus the relations in \eqref{eq:q-group} with $h_i = E_{i i}- (-1)^{[i+1]}E_{i+1, i+1}$}.
\end{aligned}
\]
The Hopf superalgebra structure of $\U_q(\fg, \phi)$ with $\fg=A(m-1|n-1)$ extends
to $\U_q(\mathfrak{gl}_{m|n}, \phi)$ with
$\Delta_q(E_{a a})=E_{a a}\otimes 1+1\otimes E_{a a}$,  $S_q(E_{a a})=-E_{a a}$
and $\epsilon_q(E_{a a})=0$.
The resulting Hopf superalgebra admits a universal R-matrix $R_q$,
which can be expressed as
\[
R_q=K\Theta, \quad K=q^{\sum_a (-1)^{[a]}E_{a a}\otimes E_{a a}}, \quad  \Theta=1\otimes 1 + (q-q^{-1})\sum_s E_s\otimes F_s,
\]
where  all $E_s$ belong to  the subalgebra $\langle e_1, \dots, e_{m+n-1} \rangle$, and
$F_s$ to the subalgebra $\langle f_1, \dots, f_{m+n-1} \rangle$.  The quantum general linear supergroup
\begin{eqnarray}\label{eq:Uqgl}
\left(\U_q(\mathfrak{gl}_{m|n}, \phi; T), \Delta_q, \epsilon_q, S_q, R_q, v_q\right),
\end{eqnarray}
is a quasi triangular ribbon Hopf superalgebra.

We have the quantum special linear supergroup $\U_q(\mathfrak{sl}_{m|n}, \phi; T)$, which
is the subalgebra of  $\U_q(\mathfrak{gl}_{m|n}, \phi; T)$ generated by
$e_i, \ f_i, \ h_i$ ($1\le i\le m+n-1$). If $m\ne n$,  its universal
$R$-matrix is given by $K'\Theta$ with $K'=K q^{-\frac{I\otimes I}{m-n}}$.
If $m=n$, we have seen that $I\in \U_q(\mathfrak{sl}_{m|n}, \phi; T)$.  Let $\langle I \rangle$ be the $2$-sided ideal
in $\U_q(\mathfrak{gl}_{m|n}, \phi; T)$ generated by $I$.
Then $\U_q(\fg, \phi; T)$ is the image of $\U_q(\mathfrak{sl}_{m|n}, \phi; T)$ in the quotient
$\U_q(\mathfrak{gl}_{m|n}, \phi; T)/\langle I \rangle$.
The universal $R$-matrix of $\U_q(\fg, \phi)$ is the image of $R_q$ under this quotient map
(more precisely, the tensor product of the quotient map with itself).
Note that the quotient map does not affect $\Theta$.

\subsection{Isomorphism theorem} \label{sect:proof-main-gl}
We use $\U(T)$ and $\U_q(T)$ to denote $\U(\mathfrak{gl}_{m|n}; T)$ and
$\U_q(\mathfrak{gl}_{m|n}, \phi; T)$ respectively, and define the quasi Hopf superalgebras
\begin{eqnarray}\label{eq:U(R)}
\U(T_t) := \U(T)\otimes_T T_t, \quad \U_q(T_t) := \U_q(T)\otimes_T T_t
\end{eqnarray}
as in Section \ref{sect:Laurent-series}.
In the remainder of this subsection, we set $\K=T$ or $T_t$. Then we have the ribbon Hopf superalgebra
$(\U_q(\K), \Delta_q, \epsilon_q, S_q, R_q, v_q)$ and also the
ribbon quasi Hopf superalgebra $(\U(\K), \Delta, \epsilon, \Phi_{KZ}, S, \alpha, \beta, R, v)$.
Both $\U_q(\K)\text{-mod}$ and $\U(\K)\text{-mod}$ (cf. Notation \ref{notation}) are ribbon categories.
\begin{theorem}\label{thm:equiv-mod-gl}
Let $\K$ be $T$ or $T_t$, and
$\fg=\mathfrak{gl}_{m|n}$.
\begin{enumerate}
\item There is an equivalence of ribbon quasi Hopf superalgebras

$(\U_q(\K), \Delta_q, \epsilon_q, S_q, R_q, v_q)
\longrightarrow(\U(\K), \Delta, \epsilon, \Phi_{KZ}, S, \alpha, \beta, R, v)$.
\item There exists a braided tensor equivalence $\U(\K)\text{-mod}
\longrightarrow \U_q(\K)\text{-mod}$, which preserves
duality and twist.
\end{enumerate}
\end{theorem}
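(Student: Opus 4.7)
Part (2) will follow from part (1) by applying Theorem~\ref{thm:ribbon-cat-equiv} to the ribbon quasi Hopf superalgebra equivalence produced by part (1), in the same manner that Corollary~\ref{cor:equiv-mod-cats} is deduced from Theorem~\ref{thm:iso}. Moreover, the case $\K = T_t$ of part (1) follows from the case $\K = T$ by applying $-\otimes_T T_t$ to every constituent of the equivalence triple $(f,F,g)$: the gauge transformation $F$, being invertible in $\U(T)^{\otimes 2}$, remains invertible in $\U(T_t)^{\otimes 2}$, and $f$ extends to a superalgebra isomorphism $\U_q(T_t) \to \U(T_t)$. Thus the task reduces to producing the equivalence $(f,F,g)$ for $\fg = \gl_{m|n}$ over $T$.

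The plan is to extend Geer's proof of Theorem~\ref{thm:iso} from the simple contragredient case to $\gl_{m|n}$. First, equip $\gl_{m|n}$ with its standard quasi-triangular Lie super bialgebra structure (the classical $r$-matrix being determined by the choice of Borel $\fb$ and by $r + \tau(r) = 2C_{\gl_{m|n}}$) and apply Etingof--Kazhdan to obtain a quasi-triangular Hopf superalgebra $\U_t^{EK}(\gl_{m|n},\phi;T)$, which by the general theory of \cite{EK-I} is automatically equivalent, as a ribbon quasi Hopf superalgebra, to $\U(\gl_{m|n};T)$ of \eqref{eq:Ugl}. Second, identify $\U_t^{EK}(\gl_{m|n},\phi;T)$ with the Drinfeld--Jimbo algebra $\U_q(\gl_{m|n},\phi;T)$ of \eqref{eq:Uqgl} as ribbon Hopf superalgebras. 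The second step factors through the quantum double construction: $\U_q(\gl_{m|n},\phi;T)$ is a quantum double of its Borel subsuperalgebra $\U_q(\fb;T)$ (visible from the factorisation $R_q = K\Theta$), and the Etingof--Kazhdan functor commutes with Drinfeld doubles (the key technical input of \cite{EK-II} exploited by Geer), so the identification reduces to matching the two Borel subalgebras. The latter is achieved by a rank induction that constructs the Chevalley generators inside $\U_t^{EK}(\fb;T)$ and verifies the full quantum Serre relations (including the higher order ones listed in Section~\ref{sect:DJ-alg}) using uniqueness of the quantisation of each successive simple root vector.

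When $m \ne n$, a shortcut is available via Lemma~\ref{lem:equal-Phi-1}: the decomposition $\gl_{m|n} = \fsl_{m|n} \oplus \C I$ is compatible with the Hopf, quasi triangular and associator data; the central $R$-matrix factor $\exp\bigl(-t\,I\otimes I/(2(m-n))\bigr)$ on the classical side matches the central factor $q^{I\otimes I/(m-n)}$ on the quantum side because the algebra isomorphism $f$ preserves the Cartan (Remark~\ref{rem:Cartan-deform}), and the KZ associator is unchanged by Lemma~\ref{lem:equal-Phi-1}(2). One may therefore apply Theorem~\ref{thm:iso} directly to $\fsl_{m|n}$ and extend the resulting equivalence $(f_0,F_0,g_0)$ by the identity on the central polynomial Hopf subalgebra $\U(\C I;T)$. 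The main obstacle is the case $m = n$, where $I \in \fsl_{n|n}$ cannot be split off and the quotient $\fsl_{n|n}/\langle I\rangle = A(n-1|n-1)$ intervenes. Here one must execute the Geer-style induction directly at the level of $\gl_{n|n}$, verifying both that the extra Cartan generator beyond those of $A(n-1|n-1)$ does not obstruct the induction (the higher order Serre relations only involve simple root vectors and are insensitive to this central direction) and that the additional central $R$-matrix contribution matches on the two sides. This is where the bookkeeping is heaviest.
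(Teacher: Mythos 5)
Your outline tracks the paper's argument at the structural level: the reduction of part (2) to part (1) via Theorem~\ref{thm:ribbon-cat-equiv}, the base change from $T$ to $T_t$, the appeal to the Etingof--Kazhdan functorial equivalence $\U_t^{EK}\simeq \U(\cdot;T)$ as ribbon quasi Hopf superalgebras, the reduction to matching Borels via commutativity of Etingof--Kazhdan quantisation with Drinfeld doubles, and the $\fsl_{m|n}\oplus\C I$ shortcut for $m\ne n$ via Lemma~\ref{lem:equal-Phi-1} are all exactly what the paper does. Two points deserve flagging.

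First, the Borel comparison. You describe it as ``a rank induction that constructs the Chevalley generators inside $\U_t^{EK}(\fb;T)$ and verifies the full quantum Serre relations \ldots using uniqueness of the quantisation of each successive simple root vector.'' The paper takes a different, more standard route following Geer and Yamane: introduce the Hopf superalgebra $\tilde\U_q(\fb;T)$ on generators $E_{a,a+1}$ and $\fh$ subject only to quadratic relations, equip it with the canonical Hopf pairing determined by $\langle q^h,q^{h'}\rangle = q^{-(h,h')}$ and $\langle E_{a,a+1},E_{b,b+1}\rangle = \delta_{ab}(-1)^{[a]}/(q-q^{-1})$, and compute that the radical of this pairing is generated precisely by the (quantum and higher order) Serre relations. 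Thus $\U_q(\fb;T)=\tilde\U_q(\fb;T)/\mathrm{Rad}$ coincides simultaneously with the Drinfeld--Jimbo quantum Borel and with the Etingof--Kazhdan quantisation of $\fb$. The radical computation (not a rank induction on Chevalley generators) is the actual mechanism; your description is vague enough that it is unclear whether it would close the argument, and it certainly does not exploit the pairing in the way the paper's cited references do.

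Second, and more decisively, your proposal omits the comultiplication mismatch. The quantum double construction naturally produces the \emph{symmetric} coproduct \eqref{eq:Delta-sym}, namely $\Delta(E_{a,a+1}) = E_{a,a+1}\otimes q^{h_a/2} + q^{-h_a/2}\otimes E_{a,a+1}$, which is \emph{not} the coproduct $\Delta_q(e_i)=e_i\otimes q^{h_i} + 1\otimes e_i$ fixed in Section~\ref{sect:DJ-alg} and used throughout the rest of the paper. The paper resolves this by exhibiting the explicit algebra automorphism $\eta$ of $\U_q(\gl_{m|n},\phi;T)$ with $\eta(E_{a,a+1})=q^{h_a/2}E_{a,a+1}$ and $\eta(E_{a+1,a})=E_{a+1,a}q^{-h_a/2}$, which intertwines the two coproducts. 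Without this step the asserted equivalence of ribbon quasi Hopf superalgebras in the form stated simply has not been produced. Finally, a minor mischaracterisation: you present $m=n$ as the case requiring the heaviest bookkeeping, but the paper's main argument via $D(\fb)$ is uniform in $m$ and $n$; the $m\ne n$ direct-sum shortcut is an optional alternative recorded in a remark, not the backbone of the proof, so there is no separate workload specific to $m=n$.
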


\begin{proof}
If the theorem holds over $T$, it remains true over $T_t$ by extending scalars.
Further,  part (1) implies part (2). Thus it remains only to prove part (1) over $T$,
which is  the $\gl_{m|n}$ case of Theorem \ref{thm:iso}.  This will be proved below.
\end{proof}

\begin{proof}[Proof of Theorem \ref{thm:iso} for $\gl_{m|n}$]
We now complete the proof of Theorem \ref{thm:iso}. The arguments used are quite similar to those in \cite{G07}, so we will be brief.
We start by realising $\fg=\gl_{m|n}$ in terms of a Drinfeld double. The Borel subalgebra $\fb$ of $\fg$ is a Lie super bialgebra with 
co-multiplication $\delta: \fb\longrightarrow \fb\otimes\fb$ given by
\[
\delta(h)=0, \  \ h\in\fh, \quad  \delta(E_{a, a+1})=E_{a, a+1}\otimes h_a- h_a\otimes E_{a, a+1}, \ \forall a,
\]
where $h_a = (-1)^{[a]}E_{a a } -  (-1)^{[a+1]} E_{a+1, a +1}$. Drinfeld's classical double construction \cite{GZB} 
applied to $\fb$ yields a quasi triangular Lie super bialgebra $D(\fb)=\fb\oplus\fb^*$. Then the quotient of $D(\fb)$ 
obtained by identifying $\fh^*$ with $\fh$ is isomorphic to $\fg$ as a quasi triangular Lie super bialgebra.

We take the Hopf superalgebra $\tilde\U_q(\fb; T)$ generated by all the elements $E_{a, a+1}$ and $\fh$ subject to relations
\[
[h, h']=0, \  \forall h, h'\in\fh, \quad [h, E_{a, a+1}]=\alpha_a(h)E_{a, a+1}, \ \forall a,
\]
where the bracket denotes commutator. The co-multiplication is taken to be
\begin{eqnarray}\label{eq:Delta-sym}
\Delta(h)=h\otimes 1 + 1\otimes h, \ \
\Delta(E_{a, a+1})=E_{a, a+1}\otimes q^{h_a/2}+ q^{-h_a/2}\otimes  E_{a, a+1}.
\end{eqnarray}
There exists a $T$-bilinear form
$
\langle \ , \ \rangle:   \tilde\U_q(\fb; T)\times \tilde\U_q(\fb; T) \longrightarrow T
$
with the properties
\[
\begin{aligned}
&\langle x y, z \rangle = \langle x\otimes y, \Delta(z) \rangle, \quad
\langle z,  x y \rangle = \langle \Delta(z), x\otimes y \rangle, \ \forall x, y, z, \\
&\langle q^h,  q^{h'}\rangle = q^{-(h, h')}, \quad h, h'\in \fh, \\
&\langle E_{a, a+1}, E_{b, b+1}\rangle =\delta_{a b}(-1)^{[a]}/(q-q^{-1}), \quad \forall a, b,
\end{aligned}
\]
where $(h, h')$ is the supertrace form on $\fh$. By the same computations as those in
\cite{G06} (also see \cite{Y91, Y94}), we can show that the radical ${\rm Rad}$ of the bilinear form is generated 
by the Serre relations and higher order Serre relations obeyed by the elements $E_{a, a+1}$. 
Thus $\U_q(\fb; T):=\tilde\U_q(\fb; T)/{\rm Rad}$ coincides with the quantum
Borel subalgebra of $\U_q(\fg, \phi; T)$. This is the Etingof-Kazhdan quantisation
of the Lie super bialgebra $\fb$.

Since the Etingof-Kazhdan quantisation commutes with double constructions,
the quantum double $D(\U_q(\fb; T))$ of $\U_q(\fb; T)$ is the Etingof-Kazhdan quantisation $\U_t^{EK}(D(\fb); T)$ of $D(\fb)$.
It is a general fact \cite{EK-I} that
$\U_t^{EK}(D(\fb); T)$  is equivalent to $\U(D(\fb); T)$ as braided quasi Hopf superalgebras.
The quotient of $\U(D(\fb); T)$ obtained by
identifying $\fh^*$ with $\fh$ is equal to $\U(\fg; T)$,
and the corresponding quotient of $\U_t^{EK}(D(\fb); T)$ is $\U_q(\fg, \phi; T)$.
Theorem \ref{thm:iso} for $\fg=\gl_{m|n}$ therefore follows, subject to
the following observation.

The comultiplication of $\U_q(\gl_{m|n}, \phi; T)$ given by \eqref{eq:Delta-sym}
is not that given in Section \ref{sect:DJ-alg}.
However that there exists an algebra automorphism
$\eta:  \U_q(\gl_{m|n}, \phi; T)\longrightarrow  \U_q(\gl_{m|n}, \phi; T)$  defined by
\[
h\mapsto h, \ h\in \fh, \quad E_{a, a+1} \mapsto q^{h_a/2}E_{a, a+1},
\quad   E_{a+1, a} \mapsto E_{a+1, a}q^{-h_a/2}, \quad  \forall a.
\]
Then $\eta(E_{a, a+1})$ and $\eta(E_{a+1, a})$ have the co-products given in
Section \ref{sect:DJ-alg}.

This completes the proof.
\end{proof}

\begin{remark}
We can prove the theorem directly when $m\ne n$.  In this case,
\[
\begin{aligned}
&\U_q(\mathfrak{gl}_{m|n}, \phi; T)=\U_q(\mathfrak{sl}_{m|n}, \phi; T)\otimes\U_q(\C I; T), \\ 
&\U(\mathfrak{gl}_{m|n}; T)=\U(\mathfrak{sl}_{m|n}; T)\otimes \U(\C I; T),
\end{aligned}
\]
where $\U_q(\C I; T)=\U(\C I; T)$ is the algebra generated by $I$. The superalgebra isomorphism
$f: \U_q(\mathfrak{sl}_{m|n}, \phi; T) \longrightarrow \U(\mathfrak{sl}_{m|n}; T)$  given by
Theorem \ref{thm:iso} can be easily extended to a superalgebra isomorphism 
$\tilde{f}: \U_q(\mathfrak{gl}_{m|n}, \phi; T)  \longrightarrow \U(\mathfrak{gl}_{m|n}; T)$, 
whose restriction to  $\U_q(\mathfrak{sl}_{m|n}, \phi; T)$ is $f$, and
to $\U_q(\C I; T)$ is the identity. Now
$\tilde{f}$ together with the gauge transformation $F$
and antipode transformation $g$ given in Theorem \ref{thm:iso} for $\fg=\mathfrak{sl}_{m|n}$
constitutes an equivalence of quasi Hopf superalgebras.
It is clear from part (2) of Lemma \ref{lem:equal-Phi} that this is an
equivalence of ribbon quasi Hopf superalgebras.
\end{remark}

\subsection{Invariant theory over the Laurent series ring}\label{sect:gl-inv-Tt}
Retain the notation of the last section.
Let $V=\C^{m|n}$ be the natural module for the general linear Lie superalgebra $\fg=\mathfrak{gl}_{m|n}$ over $\C$.
Write $V_\K=V\otimes_\C \K$ and $V_\K^*=\Hom_\K(V_\K, \K)$.
Then $V_\K$ and $V_\K^*$ naturally have $\U(\K)$-module structures.
Given any $\varepsilon=(\varepsilon_1, \varepsilon_2, \dots, \varepsilon_k)\in\{+1, -1\}^k$,
we let $\boxtimes V_\K^\varepsilon$ be the ordered tensor
product (cf. \eqref{eq:ordered-tensor}) of
the sequence $(V_\K^{\varepsilon_1}, V_\K^{\varepsilon_2}, \dots, V_\K^{\varepsilon_k})$
of $\U(\K)$-modules.
Denote by $\cT(\K)$ the full subcategory of $\U(\K)\text{-mod}$
with objects of the form $\boxtimes V_\K^\varepsilon$ for all sequences
$\varepsilon$ of any lengths.

Let $\cI: \U(\K)\text{-mod}\longrightarrow \U_q(\K)\text{-mod}$ be
the equivalence of categories guaranteed by Theorem \ref{thm:equiv-mod-gl} (2); 
we abuse notation by using
$V_\K$ and $V_\K^*$ to denote $\cI(V_\K)$ and $\cI(V_\K^*)$ respectively.
Then we have the full subcategory $\cT_q(\K)$ of
$\U_q(\K)\text{-mod}$ with objects $\otimes V_\K^\varepsilon$ for sequences $\varepsilon$.

\begin{lemma}\label{lem:functors-gl-1}
Both $\cT(\K)$  and $\cT_q(\K)$ are ribbon categories,
and there is a braided tensor equivalence
$
\cT(\K)\stackrel{\sim}{\longrightarrow} \cT_q(\K)
$
which preserves duality and twist.
\end{lemma}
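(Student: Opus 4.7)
The plan is to deduce the lemma almost entirely from Theorem \ref{thm:equiv-mod-gl}(2) by showing that $\cT(\K)$ and $\cT_q(\K)$ are ribbon full subcategories of $\U(\K)\text{-mod}$ and $\U_q(\K)\text{-mod}$ respectively, and that the equivalence $\cI$ restricts to a braided tensor equivalence between them.

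First I would verify that $\cT(\K)$ is closed under all the structural operations of the ambient ribbon category. Closure under tensor products is immediate from the definition, since concatenating two sequences $\varepsilon,\varepsilon'$ yields a new sequence and $\boxtimes V_\K^\varepsilon\otimes \boxtimes V_\K^{\varepsilon'}\cong \boxtimes V_\K^{\varepsilon\cup\varepsilon'}$. The unit object $\K$ is the empty tensor product, so it lies in $\cT(\K)$. For closure under duals, I would use the general identity $(X\otimes Y)^*\cong Y^*\otimes X^*$ in any ribbon category, together with $V_\K^{**}\cong V_\K$, to show that $(\boxtimes V_\K^\varepsilon)^*$ is canonically isomorphic to $\boxtimes V_\K^{\varepsilon'}$ where $\varepsilon'$ is obtained from $\varepsilon$ by reversing the order and flipping all signs. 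The braiding, associator $\Phi_{KZ}$, evaluation, coevaluation and twist are all inherited from $\U(\K)\text{-mod}$, so $\cT(\K)$ inherits the full ribbon category structure. The same argument applied to $\U_q(\K)\text{-mod}$ (whose associator is trivial) shows $\cT_q(\K)$ is a ribbon subcategory.

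Next, I would observe that the equivalence $\cI$ sends $V_\K$ and $V_\K^*$ to the corresponding generating objects of $\cT_q(\K)$ (by the convention stated just above the lemma), and, being a braided tensor equivalence preserving duality and twist, sends $\boxtimes V_\K^\varepsilon$ to (an object canonically isomorphic to) the analogous ordered tensor product in $\cT_q(\K)$. Thus $\cI$ restricts to a functor $\cT(\K)\to\cT_q(\K)$. Essential surjectivity holds by the very definition of $\cT_q(\K)$, and fully faithfulness is inherited from $\cI$. All coherence data — tensor constraints $\phi_2,\phi_0$, the braidings, duals and twists — are obtained by restriction, so the restricted functor automatically preserves them.

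The only genuinely non-routine point is checking that the restriction really lands in $\cT_q(\K)$ on the nose (as opposed to merely up to isomorphism), which is settled by the convention identifying $\cI(V_\K)$ with $V_\K$ and $\cI(V_\K^*)$ with $V_\K^*$ in the quantum category; once this bookkeeping is fixed, everything else is formal. The expected main obstacle, if any, is the mild subtlety of tracking the non-trivial associator $\Phi_{KZ}$ when verifying that the duality data (antipode triple $(S,\alpha,\beta)$) restricts consistently to the subcategory — but this is already handled by Theorem \ref{thm:key} and Corollary \ref{cor:equiv-mod-cats}, so in practice the proof reduces to a short verification.
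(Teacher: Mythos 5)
Your proposal is correct and follows essentially the same route as the paper's own (very terse) proof: verify that $\cT(\K)$ and $\cT_q(\K)$ are ribbon subcategories, note that $\cI$ carries the generating objects to the generating objects, and restrict the braided tensor equivalence from Theorem \ref{thm:equiv-mod-gl}(2). Your write-up simply spells out the closure checks (under $\boxtimes$, unit, duals, twist) that the paper dismisses as ``clear'' and is a bit more careful about the strict-versus-up-to-isomorphism bookkeeping of where $\cI$ lands.
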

\begin{proof} The first claim is clear. Now the functor $\cI$
induces a bijection between the sets of objects of the two ribbon categories.
Thus the second part follows from Theorem \ref{thm:equiv-mod-gl}(2).
\end{proof}

Let $\cH(\K)$ be the category of ribbon graphs (cf.  Definition \ref{def:ribbon-direct}).
Given any ribbon category and any object in it, by Theorem \ref{thm:RT}
there exists a unique braided tensor functor which preserves duality and twist
from $\cH(\K)$ to this category. Let
$
\cF_\K: \cH(\K)\longrightarrow \cT(\K)$ and $
\cF_{q, \K}: \cH(\K)\longrightarrow \cT_q(\K)
$
 be the respective braided tensor functors for the
 ribbon categories $\cT(\K)$ and $\cT_q(\K)$. Here we have
 left out of the notation of the braided tensor functors the natural isomorphisms $\varphi_0$ and $\varphi_2$,
 as they are identity maps in both cases.

\begin{lemma}\label{lem:functors-gl-2}
Both $\cF_\K(\cH(\K))$ and $\cF_{q, \K}(\cH(\K))$ are
ribbon categories, and there is a braided tensor equivalence
$
\cF_\K(\cH(\K)) \stackrel{\sim}{\longrightarrow}  \cF_{q, \K}(\cH(\K))
$
which preserves duality and twist.
\end{lemma}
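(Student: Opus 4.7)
The strategy is to deduce the lemma from the universal property of the ribbon graph category $\cH(\K)$ together with the braided tensor equivalence $\Psi:\cT(\K)\stackrel{\sim}{\longrightarrow}\cT_q(\K)$ already supplied by Lemma \ref{lem:functors-gl-1}. The key observation is that $\Psi$ sends $V_\K$ to $V_\K$ (by the very abuse of notation adopted in Section \ref{sect:gl-inv-Tt}) and preserves all of the ribbon structure, while $\cF_\K$ and $\cF_{q,\K}$ both send the distinguished generating object of $\cH(\K)$ (the single positively oriented strand) to $V_\K$ in the respective target categories.

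First I would verify that the images $\cF_\K(\cH(\K))\subseteq \cT(\K)$ and $\cF_{q,\K}(\cH(\K))\subseteq \cT_q(\K)$ are themselves ribbon categories. This is a general fact: the image of any braided tensor functor between ribbon categories which preserves duality and twist is automatically closed under tensor product, braiding, duality and twist (the corresponding structure morphisms on an image object $\cF_\K(X)$ are images of the structure morphisms on $X$). Hence $\cF_\K(\cH(\K))$ inherits a ribbon category structure from $\cT(\K)$, and analogously for $\cF_{q,\K}(\cH(\K))$.

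Next I would construct the equivalence. Composing $\cF_\K$ with $\Psi$ yields a braided tensor functor $\Psi\circ\cF_\K:\cH(\K)\longrightarrow \cT_q(\K)$ which preserves duality and twist and sends the generating object to $\Psi(V_\K)=V_\K$. By the uniqueness clause of Theorem \ref{thm:RT} (the Reshetikhin--Turaev universal property of $\cH(\K)$), any two such functors with the same value on the generator are naturally isomorphic as braided tensor functors preserving duality and twist. Applied to $\Psi\circ\cF_\K$ and $\cF_{q,\K}$, this gives $\Psi\circ\cF_\K\cong \cF_{q,\K}$. Consequently $\Psi$ maps the ribbon subcategory $\cF_\K(\cH(\K))$ (essentially) bijectively onto $\cF_{q,\K}(\cH(\K))$, and its restriction is the desired braided tensor equivalence preserving duality and twist.

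The only mild subtlety, and the place I would pay attention, is bookkeeping at the level of the coherence isomorphisms $\varphi_0,\varphi_2$ of the tensor functors and checking that $\Psi$ restricted to $\cF_\K(\cH(\K))$ lands exactly in $\cF_{q,\K}(\cH(\K))$ rather than merely in its essential closure. Both points are handled uniformly by the naturality of the isomorphism $\Psi\circ\cF_\K\cong \cF_{q,\K}$: on objects it identifies $\Psi(\cF_\K(X))$ with $\cF_{q,\K}(X)$ up to a coherent isomorphism in $\cT_q(\K)$, and on morphisms it identifies $\Psi(\cF_\K(f))$ with $\cF_{q,\K}(f)$ via conjugation by those coherent isomorphisms, thereby producing the equivalence of ribbon subcategories claimed in the lemma.
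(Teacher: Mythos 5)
Your proof is correct and follows exactly the paper's route: the paper disposes of the first claim as ``clear'' and then cites Lemma \ref{lem:functors-gl-1} together with the uniqueness clause of Theorem \ref{thm:RT}, which is precisely your argument composing $\cF_\K$ with the equivalence $\Psi$ and matching it against $\cF_{q,\K}$. Your extra remarks about coherence isomorphisms and essential closure are the kind of bookkeeping the paper leaves implicit.
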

\begin{proof}
The first claim is clear, and the second follows from Lemma \ref{lem:functors-gl-1}
and the uniqueness of the functors $\cF_\K$ and $ \cF_{q, \K}$.
\end{proof}

\begin{theorem} \label{thm:main1}
Let $\K$ be either $T$ or $T_t$.  The  braided tensor functor
$
\cF_\K: \cH(\K)\longrightarrow \cT(\K)$ is full,
as is 
$
\cF_{q, \K}: \cH(\K)\longrightarrow \cT_q(\K)
$.
\end{theorem}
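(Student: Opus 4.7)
The plan is to reduce to the classical (undeformed) case over $\C$ and lift the classical first fundamental theorem for $\gl_{m|n}$ to $T$ by a $t$-adic approximation argument; the $T_t$-case then follows by extension of scalars. First, by Lemma \ref{lem:functors-gl-2}, the images of $\cF_\K$ and $\cF_{q,\K}$ are equivalent as braided tensor categories, so fullness of one is equivalent to fullness of the other. Moreover $\U(T_t)\text{-mod}$ is obtained from $\U(T)\text{-mod}$ by extension of scalars along $T\hookrightarrow T_t=T[t^{-1}]$, and the ribbon graph category $\cH(T_t)$ arises similarly from $\cH(T)$, so fullness of $\cF_T$ implies fullness of $\cF_{T_t}$. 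Thus the whole problem reduces to showing that $\cF_T:\cH(T)\longrightarrow \cT(T)$ is full.

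Next I would identify the $\U(T)$-hom spaces explicitly. Since the coproduct \eqref{eq:co-prod} on $\U(T)=\U(\fg;\C)\otimes_\C T$ is the $T$-linear extension of the classical one, the $\U(T)$-module structure on an ordered tensor product $\boxtimes V_T^\varepsilon$ is simply the classical $\fg$-action extended $T$-linearly, and one has a natural identification
\[
\Hom_{\U(T)}(\boxtimes V_T^\varepsilon,\,\boxtimes V_T^{\varepsilon'})\;\cong\;\Hom_\fg(\boxtimes V^\varepsilon,\,\boxtimes V^{\varepsilon'})[[t]],
\]
which is topologically free of finite rank over $T$. Now observe that modulo $t$ every piece of $t$-dependent structure in $\cT(T)$ collapses to its classical analogue: the braiding, given by $R\circ\tau$ with $R=\exp(tC_{\gl_{m|n}}/2)=1+O(t)$, reduces to the graded flip $\tau$; the associator $\Phi_{KZ}=1+O(t)$ reduces to the identity; and the evaluation/coevaluation for $V_T$ reduce to the classical duality morphisms on $V$ and $V^*$. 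Consequently the reduction of $\cF_T$ modulo $t$ coincides with the classical braided tensor functor from the category of (flat) tangles / walled Brauer diagrams into $\fg$-representations. By the classical FFT for $\gl_{m|n}$ (Sergeev--Berele--Regev, as formulated categorically in \cite{DLZ, LZ14a, LZ15}), this classical functor is full.

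Finally I would carry out a standard $t$-adic lifting argument. Fix a target hom space and choose finitely many ribbon graphs $\gamma_1,\dots,\gamma_N$ whose images mod $t$ span $\Hom_\fg(\boxtimes V^\varepsilon, \boxtimes V^{\varepsilon'})$. Given $\Psi\in\Hom_{\U(T)}(\boxtimes V_T^\varepsilon, \boxtimes V_T^{\varepsilon'})$, write $\Psi\equiv\sum_i c_i^{(0)}\cF_T(\gamma_i)\pmod t$ with $c_i^{(0)}\in\C$; then $\Psi-\sum_i c_i^{(0)}\cF_T(\gamma_i)=t\Psi_1$ for some $\U(T)$-equivariant $\Psi_1$, and iterating and invoking $t$-adic completeness of $T$ produces convergent coefficients $c_i=\sum_{k\ge 0}c_i^{(k)}t^k\in T$ with $\Psi=\sum_i c_i\cF_T(\gamma_i)$. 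This yields fullness of $\cF_T$, hence of $\cF_{q,T}$, $\cF_{T_t}$ and $\cF_{q,T_t}$. The main obstacle is conceptual rather than computational: one must verify carefully that the mod $t$ specialisation of the functor $\cF_T$ — including the $t$-dependent associator, twist and rigidity data — genuinely matches the diagrammatic input required by the classical FFT of \cite{DLZ, LZ14a, LZ15}, so that the lifting argument has the right starting point.
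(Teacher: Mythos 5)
Your proof is correct, and it follows a genuinely different route from the paper's.

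The paper's proof of Theorem \ref{thm:main1} works by explicitly inverting the passage from classical to quantum generators: it first reduces to $\End_{\cT(\K)}(V_\K^{\boxtimes r})\cong\End_{\U(\C)}(V^{\otimes r})\otimes_\C\K$, then uses the spectral decomposition $g=\tau b$ to express $\tau$ in terms of $g$, and finally runs an explicit iterative argument (equations \eqref{eq:tau-interate} and \eqref{eq:Q-interate}) to untangle the $\boxtimes$-tensor from the $\otimes$-tensor, showing directly that each $\tau_i$ and $tQ_i$ lies in $\cF_\K(\cH_r^r(\K))$. You instead observe that every piece of $t$-dependent structure (braiding, associator, duality, twist) is congruent to its classical analogue modulo $t$, so that $\cF_T$ modulo $t$ is the classical diagrammatic functor, which is surjective by the FFT for $\gl_{m|n}$; then you close the gap by a Nakayama-type $t$-adic lifting, using that $\End_{\U(T)}(V_T^{\boxtimes r})$ is a finitely generated free $T$-module and $T$ is $(t)$-adically complete. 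This avoids the paper's explicit bookkeeping with the associator entirely and is arguably cleaner. The trade-off is that your argument is less constructive (it does not exhibit a ribbon-graph preimage of a given morphism), and for the case $\K=T_t$ you must go through a separate flat base-change step $T\hookrightarrow T_t$ since Nakayama does not apply over the field $T_t$; the paper handles both $\K$ uniformly via \eqref{eq:tensoring}. You correctly flag the one point that must be checked carefully, namely that the reduction mod $t$ of the ribbon structure (including $\alpha$, $\beta$, and the twist $v$, all equal to $1+O(t)$) really does collapse to the symmetric-monoidal data used in the classical FFT; with that verified, the argument is complete.
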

\begin{proof}
From Lemma \ref{lem:functors-gl-2} one sees that the first statement implies the second.
Write $V_\K^{\boxtimes_\K r}
=\underbrace{V_\K\boxtimes_\K\dots\boxtimes_\K V_\K}_r$.
If we show that
$\cF_\K(\cH_r^r(\K))=\End_{\cT(\K)} (V_\K^{\boxtimes_\K r})$
for all $r$, then using the left (and right) dualities of the respective ribbon categories,
it will follow that $\cF_\K$ is full.

Obviously $\End_{\cT(\K)}(V_\K^{\boxtimes_\K r})
=\End_{\U(\K)}(V_\K^{\otimes_\K r})$.
Recall that $V_\K^{\otimes_\K r}= V^{\otimes_\C r}\otimes_\C \K$ and  $\U(\K)= \U(\C)\otimes \K$
where $\U(\C)=\U(\mathfrak{gl}_{m|n}; \C)$.
Thus by exactness of $-\ot_\C\K$, we have $\End_{\U(\K)}(V_\K^{\otimes_\K r})\cong
\End_{\cT(\C)}(V^{\otimes_\C r})\otimes_\C \K$ as associative algebras. Hence
\begin{eqnarray}\label{eq:tensoring}
\End_{\cT(\K)}(V_\K^{\boxtimes_\K r}) \cong
\End_{\U(\C)}(V^{\otimes_\C r})\otimes_\C \K.
\end{eqnarray}
The endomorphism algebra $\End_{\U(\C)}(V^{\otimes_\C r})$
is given by the first fundamental theorem of invariant theory for the classical supergroups
\cite{BR, S0, S1,DLZ, LZ14a},
which can be described as follows.
Denote by $\Sym_r$ the symmetric group
of degree $r$, and let $\nu_r$ be the representation of $\C\Sym_r$
on $V^{\otimes r}$ such that
$s_i=(i , i+1)$ acts by
\begin{eqnarray}\label{eq:Sym}
v_1\otimes v_2\otimes\dots\otimes v_r &\mapsto&
v_1\otimes\dots\otimes \tau(v_i \otimes v_{i+1})\otimes \dots \otimes v_r,
\end{eqnarray}
where $\tau=\tau_{V, V}$ is defined by \eqref{eq:tau}.
Then $\End_{GL_{m|n}}(V^{\otimes_\C r})=\nu_r(\C\Sym_r)$.
We $\K$-linearly extend  $\nu_r$ to a representation of $\K\Sym_r$ on
$V_\K^{\otimes_\K r}$. Then $\nu_r(\K\Sym_r)$ as an associative algebra is generated by
the endomorphisms
\[
\tau_i = \underbrace{\id_{V_\K}\otimes\dots\otimes\id_{V_\K}}_{i-1}\otimes \tau \otimes
\underbrace{\id_{V_\K}\otimes\dots\otimes\id_{V_\K}}_{r-i-1},
\quad i=1, 2, \dots, r-1.
\]
Now we need to show that all $\tau_i$ belong to $\cF_\K(\cH_r^r(\K))$.

Let $Q$ and $b$ respectively denote the action of $C$ and $R=\exp(t C/2)$ on $V_\K\boxtimes V_\K$,
and let $g=\tau b$. Then by considering eigenspaces of $C$ in $V_\K\boxtimes V_\K$, we obtain
\begin{eqnarray}\label{eq:eigen-b}
\begin{aligned}
b= \exp\left(\frac{t}{2}\right) \frac{1+\tau}{2} +  \exp\left(-\frac{t}{2}\right) \frac{1-\tau}{2},  \\
g= \exp\left(\frac{t}{2}\right) \frac{1+\tau}{2} -  \exp\left(-\frac{t}{2}\right) \frac{1-\tau}{2},
\end{aligned}
\end{eqnarray}
which leads to
\begin{eqnarray}\label{eq:eigen-tau}
&\tau = \left(g + g^{-1}\right)\left(\exp\left(\frac{t}{2}\right)+\exp\left(-\frac{t}{2}\right)\right)^{-1}.
\end{eqnarray}
Since by Theorem \ref{thm:RT}, the maps
\[
g_i = \id_{V_\K^{\boxtimes(i-1)}}\boxtimes g \boxtimes \id_{V_\K^{\boxtimes(r-i-1)}}, \quad  i=1, 2, \dots, r-1,
\]
all belong to $\cF_A(\cH_r^r(\K))$, so do also
\begin{eqnarray}\label{eq:gbinv}
\begin{aligned}
\hat\tau_i &= \underbrace{\id_{V_\K}\boxtimes\dots\boxtimes\id_{V_\K}}_{i-1}\boxtimes \tau\boxtimes 
\underbrace{\id_{V_\K}\boxtimes\dots\boxtimes\id_{V_\K}}_{r-i-1},  \\
b_i &= \underbrace{\id_{V_\K}\boxtimes\dots\boxtimes\id_{V_\K}}_{i-1}
\boxtimes b\boxtimes \underbrace{\id_{V_\K}\boxtimes\dots\boxtimes\id_{V_\K}}_{r-i-1}.
\end{aligned}
\end{eqnarray}
Clearly, $\hat\tau_i = (g_i + g_i^{-1})(\exp(\frac{t}{2})+\exp(-\frac{t}{2}))^{-1}$.

The difference between $\tau_i$ and ${\hat\tau}_i$ lies in the change from the
tensor product of maps with respect to $\otimes$ to that with respect to
$\boxtimes$, where the transformation rule is given by \eqref{eq:star2}.
In the present context, the left and right unit constraints are trivial, thus by
\eqref{eq:map-ast-tensor}, only the associator $\Phi_{KZ}$ is involved in \eqref{eq:star2},
as can be seen from  the last formula in \cite[II.C]{Z02}.  This allows us to express
each $\tau_i$ in terms of ${\hat\tau}_i$ and (co-products of) the associator.
For any modules $M_1, M_2, M_3$, the action of
the associator on $M_1\boxtimes M_2\boxtimes M_3$ needed in the formulae is exactly the same
as that on $M_1\otimes M_2\otimes M_3$.  Let
\[
Q_j =  \underbrace{\id_{V_\K}\otimes\dots\otimes\id_{V_\K}}_{j-1}\otimes Q \otimes
\underbrace{\id_{V_\K}\otimes\dots\otimes\id_{V_\K}}_{r-j-1}.
\]
Then the $\varphi$ maps in \eqref{eq:star2} (i.e., the $J$ maps in the last formula in \cite[II.C]{Z02})
can be expressed as power series in $t$ with coefficients depending on the maps $\tau_j$ and $t Q_j$
only. Then
\begin{eqnarray}\label{eq:tau-interate}
{\hat\tau}_i= \tau_i + t \Gamma(\tau_1, \dots, \tau_r, Q_1, \dots, Q_r),
\end{eqnarray}
where $\Gamma(\tau_1, \dots, \tau_r, Q_1, \dots, Q_r)$ is a power series in $t$.
The coefficient of each $t^k$ is a linear combination of products of $\tau_j$ and $Q_j$ ($j=1, 2, \dots, r-1$)
in various orders such that the number of $Q_j$ factors is $k+1$ in every term.

Let $\hat{Q}_i = \underbrace{\id_{V_\K}\boxtimes\dots\boxtimes\id_{V_\K}}_{i-1}\boxtimes Q\boxtimes 
\underbrace{\id_{V_\K}\boxtimes\dots\boxtimes\id_{V_\K}}_{r-i-1}$, 
which can be expressed in terms of $b_i$ as $t\hat{Q}_i=-\sum_{k=1}^\infty ( \id_{V_\K}^{\boxtimes r}-b_i)^k/k$.  
Arguments about $\tau_i$ also
apply to $\hat{Q}_i $ to give
\begin{eqnarray}\label{eq:Q-interate}
\hat{Q}_i =  Q_i + t \Theta(\tau_1, \dots, \tau_r, Q_1, \dots, Q_r),
\end{eqnarray}
where $\Theta(\tau_1, \dots, \tau_r, Q_1, \dots, Q_r)$ has the same properties
as $\Gamma(\tau_1, \dots, \tau_r, Q_1, \dots, Q_r)$.

By iterating \eqref{eq:tau-interate} and \eqref{eq:Q-interate},  we obtain $\tau_i$
and $tQ_i$
in terms of ${\hat\tau}_j$ and $t\hat{Q}_j$. This shows that
\begin{equation}\label{quote}
\text{$\tau_i$ and $tQ_i$ can be expressed in terms of $g_j$ and $b_j$.}
\end{equation}
Therefore, $\tau_i\in \cF_A(\cH_r^r(\K))$
for all $i$, and hence $\cF_\K: \cH(\K)\longrightarrow \cT(\K)$
is indeed a full functor.
\end{proof}

\subsection{Invariant theory over $\C(q)$}
Let $\F=\C(q)$ be the field of rational functions in
the indeterminate $q$.
Denote by $\U_q(\F)$ the Jimbo version of the
quantum general linear supergroup \cite{Z93} over $\F$ associated with $\fg=\mathfrak{gl}_{m|n}$.
It is generated by
$K_a^{\pm 1}$ ($a=1, 2, \dots m+n$) and $e_i, f_i$ ($i=1, 2, \dots, m+n-1$)
subject to the same relations as those of the corresponding quantum supergroup over the power series ring $T$.

The Hopf superalgebra $\U_q(\F)$ almost has the structure of a quasi triangular Hopf superalgebra.
The subtlety here is that there exists no universal $R$-matrix
belonging to $\U_q(\F)\otimes\U_q(\F)$
(or any completion of it), nor ribbon element. However,
$\U_q(\F)$  admits a functorial $R$-matrix $R_{W, W'}$
for any pair of objects $W$ and $W'$ in the category $\U_q(\F)\text{-Mod}_{f, \1}$
of
finite dimensional $\Z_2$-graded $\U_q(\F)$-modules of type $\1=(1, 1, \dots, 1)$.
The family of functorial isomorphisms
\[
c_{W, W'}=\tau\circ R_{W, W'}:  W\otimes W'\longrightarrow W'\otimes W
\]
give rise to a braiding for $\U_q(\F)\text{-Mod}_{f,\1}$.
There also exists a family of functorial isomorphisms $v_W: W\longrightarrow W$
satisfying the requirements of a twist. Thus $\U_q(\F)\text{-Mod}_{f,\1}$ is a ribbon category.

Let $V_q$ be the natural $\U_q(\F)$-module, and denote by $V_q^*$ the dual module of $V_q$.
\begin{definition}\label{def:Tq-gl}
The category $\cT_q(\F)$ of tensor modules for $\U_q(\F)$ is the full subcategory
of $\U_q(\F)\text{-Mod}_{f,\1}$ with objects
$V_q^{\varepsilon_1}\otimes V_q^{\varepsilon_2}\otimes \dots\otimes  V_q^{\varepsilon_k}$
for all $k\in\Z_+$, where $\varepsilon_i=\pm 1$ with $V_q^{+1} = V_q$ and $V_q^{-1}=V_q^*$.
This is a ribbon category.
\end{definition}
Then Theorem \ref{thm:RT} gives rise to a  unique braided tensor functor $\cF_{q, \F}: \cH(\F)\longrightarrow \cT_q(\F)$
in the present context.
We have the following result.
\begin{theorem}[FFT for the quantum general linear supergroup]
\label{thm:main2-gl}
The braided tensor functor $\cF_{q, \F}: \cH(\F)\longrightarrow \cT_q(\F)$
preserves duality and twist. Furthermore, it is full.
\end{theorem}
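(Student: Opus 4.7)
The preservation of duality and twist is essentially built in: by Theorem \ref{thm:RT} there is a \emph{unique} braided tensor functor from $\cH(\F)$ to any ribbon category which preserves duality and twist once the image of the generating object is specified, and $\cF_{q,\F}$ is constructed to be this functor. So the substantive content is fullness, and my plan is to deduce it from the $T_t$-case already established in Theorem \ref{thm:main1} via specialisation $q \mapsto \exp(t/2)$.

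First I would make the specialisation precise. The assignment $q\mapsto \exp(t/2)$ gives an injection of fields $\F=\C(q)\hookrightarrow T_t=\C((t))$, because $\exp(t/2)$ is transcendental over $\C$ in $T_t$. Under this map, the defining relations of $\U_q(\F)$ match those of $\U_q(\gl_{m|n},\phi;T_t)$, so one gets an algebra embedding $\U_q(\F)\hookrightarrow \U_q(\gl_{m|n},\phi;T_t)$ whose image is dense in the $t$-adic topology. The natural module $V_q$ over $\F$ base-changes to $V_q\otimes_\F T_t$, which is naturally isomorphic to the natural $\U_q(\gl_{m|n},\phi;T_t)$-module from Section \ref{sect:gl-inv-Tt}; the same holds for $V_q^*$ and hence for every tensor product object $W=V_q^{\varepsilon_1}\otimes\cdots\otimes V_q^{\varepsilon_k}$ in $\cT_q(\F)$.

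Next I would compare morphism spaces. For finite dimensional objects $W,W'$ in $\cT_q(\F)$, the space $\Hom_{\U_q(\F)}(W,W')$ is the $\F$-solution space of an explicit linear system (invariance under the generators of $\U_q(\F)$). Because such solution spaces commute with field extension,
\[
\Hom_{\U_q(\F)}(W,W')\otimes_\F T_t \;\cong\; \Hom_{\U_q(\gl_{m|n},\phi;T_t)}(W\otimes_\F T_t,\,W'\otimes_\F T_t).
\]
Similarly, the Jimbo functorial $R$-matrix $R_{W,W'}$ and twist $v_W$ over $\F$ are built from $K=q^{\sum_a(-1)^{[a]}E_{aa}\otimes E_{aa}}$ and the finite truncated sum $\Theta$, both of which act on finite dimensional tensor products by polynomial expressions in $q^{\pm 1}$ with $\F$-coefficients; specialising $q\mapsto \exp(t/2)$ returns the corresponding structure morphisms over $T_t$. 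Consequently, if $A\subseteq \Hom_{\U_q(\F)}(W,W')$ denotes the $\F$-subspace spanned by $\cF_{q,\F}(\cH(\F))$, then $A\otimes_\F T_t$ coincides with the $T_t$-span of $\cF_{q,T_t}(\cH(T_t))$ inside $\Hom_{\U_q(T_t)}(W\otimes T_t,W'\otimes T_t)$.

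By Theorem \ref{thm:main1}, the latter span is all of $\Hom_{\U_q(T_t)}(W\otimes T_t,W'\otimes T_t)$. Combining with the previous paragraph, $A\otimes_\F T_t = \Hom_{\U_q(\F)}(W,W')\otimes_\F T_t$, and since $T_t/\F$ is faithfully flat this forces $A=\Hom_{\U_q(\F)}(W,W')$, proving fullness. The point I expect to require the most care is verifying, cleanly and without hidden completions, the identification of Jimbo's functorial (rather than universal) braiding over $\F$ with the topological braiding coming from the universal $R$-matrix $R_q$ over $T_t$ after evaluation on $W\otimes W'$; once this compatibility is nailed down, the rest of the argument is a standard base-change and faithful-flatness manoeuvre.
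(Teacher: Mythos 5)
Your proposal is correct and follows essentially the same route as the paper: preservation of duality and twist from the uniqueness in Theorem \ref{thm:RT}; specialisation $\F\hookrightarrow T_t$ via $q\mapsto\exp(t/2)$; identification of $\Hom$-spaces after base change, justified by the density of $\U_q(\F)\otimes_\F T_t$ in $\U_q(T_t)$; compatibility of $\cF_{q,\F}$ with $\cF_{q,T_t}$ on generators; and then Theorem \ref{thm:main1} as the key input. The only cosmetic difference is the final descent step: you invoke faithful flatness of $T_t/\F$, while the paper counts dimensions of the two sides of \eqref{eq:End-End} and \eqref{eq:cH-cH} and uses the inclusion $\cF_{q,\F}(\cH_r^r(\F))\subseteq\End_{\U_q(\F)}(V_q^{\otimes r})$; for finite dimensional spaces over a field these are the same manoeuvre. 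You also skip the paper's initial reduction (via the ribbon dualities) from general $\Hom(W,W')$ to $\End(V_q^{\otimes r})$, arguing directly for all tensor objects, which is equally valid.
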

\begin{proof}
We only need to prove the fullness of $\cF_{q, \F}$.
Let  $\eta(r)=(\underbrace{+, +, \dots, +}_r)$ for any $r$
and denote $\Hom(\eta(r), \eta(r))$ by $\cH_r^r(\K)$.  In order to prove that $\cF_{q, \F}$ is full,
it suffices to show that $\cF_{q, \F}(\cH_r^r(\F)) =\End_{\U_q(\F)}(V_q^{\otimes r})$
because of the dualities of the ribbon categories $\cH(\F)$ and $\cT_q(\F)$.

Let $\K=T_t$, and let $\F\rightarrow \K$ be the field extension
defined by $q\mapsto \exp(t/2)$.
We define a $\U_q(\F)$-action on $\K$ by the composition of the co-unit and
this field extension. Then for any object $W_q$ in $\U_q(\F)\text{-Mod}_{f, \1}$,
we have the corresponding $\U_q(\F)$-module $W_q\otimes_\F\K$.
Now the specialisation $\U_q(\F)\otimes_\F\K$ of $\U_q(\F)$ acts
on $W_q\otimes_\F\K$ in the natural way:  $(x\otimes k) (w\otimes k')
=x w\otimes k k'$ for all $x\in\U_q(\F)$, $w\in W_q$ and $k, k'\in\K$.
Note that $\U_q(\F)\cong \U_q(\F)\otimes 1$, thus for any any object $W_q$ in $\U_q(\F)\text{-Mod}_{f, \1}$, we have
$W_q^{\U_q(\F)}\otimes_\F\K = (W_q\otimes_\F\K)^{\U_q(\F)\otimes 1}=(W_q\otimes_\F\K)^{\U_q(\K)}$.
It then follows that $\End_{\U_q(\F)}(W_q)\otimes_\F\K=
\End_{\U_q(\F)\otimes_\F\K}(W_q\otimes_\F\K)$. This implies that
\[
\End_{\U_q(\F)}(V_q^{\otimes_\F r})\otimes_\F\K=
\End_{\U_q(\F)\otimes_\F\K}(V_q^{\otimes_\F r}\otimes_\F\K)
=\End_{\U_q(\F)\otimes_\F\K}((V_q\otimes_\F\K)^{\otimes_\K r}).
\]

Note that $\U_q(\F)\otimes_\F\K$ is embedded in $\U_q(\K)$ with $e_i\otimes 1$ and
$f_i\otimes 1$ mapped to the corresponding generators of $\U_q(\K)$, and the elements
$K_a\otimes 1$ to power series in $\U_q(\K)$. Regarding $V_\K$
as $\U_q(\F)\otimes_\F\K$-module via the superalgebra embedding, we have
$V_q \otimes_\F\K\cong V_\K$,
and hence $V_q^{\otimes r} \otimes_\F\K\cong V_\K^{\otimes_\K r}$.
Observe that $\U_q(\F)\otimes_\F\K$ is a dense subalgebra
of $\U_q(\K)$ in the $t$-adic topology.
Therefore
$\End_{\U_q(\F)\otimes_\F\K}((V_q\otimes_\F\K)^{\otimes_\K r})\cong \End_{\U_q(\K)}(V_\K^{\otimes_\K r})$
as vector spaces, and hence
\begin{eqnarray}\label{eq:End-End}
\End_{\U_q(\F)}(V_q^{\otimes_\F r})\otimes_\F\K\cong \End_{\U_q(\K)}(V_\K^{\otimes_\K r}).
\end{eqnarray}
Further, by inspecting the actions of the functors $\cF_{q, \F}$ and $\cF_{q, \K}$ on the generators of
$\cH(\F)$ and $\cH(\K)$ respectively, we find a vector space isomorphism
\begin{eqnarray}\label{eq:cH-cH}
\cF_{q, \F}(\cH_r^r(\F)) \otimes_\F\K \cong \cF_{q, \K}(\cH_r^r(\K)).
\end{eqnarray}

From \eqref{eq:End-End} and \eqref{eq:cH-cH},  we obtain
\[
\begin{aligned}
&\dim_\F \End_{\U_q(\F)}(V_q^{\otimes r}) =\dim_\K(\End_{\U_q(\F)}(V_q^{\otimes r}) 
\otimes_\F\K) =\dim_\K \End_{\U_q(\K)}(V_\K^{\otimes r}); \\
&\dim_\F \cF_{q, \F}(\cH_r^r(\F)) = \dim_\K (\cF_{q, \F}(\cH_r^r(\F)) \otimes_\F\K) = \dim_\K \cF_{q, \K}(\cH_r^r(\K)).
\end{aligned}
\]
Now $\cF_{q, \K}(\cH_r^r(\K))=\End_{\U_q(\K)}(V_\K^{\otimes r})$ by Theorem \ref{thm:main1},  hence
\[
\dim_\F \cF_{q, \F}(\cH_r^r(\F)) =\dim_\F \End_{\U_q(\F)}(V_q^{\otimes r}).
\]
This shows that $\cF_{q, \F}(\cH_r^r(\F)) =\End_{\U_q(\F)}(V_q^{\otimes r})$, completing the proof.
\end{proof}

\subsection{Representations of Hecke algebras and walled BMW algebras}

Let $B_r$ denote the braid group of degree $r$ (i.e. on $r$ strings) generated by
 $X^+_i$ ($i=1, ,2 \dots, r-1$) given by
the second ribbon graph in Figure \ref{fig:Br-generators}.
Then $\cH_r^r(\K)=\K B_r$, the group algebra of $B_r$.
Clearly $X^-_i=(X^+_i)^{-1}$, and $I^+$ is the identity.

\begin{figure}[h]
\begin{center}
\setlength{\unitlength}{.7mm}
\begin{picture}(140, 100)(5,0)
\put(10, 80){$I^+=$}
\put(30,90){\vector(0,-1){20}}
\put(40,90){\vector(0,-1){20}}
\put(55, 80){... ... ... ...}
\put(68,70){\tiny $r$}
\put(100,90){\vector(0,-1){20}}
\put(110,90){\vector(0,-1){20}}

\put(10, 50){$X_i^+=$}
\put(30,60){\vector(0,-1){20}}
\put(38,50){...}
\put(36,40){\tiny $i-1$}
\put(50,60){\vector(0,-1){20}}

\put(69,51){\line(-1,1){9}}
\put(71,49){\vector(1,-1){10}}
\put(80,60){\vector(-1,-1){21}}

\put(90,60){\vector(0,-1){20}}
\put(98,50){...}
\put(92,40){\tiny $r-i-1$}
\put(110,60){\vector(0,-1){20}}

\put(10, 20){$X_i^-=$}
\put(30,30){\vector(0,-1){20}}
\put(38,20){...}
\put(36,10){\tiny $i-1$}
\put(50,30){\vector(0,-1){20}}

\put(60,30){\vector(1,-1){21}}
\put(69,19){\vector(-1,-1){9}}
\put(80,30){\line(-1,-1){9}}

\put(90,30){\vector(0,-1){20}}
\put(98,20){...}
\put(92,10){\tiny $r-i-1$}
\put(110,30){\vector(0,-1){20}}
\end{picture}
\end{center}
\caption{Braids}
\label{fig:Br-generators}
\end{figure}

Let $\mathcal{I}_r$ be the two-sided ideal of $\cH_r^r(\F)$ generated
( of ribbon graphs) by
\begin{eqnarray}\label{eq:Hecke-relat}
X_i^+ - X_i^- - (q-q^{-1}) I^+, \quad \text{for \ } i=1, 2, \dots, r-1.
\end{eqnarray}
Then $H_r(q; \F):=\cH_r^r(\F)/\mathcal{I}_r$ is the Hecke algebra of degree $r$.

From Theorem \ref{thm:main2-gl},  we obtain a representation
$\nu_r: \cH_r^r(\F)\longrightarrow \End_\F(V_q^{\otimes r})$
of the braid group by restricting $\cF_{q, \F}$ to $\cH_r^r(\F)$.
Then $\nu_r(X_i^+) =\cF_{q, \F}(X_i^+)$, and we have
\begin{eqnarray}\label{eq:braid-gp}
\nu_r(X_i^+) =\underbrace{\id_{V_q}\otimes \dots \otimes\id_{V_q}}_{r-i-1}\otimes \cF_{q, \F}(X^+) 
\otimes\underbrace{\id_{V_q}\otimes \dots \otimes\id_{V_q}}_{i-1},
\end{eqnarray}
with
$
\cF_{q, \F}(X^+) =c_{V_q, V_q}=\tau\circ R_{V_q, V_q}.
$

\begin{proposition} \label{prop:Hecke}
Let $\U_q(\F)$ be the quantum general linear supergroup associated with $\fg=\mathfrak{gl}_{m|n}$
corresponding to any given choice of Borel subalgebra for $\fg$.
The representation $\nu_r$  constructed above for the braid group $B_r$ factors through
the Hecke algebra $H_r(q; \F)$.
\end{proposition}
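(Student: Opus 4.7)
The plan is to reduce the claim to a single quadratic identity for the braiding $c_{V_q, V_q}$ on $V_q \otimes V_q$, and then verify this identity by extending scalars to $\K = T_t$ via $q \mapsto \exp(t/2)$ and invoking the equivalence between $\U_q(\K)\text{-mod}$ and $\U(\K)\text{-mod}$.

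For the reduction: since $\cF_{q,\F}$ is a braided tensor functor, $\nu_r(X_i^\pm)$ equals the local braiding $c_{V_q, V_q}^{\pm 1} = \cF_{q,\F}(X^\pm)$ applied to the $i$-th and $(i+1)$-th tensor factors of $V_q^{\otimes r}$ (with identity elsewhere), as recorded in \eqref{eq:braid-gp}. Consequently, the Hecke relation \eqref{eq:Hecke-relat} in $\End_{\U_q(\F)}(V_q^{\otimes r})$ reduces to showing
\[
c_{V_q, V_q} - c_{V_q, V_q}^{-1} \;=\; (q - q^{-1})\, \id_{V_q \otimes V_q}
\]
in $\End_{\U_q(\F)}(V_q \otimes V_q)$, which in turn is equivalent to the quadratic identity $(c_{V_q, V_q} - q\,\id)(c_{V_q, V_q} + q^{-1}\,\id) = 0$.

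To verify the quadratic identity, I would extend scalars from $\F$ to $\K$ along $q \mapsto \exp(t/2)$. The proof of Theorem \ref{thm:main2-gl} already established $\End_{\U_q(\F)}(V_q \otimes V_q) \otimes_\F \K \cong \End_{\U_q(\K)}(V_\K \otimes V_\K)$, so by faithful flatness of the field extension $\F \to \K$ it suffices to verify the identity on the right-hand side. By Corollary \ref{cor:equiv-mod-cats-1}, the braided tensor equivalence $\U_q(\K)\text{-mod} \simeq \U(\K)\text{-mod}$ transports $c_{V_q, V_q}$ to the braiding $g = \tau \circ R|_{V_\K \otimes V_\K}$ in $\U(\K)\text{-mod}$. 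From \eqref{eq:eigen-b},
\[
g \;=\; q\,\frac{1+\tau}{2} \;-\; q^{-1}\,\frac{1-\tau}{2},
\]
so $g$ acts as $q$ on the $(+1)$-eigenspace of $\tau$ and as $-q^{-1}$ on the $(-1)$-eigenspace. Hence $(g - q\,\id)(g + q^{-1}\,\id) = 0$, and transporting back under the equivalence yields the desired identity on the quantum side.

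The main conceptual point to justify is that the equivalence of Corollary \ref{cor:equiv-mod-cats-1} identifies $c_{V_q, V_q}$ with $g$ on the nose: the underlying ribbon quasi Hopf structures of $\U_q(\K)$ and $\U(\K)$ differ substantially (the associators are different, and the equivalence involves a nontrivial gauge transformation $F$ and antipode change $g$), so $R_q$ and $R$ are not literally mapped to each other. However, a braided tensor equivalence by definition carries the functorial braiding of one side to that of the other, which is exactly what is needed here. Once this identification is in hand, the eigenvalue computation via \eqref{eq:eigen-b} is immediate, and faithful flatness descends the relation back to $\F$, completing the argument.
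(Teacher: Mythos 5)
Your proposal is correct, but it takes a genuinely different route from the paper's. The paper works entirely over $\F$: it decomposes $V_q\otimes V_q=L_q(s)\oplus L_q(a)$, invokes a ``general property of the universal $R$-matrix'' to write $c_{V_q,V_q}=q^{\chi_s}P[s]-q^{\chi_a}P[a]$ (where the sign is determined by whether the classical limit of the summand lies in $S^2V$ or $\wedge^2 V$), computes $\chi_s=1$, $\chi_a=-1$ from the Casimir formula $(\lambda+2\rho,\lambda)$, and reads off $(c_{V_q,V_q}-q)(c_{V_q,V_q}+q^{-1})=0$. You instead base-change along $q\mapsto\exp(t/2)$ to $\K=T_t$, carry $c_{V_q,V_q}$ across the braided tensor equivalence of Theorem~\ref{thm:equiv-mod-gl}(2) to the classical braiding $g=\tau\circ\exp(tC/2)$ of $\U(\K)\text{-mod}$, read the eigenvalues from \eqref{eq:eigen-b}, and descend by faithful flatness using the isomorphism \eqref{eq:End-End}. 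Both arguments ultimately rest on the same data --- the Casimir eigenvalues $\pm1$ on the two $\tau$-eigenspaces of $V\otimes V$ --- but yours effectively \emph{proves} the ``general property of the universal $R$-matrix'' the paper cites without proof, by unwinding $R_q$ to $\exp(tC/2)$ through the Etingof--Kazhdan equivalence. The trade-off is clear: the paper's argument is short and stays over $\F$, while yours is more self-contained but requires the scalar-extension and category-equivalence bookkeeping (and the observation, which you correctly flag, that the braided tensor equivalence only conjugates the two braidings by the $\varphi_2$-isomorphisms --- harmless here because the quadratic relation is conjugation-invariant). One small correction: the reference for the braided tensor equivalence in the $\gl_{m|n}$ case should be Theorem~\ref{thm:equiv-mod-gl}(2) rather than Corollary~\ref{cor:equiv-mod-cats-1}, which is stated for simple contragredient $\fg$.
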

\begin{proof} The natural module $V_q$ has the property that
$
V_q\otimes V_q = L_q(s) \oplus L_q(a),
$
where $L_q(s)$ and $L_q(a)$ are simple $\U_q(\F)$-modules which are $q$-analogues of
the $\Z_2$-graded symmetric tensor $L(s):=S^2 V_\C$ and the $\Z_2$-graded skew symmetric tensor
$L(a):=\wedge^2 V_\C$.
Let $P[s]$ and $P[a]$ be idempotents mapping $V_q\otimes V_q$ surjectively onto $L_q(s)$ and $L_q(a)$
respectively. Then $P[s]+P[a]=\id$ and $P[s] P[a]=P[a] P[s]=0$.

To consider the braiding $c_{V_q, V_q}$, we let
$\chi_s:=\frac{1}{2}\omega_{L(s)} - \omega_{V_\C}$
and $\chi_a:=\frac{1}{2}\omega_{L(a)} - \omega_{V_\C},$
where $\omega_L$ is the eigenvalue of the quadratic Casimir of $\fg=\mathfrak{gl}_{m|n}$ in the simple
$\fg$-module $L$. Then it follows from a general property of the universal $R$-matrix that
the braiding operator can be expressed as  $c_{V_q, V_q}=q^{\chi_s} P[s] - q^{\chi_a} P[a]$.

The eigenvalue of the Casimir operator in a finite dimensional simple module $L$ can be computed by
considering $L$ as a highest weight module relative to any chosen Borel subalgebra $\fb\subset\fg$,
and is given by $(\lambda+2\rho, \lambda)$, where $\lambda$ is the highest weight of $L$ and $2\rho$ is the
 graded sum of the positive roots.  It is an easy calculation to show that $\chi_s=1$ and $\chi_a=-1$. Hence
$
c_{V_q, V_q}=q P[s] - q^{-1} P[a].
$
Therefore, $(c_{V_q, V_q}-q)(c_{V_q, V_q}-q^{-1})=0$ and it follows that
\[
(\nu_r(X^+_i)-q)(\nu_r(X^+_i)+q^{-1})=0, \quad \forall i.
\]
Hence the  representation of $B_r$ defined by \eqref{eq:braid-gp} factors through the Hecke algebra.
\end{proof}

\begin{remark}
The above result is in exact analogy with the case of classical $\gl_m$.
\end{remark}

\begin{example}
Consider the quantum general linear supergroup  \cite{Z92, Z93}
defined  with respect to the distinguished root datum of $\fg=\mathfrak{gl}_{m|n}$,
which corresponds to the following admissible ordering
$(\mathcal{E}_1, \mathcal{E}_2, \dots, \mathcal{E}_{m+n})=
(\varepsilon_1, \dots, \varepsilon_m, \delta_1, \dots, \delta_n)$
of the basis elements of $\cE(m|n)$.
Let $\{e_a\mid a=1, 2, \dots, m+n\}$ be the standard basis of $V_q$ which is homogeneous with parity $[e_a]=[a]$,
where $[i]=0$ for $i\le m$ and $[m+j]=1$ for all $j>0$.
Then the action of the universal $R$-matrix on $V_q\otimes V_q$ is given by (see the formula below equation (9) in \cite{Z98}):
\begin{eqnarray}\label{eq:R-V-V}
\begin{split}
R_{V_q,V_q}=&q^{\sum_{a=1}^{m+n}(-1)^{[a]}e_{aa}\otimes e_{aa}}
	+(q-q^{-1})\sum\limits_{a<b}(-1)^{[b]}e_{ab}\otimes e_{ba},
\end{split}
\end{eqnarray}
where $e_{a b}$ are the matrix units such that $e_{a b} e_c=\delta_{b c} e_a$,
and the tensor product $e_{a b}\otimes e_{c d}$ of matrices acts on $V_q\otimes V_q$, in the usual way which respects the $\Z_2$-gradings,  by
\[
e_{a b}\otimes e_{c d}(e_f\otimes e_g) =(-1)^{[f]([c]+[d])}\delta_{b f} \delta_{d g} e_a\otimes e_c.
\]
The first term on the right side of \eqref{eq:R-V-V} is interpreted as
\[
q^{\sum\limits_{a=1}^{m+n}(-1)^{[a]}e_{aa}\otimes e_{aa}}=1\otimes 1 +
\sum_{a=1}^{m+n}\left(q^{(\mathcal{E}_a, \mathcal{E}_a)}-1\right)e_{aa}\otimes e_{aa}.
\]
This is clear when the left side is regarded a power series of matrices over $T$.
\end{example}

\begin{remark}
The representation \eqref{eq:braid-gp} of the Hecke algebra has been known since the early 80s
from the Perk-Schultz models in statistical mechanics. The spectral parameter dependent
$R$-matrix $R(x)$ of those models can be
found, e.g., in \cite[p.1973]{Z92},
and the $R$-matrix $R_{V_q, V_q}$ given by \eqref{eq:R-V-V} is its  limit of infinite spectral parameter,
i.e., $\lim\limits_{x\to\infty}\frac{R(x)}{x}$.
The $R$-matrix \eqref{eq:R-V-V}
was also the main input in the definition of the quantum coordinate superalgebra of
the quantum general linear supergroup as treated in \cite{Z98}.
\end{remark}

Fix non-negative integers $r$ and $s$, let $\eta'(s)$ be the sign tuple $(\underbrace{-, \dots, -}_s)$
and let $\eta(r, s)= \eta(r)\otimes \eta'(s)$. Then
$\eta(r, s)=(\underbrace{+, \dots, +}_r, \underbrace{-, \dots, -}_s)$.
Consider
\[
\cH_{r, s}^{r, s}(\F):=\Hom_{\cT_q(\F)}(\eta(r, s), \eta(r, s));
\]
this is an associative algebra under composition of ribbon diagrams.
We let $\mathcal{IH}$ be the $\F$-span of the ribbon graphs generated, under both
composition and juxtaposition, by
\begin{eqnarray}\label{eq:wBMW-relat}
\begin{aligned}
X^+ - X^- - (q-q^{-1}) I_2^+,
\quad \Omega^- U^+ - z,
\quad  \Omega^+ U^- -z,
\end{aligned}
\end{eqnarray}
for any given $z\in \F$, and set $\cI\cH_{r, s}^{r, s}=\cH_{r, s}^{r, s}(\F)\cap \mathcal{IH}$.

\begin{definition}
Let $H_{r, s}(q, z; \F)=\cH_{r, s}^{r, s}(\F)/\cI\cH_{r, s}^{r, s}$, and call it
the walled BMW algebra with parameter $z$.
\end{definition}

\begin{proposition} \label{prop:walled-BMW}
For all nonnegative integers $r$ and $s$, the functor $\cF_{q, \F}$ restricts to a surjective algebra homomorphism
$H_{r, s}(q, [m-n]_q; \F)\longrightarrow\End_{\U_q(\F)}(V_q^{\otimes r}\otimes {V_q^*}^{\otimes s})$
from the walled BMW algebra with parameter $[m-n]_q=\frac{q^{m-n}-q^{-m+n}}{q-q^{-1}}$
to the endomorphism algebra of $V_q^{\otimes r}\otimes {V_q^*}^{\otimes s}$.
\end{proposition}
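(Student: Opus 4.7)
The plan is to split the statement into two parts: first establishing surjectivity of $\cF_{q,\F}$ on the relevant Hom-space, and then verifying that the defining relations of the walled BMW algebra lie in the kernel of $\cF_{q,\F}$, so that it factors through the quotient $H_{r,s}(q, [m-n]_q; \F)$.

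For the surjectivity, I would invoke Theorem \ref{thm:main2-gl} directly. Fullness of the braided tensor functor $\cF_{q,\F}: \cH(\F)\longrightarrow \cT_q(\F)$ applied to the objects $\eta(r,s)$ gives that the induced $\F$-linear map
\[
\cH_{r,s}^{r,s}(\F) = \Hom_{\cH(\F)}(\eta(r,s),\eta(r,s)) \twoheadrightarrow \End_{\U_q(\F)}(V_q^{\otimes r}\otimes {V_q^*}^{\otimes s})
\]
is surjective, and it is an algebra homomorphism since $\cF_{q,\F}$ preserves composition of morphisms. It remains to show that this map annihilates the two-sided ideal $\cI\cH_{r,s}^{r,s}$. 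Because $\cF_{q,\F}$ is a tensor functor, its kernel is closed under both composition and juxtaposition, so it suffices to check that $\cF_{q,\F}$ sends each of the three generators in \eqref{eq:wBMW-relat} to zero.

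The Hecke-type generator $X^+ - X^- - (q-q^{-1})I_2^+$ is handled by the proof of Proposition \ref{prop:Hecke}: from the decomposition $V_q\otimes V_q = L_q(s)\oplus L_q(a)$ with idempotent projectors $P[s], P[a]$ one gets $c_{V_q,V_q} = q\,P[s] - q^{-1}P[a]$, which yields $c_{V_q,V_q} - c_{V_q,V_q}^{-1} = (q-q^{-1})\id$, so the image of this generator vanishes. The remaining two generators $\Omega^\mp U^\pm - z$ correspond (after juxtaposition with identities) to a closed oriented circle. By the Reshetikhin--Turaev construction (Theorem \ref{thm:RT}) such a circle evaluates under $\cF_{q,\F}$ to the quantum superdimension of $V_q$ as an element of $\End_{\U_q(\F)}(\F) = \F$.

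The one step that requires genuine computation, and is where I expect most of the work to sit, is the identification of this quantum superdimension with the prescribed parameter $[m-n]_q$, for both orientations of the loop simultaneously. Concretely I would compute the categorical trace of $\id_{V_q}$ using the pivotal structure coming from $K = q^{2h_\rho}$, so that $\sdim_q V_q = \sum_{a=1}^{m+n}(-1)^{[a]} q^{(\mathcal{E}_a,\,2\rho)}$, and verify that for the distinguished Borel this sum telescopes to $[m-n]_q$; invariance of $\sdim_q V_q$ under odd reflections then gives the same value for every root datum of $\gl_{m|n}$. The small technical obstacle is making sure the sign conventions from the $\Z_2$-grading and the two possible circle orientations agree, so that both $\Omega^- U^+$ and $\Omega^+ U^-$ produce the same scalar $[m-n]_q$.
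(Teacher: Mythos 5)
Your proposal matches the paper's argument essentially step for step: surjectivity from Theorem \ref{thm:main2-gl}, reduction to showing the three generators of $\mathcal{IH}$ die under $\cF_{q,\F}$, the skein relation from the eigenvalue computation in Proposition \ref{prop:Hecke}, identification of the closed loops with $\sdim_q(V_q)$, and appeal to Lemma \ref{rem:q-sdim} (invariance under odd reflections) to pass from the distinguished root datum to an arbitrary one. The only cosmetic difference is that the paper cites the value $\sdim_q(V_q)=[m-n]_q$ for the distinguished Borel from \cite{Z92} rather than recomputing it, and takes the equality $\cF_{q,\F}(\Omega^-U^+)=\cF_{q,\F}(\Omega^+U^-)$ as a standard ribbon-category fact rather than flagging it as a technical point.
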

\begin{proof} This will follow from Theorem \ref{thm:main2-gl}
if we can show that $\cF_{q, \F}(\mathcal{IH})=\{0\}$.
We have already shown in the proof of Proposition \ref{prop:Hecke} that
\[
\cF_{q, \F}(X^+) - \cF_{q, \F}(X^-) - (q-q^{-1}) \id_{V_q\otimes V_q}=0.
\]
Now
$
\cF_{q, \F}(\Omega^- U^+)=\cF_{q, \F}(\Omega^+ U^-)=\sdim_q(V_q),
$
where $\sdim_q(V_q)=str_{V_q}(K)$ is the quantum superdimension \cite{Z92} of $V_q$
given by $\sdim_q(V_q)=\sum_{a=1}^{m+n} (-1)^{[a]}q^{(\mathcal{E}_a, 2\rho)}$.
It is known \cite{Z92} that  for the distinguished root datum,  $\sdim_q(V_q)=[m-n]_q$,
and by  Lemma \ref{rem:q-sdim} below, this is valid for any root datum of $\mathfrak{gl}_{m|n}$.
Hence $\cF_{q, \F}(\mathcal{IH})=\{0\}$, completing the proof.
\end{proof}

Now we prove the property of the quantum superdimension which was used in the proof of the proposition.
The quantum orthosymplectic supergroup is also treated here for later use.
\begin{lemma}\label{rem:q-sdim}
Let $\U_q(\fg, \phi; \F)$ be the quantum general linear supergroup or
quantum orthosymplectic supergroup, and let $V_q$ be the natural $\U_q(\fg, \phi; \F)$-module.
Then the quantum superdimension $\sdim_q(V_q)$ of $V_q$ is independent of the root datum
of $\fg$ used to define $\U_q(\fg, \phi; \F)$.
\end{lemma}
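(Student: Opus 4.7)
The plan is to analyse the explicit formula
\begin{equation*}
\sdim_q(V_q) \;=\; \mathrm{str}_{V_q}(K_\phi) \;=\; \sum_\mu (-1)^{[\mu]}\dim(V_q)_\mu\, q^{(\mu,\,2\rho_\phi)},
\end{equation*}
where the sum runs over the $\fh$-weights $\mu$ of $V_q$. Since the natural module together with the fixed Cartan $\fh$ determines the weight multiset (with multiplicities and parities) independently of $\phi$, the only root-datum dependent quantity is the graded Weyl vector $2\rho_\phi$. By a theorem of Serganova, for a basic classical Lie superalgebra the set of Borels containing $\fh$ is acted on transitively by a Weyl groupoid generated by the even Weyl group $W_0$ and the odd reflections through odd isotropic simple roots; it therefore suffices to check invariance of $\sdim_q(V_q)$ under each kind of elementary move.

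Invariance under $W_0$-conjugation is immediate from the $W_0$-stability of the weight multiset of $V_q$ and the equality $\rho_{w(\fb)}=w\rho_\fb$, via the change of variables $\mu\mapsto w^{-1}\mu$. For an odd reflection through an odd isotropic simple root $\alpha$ of $\fb$, Serganova's description gives $\Delta^+_{\fb'}=(\Delta^+_\fb\setminus\{\alpha\})\cup\{-\alpha\}$ and hence $2\rho_{\fb'}=2\rho_\fb+2\alpha$. The resulting difference is
\begin{equation*}
\sdim_q^{\fb'}(V_q)-\sdim_q^\fb(V_q)\;=\;\sum_\mu (-1)^{[\mu]}\dim(V_q)_\mu\, q^{(\mu,\,2\rho_\fb)}\bigl(q^{2(\mu,\alpha)}-1\bigr),
\end{equation*}
which is supported on the few weights $\mu$ satisfying $(\mu,\alpha)\ne 0$. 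For $\fg=\gl_{m|n}$ the only possibility $\alpha=\mathcal{E}_a-\mathcal{E}_{a+1}$ with $[a]\ne[a+1]$ leaves exactly $\mu\in\{\mathcal{E}_a,\mathcal{E}_{a+1}\}$ to contribute, while for $\fg=\osp_{m|2n}$ one may use a sign-flip in $W_0$ to reduce to $\alpha=\epsilon_i-\delta_k$, and then only $\mu\in\{\pm\epsilon_i,\pm\delta_k\}$ contribute.

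A short direct computation in each case factorises the remaining difference as a nonzero monomial in $q$ times $\bigl(q^{(\alpha,\,2\rho_\fb)}-1\bigr)$ (plus its $q\mapsto q^{-1}$ conjugate in the $\osp$ case), so the vanishing reduces to the identity $(\alpha,2\rho_\fb)=0$. This is the special case $(\alpha,\alpha)=0$ of the standard ``strange formula'' $(2\rho_\fb,\alpha_i)=(\alpha_i,\alpha_i)$ valid for every simple root $\alpha_i$ of a basic classical Lie superalgebra. The main obstacle in writing out the proof is establishing this strange formula for odd \emph{isotropic} simple roots, where the usual Weyl-reflection argument for non-isotropic roots is unavailable; the identity can either be read off from the explicit positive root systems of $\gl_{m|n}$ and $\osp_{m|2n}$ recorded in Section \ref{sect:quantum-gl} and Section \ref{sect:inv-theory-osp} (case-by-case, but uniformly across root data), or derived from a consistency constraint in Serganova's groupoid by composing odd reflections that return to the starting Borel. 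With the identity in hand the computation closes, giving $\sdim_q^{\fb'}(V_q)=\sdim_q^\fb(V_q)$ and hence the lemma.
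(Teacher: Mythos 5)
Your proof is correct and follows essentially the same strategy as the paper's: reduce the comparison of root data to a sequence of odd reflections, observe that an odd reflection through an isotropic simple root $\alpha$ shifts $2\rho$ by $\pm 2\alpha$, and deduce the invariance from the identity $(2\rho,\alpha)=0$. The paper simply recalls that identity as a known fact (its equation \eqref{eq:vanishing}) and carries out the same cancellation for the two weights (resp.\ four weights in the $\osp$ case) affected, whereas you additionally spell out the routine $W_0$-invariance step and flag the provenance of the strange-formula identity; these are refinements, not a different argument.
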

\begin{proof}
We note that any two root data of $\fg$ are related to each other by 
applying a sequence of odd reflections.
Given a root datum of $\fg$ with the set of positive roots $\Psi^+=\Psi_0^+\cup\Psi_1^+$
such that $\alpha_s$ is an isotropic odd simple root,  consider the
root datum obtained from it by the odd reflection with respect to $\alpha_s$.
If ${\Psi'}^+ = {\Psi'}_0^+\cup{\Psi'}_1^+$ is the set of positive roots of the latter,
then ${\Psi'}^+ =\left(\Psi^+\backslash\{\alpha_s\}\right)\cup\{-\alpha_s\}$.
Thus $2\rho-2\rho'=-2\alpha_s$, where $2\rho=\sum\limits_{\alpha\in\Psi^+_0}\alpha-
\sum\limits_{\gamma\in\Psi^+_1}\gamma$
and $2\rho'$ is similarly defined.  Recall that
\begin{eqnarray}\label{eq:vanishing}
(2\rho, \alpha_s)=(2\rho', \alpha_s)=0
\end{eqnarray}

For $\fg=\mathfrak{gl}_{m|n}$, there exist $i$ and $j$ such that
$\alpha_s =\varepsilon_i - \delta_j$. Then the difference between the quantum superdimensions
of $V_q$ relative to the two root data is
\[
D^+_{i j} - {D'}^+_{i j}, \quad \text{with \ } D^+_{i j}=q^{(2\rho, \epsilon_i)} - 
q^{(2\rho, \delta_j)}, \quad {D'}^+_{i j}= q^{(2\rho', \epsilon_i)} - q^{(2\rho', \delta_j)}.
\]
By \eqref{eq:vanishing}, $D^+_{i j}= q^{(2\rho, \delta_j)} \left(q^{(2\rho, \alpha_s)} - 1\right)=0$ and similarly
${D'}^+_{i j}=0$,  hence $D^+_{i j} - {D'}^+_{i j}=0$.
  If $\fg=\mathfrak{osp}_{m|2n}$, then
$\alpha_s =\varepsilon_i - \delta_j$ or $\varepsilon_i + \delta_j$ for some $i$ and $j$.
The difference between the quantum superdimensions
of $V_q$ relative to the two root data is $\cD_{i j}:=D^+_{i j} - {D'}^+_{i j}+ D^-_{i j} - {D'}^-_{i j}$
with $D^+_{i j}$ and ${D'}^+_{i j}$ as above and
\[
D^-_{i j}= q^{-(2\rho, \epsilon_i)} - q^{-(2\rho, \delta_j)}, \quad
{D'}^-_{i j}=q^{-(2\rho', \epsilon_i)} - q^{-(2\rho', \delta_j)}.
\]
Again $\cD_{i j}$ vanishes by \eqref{eq:vanishing}.
Hence $\sdim_q(V_q)$ is independent of the root datum chosen.
\end{proof}

\section{Invariant theory of the quantum orthosymplectic supergroup}\label{sect:inv-theory-osp}
Throughout this section, $\fg=\mathfrak{osp}_{m|2n}$ and $\ell=\left[\frac{m}{2}\right]$.

\subsection{Inequivalent root data of the orthosymplectic superalgebra} \label{sect:q-OSp}
The roots of  $\fg=\mathfrak{osp}_{m|2n}$ can be described as vectors in $\cE(\ell|n)$
(see Definition \ref{weight-space}) as follows:
\[
\begin{aligned}
&\mathfrak{osp}_{2 \ell+1|2n}:&& \pm\varepsilon_i, \
\pm\varepsilon_i\pm\varepsilon_{i'}\  (i\ne i'), \quad \pm2\delta_j,\
\pm\delta_j\pm\delta_{j'}\  (j\ne j'), \quad \pm \varepsilon_i\pm \delta_j; \\
&\mathfrak{osp}_{2 \ell|2n}:&&  \pm\varepsilon_i\pm\varepsilon_{i'}
\  (i\ne i'), \quad \pm2\delta_j,\  \pm\delta_j\pm\delta_{j'}\  (j\ne j'), \quad \pm \varepsilon_i\pm \delta_j.
\end{aligned}
\]
When $\ell=0$, there is no $\varepsilon_i$.
In analogy with the type $A$ case, the Weyl group conjugacy classes of
Borel subalgebras correspond bijectively
to admissible orderings ${\mathcal E}_1, {\mathcal E}_2, \dots, {\mathcal E}_{ \ell+n}$ of the basis elements
of $\cE(\ell|n)$.
For a given admissible ordering,
\\
$\bullet$ the set $\Pi_\fb$ of simple roots of $\mathfrak{osp}_{2 \ell+1|2n}$ is
\begin{eqnarray}\label{eq:odd-o}
\left\{{\mathcal E}_1 - {\mathcal E}_2, \dots, {\mathcal E}_{ \ell+n-1} - {\mathcal E}_{ \ell+n},  \  {\mathcal E}_{ \ell+n}\right\};
\end{eqnarray}
\noindent $\bullet$ the set $\Pi_\fb$ of simple roots of $\mathfrak{osp}_{2\ell|2n}$ is
\begin{eqnarray}\label{eq:even-o}
&&\left\{{\mathcal E}_1 - {\mathcal E}_2, \dots, {\mathcal E}_{ \ell+n-1} - {\mathcal E}_{ \ell+n}, \
{\mathcal E}_{ \ell+n-1} +  {\mathcal E}_{ \ell+n}\right\}, \  \text{where ${\mathcal E}_{ \ell+n}=\varepsilon_ \ell$}, \ \text{or}
\\
&& \left\{{\mathcal E}_1 - {\mathcal E}_2, \dots, {\mathcal E}_{ \ell+n-1} - {\mathcal E}_{ \ell+n}, \
2 {\mathcal E}_{ \ell+n}\right\}, \  \text{where  ${\mathcal E}_{ \ell+n-1}=\delta_{n-1}$,  ${\mathcal E}_{ \ell+n}=\delta_n$}.
\end{eqnarray}

The even subalgebra of $\fg=\mathfrak{osp}_{m|2n}$ is $\fg_{\bar0}
=\mathfrak{so}_{m}\oplus \mathfrak{sp}_{2n}$, while the odd subspace is
$\fg_{\bar 1}=\C^m\otimes \C^{2n}$. Let
$G_0=O_n(\C)\times Sp_{2n}(\C)$, whose Lie algebra is $Lie(G_0)=\fg_{\bar 0}$.
Then $G_0$ acts on $\fg$ in its adjoint representation, and we obtain a Harish-Chandra pair $(G_0, \fg)$.
Now define an involution $\sigma\in G_0$ as follows.
If $m=2\ell +1$, we let $\sigma=-1$. If $m=2\ell$, let $\sigma\in G_0$ be the element
that acts on the natural  $\fg$-module by interchanging the weight spaces
with weights $\varepsilon_{\ell}$ and $-\varepsilon_{\ell}$ and which fixes all other weight spaces.
Then $G_0$ is  generated by the connected component of the identity and $\sigma$.

Consider the action of $\sigma$  on $\fg$. In the case $m=2\ell+1$,
the action is trivial. If
$m=2\ell$, it interchanges the root spaces of
$\pm({\mathcal E_a}+\varepsilon_\ell)$ and $\pm({\mathcal E_a}-\varepsilon_\ell)$
for each ${\mathcal E_a}\ne\varepsilon_\ell$,
and for any $h^0$ in the Cartan subalgebra, $\sigma.h^0=\sigma h^0 \sigma^{-1}$ satisfies
$\sigma.h^0({\mathcal E_a})=h^0({\mathcal E_a})$ for all ${\mathcal E_a}\ne\varepsilon_\ell$,
and $\sigma.h^0(\varepsilon_\ell)=-h^0(\varepsilon_\ell)$.

The action of $\sigma$ extends to $\U(\mathfrak{osp}_{m|2n}; T)$.
Let $\widetilde{\U}(\mathfrak{osp}_{m|2n}; T)$ be the smash product of $\U(\mathfrak{osp}_{m|2n}; T)$
with the group algebra
$T\Z_2$ of $\Z_2:=\{1, \sigma\}$. Then $\widetilde{\U}(\mathfrak{osp}_{m|2n}; T)$
is a Hopf superalgebra with the usual
Hopf superalgebra structures for $\U(\mathfrak{osp}_{m|2n}; T)$
and for the group algebra. In particular,
\[
\Delta(\sigma)=\sigma\otimes\sigma, \  \  \epsilon(\sigma)=1, \  \  S(\sigma)=\sigma^{-1}.
\]
Clearly $\Delta(\sigma) C\Delta( \sigma^{-1})=C$, 
where $C$ is the quadratic
Casimir. The universal $R$-matrix $R$ and the associator $\Phi_{KZ}$ of
$\U(\mathfrak{osp}_{m|2n}; T)$ (see Section \ref{sect:uea}) satisfy
\[
(\sigma\otimes\sigma) R(\sigma^{-1}\otimes\sigma^{-1})=R, \quad (\sigma\otimes\sigma \otimes\sigma) \Phi_{KZ}(\sigma^{-1}
\otimes\sigma^{-1}\otimes\sigma^{-1})=\Phi_{KZ}.
\]
Thus we have the ribbon quasi Hopf superalgebra
\begin{eqnarray}\label{eq:Uosp}
\left(\widetilde{\U}(\mathfrak{osp}_{m|2n}; T), \Delta, \epsilon, \Phi_{KZ}, S, \alpha, \beta, R, v\right).
\end{eqnarray}

\medskip

\subsection{Invariant theory: the case $m=2\ell+1$}\label{sect:inv-odd-m}

Now we turn to the quantum orthosymplectic supergroup with $m=2\ell+1$.
Let $\Z_2=\{1, \sigma\}$  act on
$\U_q(\mathfrak{osp}_{m|2n}, \phi; T)$ by conjugation and assume that
all elements of $\U_q(\mathfrak{osp}_{m|2n}, \phi; T)$ are fixed.
Let $\widetilde{\U}_q({\mathfrak{osp}}_{m|2n}, \phi; T)$ be the smash product of
$\U_q(\mathfrak{osp}_{m|2n}, \phi; T)$ with the group algebra $T\Z_2$.  It has a natural
Hopf superalgebra structure.
Since the universal $R$-matrix $R_q$ of $\U_q(\mathfrak{osp}_{m|2n}, \phi; T)$ is unique,
we have  $(\sigma\otimes\sigma) R_q(\sigma^{-1}\otimes\sigma^{-1})=R_q$.
Thus we have the ribbon Hopf superalgebra,
\begin{eqnarray}\label{eq:Uqosp}
\left(\widetilde{\U}_q({\mathfrak{osp}}_{m|2n}, \phi; T), R_q, v_q\right),
\end{eqnarray}
where $R_q$ is the universal $R$-matrix of $\U_q(\mathfrak{osp}_{m|2n}, \phi; T)$.
We will also call this ribbon Hopf superalgebra the quantum orthosymplectic supergroup.

\subsubsection{Extending scalars to power series}

Set $\K=T$ or $T_t$. Use $\U(T)$ and $\U_q(T)$ to denote
$\widetilde{\U}(\mathfrak{osp}_{m|2n}; T)$ and
$\widetilde{\U}_q(\mathfrak{osp}_{m|2n}, \phi; T)$
respectively, and define $\U(T_t)$ and  $\U_q(T_t)$
as in equation \eqref{eq:U(R)}.
Then we have the ribbon Hopf superalgebra
$(\U_q(\K), \Delta_q, \epsilon_q, S_q, R_q, v_q)$ and
ribbon quasi Hopf superalgebra $(\U(\K), \Delta, \epsilon, \Phi_{KZ}, S, \alpha, \beta, R, v)$.

\begin{theorem}\label{thm:equiv-mod-osp} Let $\K$ be $T$ or $T_t$, and
$\fg=\mathfrak{osp}_{m|2n}$.
\begin{enumerate}
\item There is an equivalence of ribbon quasi Hopf superalgebras

$(\U_q(\K), \Delta_q, \epsilon_q, S_q, R_q, v_q)
\longrightarrow(\U(\K), \Delta, \epsilon, \Phi_{KZ}, S, \alpha, \beta, R, v)$.
\item There exists a braided tensor equivalence $\U(\K)\text{-mod}
\longrightarrow \U_q(\K)\text{-mod}$, which preserves
duality and twist.
\end{enumerate}
\end{theorem}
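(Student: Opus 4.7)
The plan is to reduce the theorem to Theorem \ref{thm:iso}, which already supplies an equivalence $(f, F, g)$ of ribbon quasi Hopf superalgebras between $\U_q(\fg, \phi; T)$ and $\U(\fg; T)$ for the simple contragredient Lie superalgebra $\fg = \osp_{m|2n}$. What remains is to enlarge this equivalence to accommodate the smash product with the group algebra $T\Z_2$. The decisive simplification available in the odd case $m = 2\ell+1$ is that $\sigma = -1$ acts trivially by conjugation both on $\U(\osp_{m|2n}; T)$ and, by the convention recorded just above the theorem, on $\U_q(\osp_{m|2n}, \phi; T)$. Consequently both smash products reduce as superalgebras to ordinary tensor products with $T\Z_2$, and the natural candidate for the extended equivalence is obtained by sending $\sigma \mapsto \sigma$ and leaving $F$ and $g$ untouched.

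Concretely, I would define $\widetilde{f}: \U_q(T) \to \U(T)$ by $\widetilde{f}|_{\U_q(\fg,\phi;T)} = f$ and $\widetilde{f}(\sigma) = \sigma$, and retain $F \in \U(\fg;T)^{\otimes 2}$ and $g \in \U(\fg;T)$ unchanged, since they sit inside the larger algebras without modification. The verification that $(\widetilde{f}, F, g)$ is still a ribbon quasi Hopf superalgebra equivalence breaks into three essentially routine checks. First, the coproducts agree on $\sigma$, namely $\Delta_q(\sigma) = \sigma \otimes \sigma = \Delta(\sigma)$, so $\widetilde{f}$ respects the coproduct once $f$ does. Second, $F$ continues to gauge $\Delta_q$ to $\Delta$ on the enlarged algebra: the identity on $\U_q(\fg,\phi;T)$ is inherited from Theorem \ref{thm:iso}, and the $\sigma$-invariance of the coefficients of $F$ (a consequence of the triviality of the $\sigma$-action on $\U(\fg;T)$) forces $F$ to commute with $\Delta(\sigma)$. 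Third, the universal $R$-matrix, associator, antipode data and twist transport correctly, for which the relevant invariance relations $(\sigma \otimes \sigma) R_q (\sigma^{-1} \otimes \sigma^{-1}) = R_q$, $(\sigma \otimes \sigma) R (\sigma^{-1} \otimes \sigma^{-1}) = R$, and the analogous statement for $\Phi_{KZ}$ recorded in Section \ref{sect:q-OSp} are exactly what is required.

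The extension to $\K = T_t$ then proceeds by scalar extension along $T \hookrightarrow T_t$, exactly as in Corollary \ref{cor:iso-1}; the smash product structure is preserved because $T\Z_2 \otimes_T T_t = T_t \Z_2$. Part (2) follows at once from part (1) by invoking Theorem \ref{thm:ribbon-cat-equiv} from the appendix, which converts any equivalence of ribbon quasi Hopf superalgebras into a braided tensor equivalence of the associated module categories preserving duality and twist.

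The genuine content is already packaged inside Theorem \ref{thm:iso}: the Etingof-Kazhdan quantisation, its compatibility with Drinfeld's double construction, and Geer's identification with the Drinfeld-Jimbo quantum supergroup. The only step here that requires any real care is the compatibility of $F$ with the coproduct of $\sigma$, and the triviality of the $\sigma$-action in the odd case reduces this to bookkeeping rather than a genuine obstacle. I expect the corresponding statement for the even case $m = 2\ell$ to be much more delicate precisely because there $\sigma$ acts non-trivially on the Cartan subalgebra, so that the equivalence must actively intertwine the two $\sigma$-actions; that is the step I would anticipate as the main obstacle should one attempt to run the present argument in the even setting.
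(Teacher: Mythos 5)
Your proposal is correct and matches the paper's own (very brief) proof: both extend the superalgebra isomorphism of Theorem \ref{thm:iso} by the identity on the group algebra $T\Z_2$, retain the gauge and antipodal transformations $F$ and $g$, and invoke Theorem \ref{thm:ribbon-cat-equiv} for part (2). The additional verifications you carry out (commutation of $F$ with $\Delta(\sigma)$, transport of $R_q$, $\Phi_{KZ}$, antipode data, and the scalar extension to $T_t$) are exactly the details the paper leaves implicit when it writes ``the theorem follows immediately,'' and all of them reduce, as you say, to the triviality of the $\sigma$-action in the odd case.
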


\begin{proof}
We extend the superalgebra isomorphism of Theorem \ref{thm:iso} to an isomorphism $\U_q(\K)\longrightarrow \U(\K)$
in such a way that it is the identity map on the group algebra $T\Z_2$. Then the theorem follows immediately
\end{proof}

\medskip

The natural module $V$ for $\mathfrak{osp}_{m|2n}$ over $\C$ admits a
non-degenerate bilinear form $(\ , \ ): V\times V\longrightarrow \C$,
which is
\begin{itemize}
\item even and supersymmetric, i.e.,
for $v_i, w_i\in V_{\bar i}$ with $i=0, 1$,
\[
(v_0, w_1)=(v_1, w_0)=0, \quad
(v_0, w_0)=(w_0, v_0), \quad (v_1, w_1)= -(w_1, v_1);
\]
\item and contravariant, i.e., for all $v, w\in V$,
\begin{eqnarray}\label{eq:contravariance}
(x v, w) = (-1)^{[v][x]} (v, S(x) w), \quad x\in \U(\mathfrak{osp}_{m|2n}; \C).
\end{eqnarray}
\end{itemize}
Given the action of $\Z_2=\{1, \sigma\}$ on $V$ defined earlier,
the form is also contravariant with respect to $\widetilde{\U}(\mathfrak{osp}_{m|2n}; \C)$,
the smash product
of $\U(\mathfrak{osp}_{m|2n}; \C)$ and $\C\Z_2$.

Now  $V_\K=V\otimes_\C\K$  has the structure of a module for the ribbon quasi Hopf superalgebra
$\left(\U(\K), \Delta, \epsilon, \Phi_{KZ}, S, \alpha, \beta, R, v\right)$
(see \eqref{eq:Uosp}).  We extend the form on $V$ $\K$-bilinearly to obtain a form on $V_\K$.
The non-degeneracy and contravariance of this form enables us to define the
$\U(\K)$-module isomorphism
\[
\kappa: V_\K\longrightarrow V_\K^*, \quad \kappa(v)(\alpha w)=(v, w),\  \forall v, w\in V_\K,
\]
where $\alpha$ is involved for the same reasons as it entered in \eqref{eq:ev+exp}.
Recall from \eqref{eq:beta} that we may choose $\alpha=1$.

Let $\cI: \U(\K)\text{-mod}\longrightarrow \U_q(\K)\text{-mod}$ be
the equivalence of categories in Theorem \ref{thm:equiv-mod-osp}(2),
and continue to denote $\cI(V_\K)$ and $\cI(V_\K^*)$ by $V_\K$ and $V_\K^*$
respectively.  Denote by $\cT(\K)$ the full subcategory of $\U(\K)\text{-mod}$
with objects of the form
$\underbrace{V_\K \boxtimes V_\K \boxtimes\dots \boxtimes V_\K}_k$ for all $k$.
Then we have the full subcategory $\cT_q(\K)$ of
$\U_q(\K)\text{-mod}$ with tensor powers $V_\K \otimes V_\K \otimes \dots \otimes _\K$
of $V_\K$ as objects.
The same arguments used in the proof of Lemma \ref{lem:functors-gl-1}
suffice to prove the following result.
\begin{lemma}\label{lem:equiv-functors-1}
Both $\cT(\K)$  and $\cT_q(\K)$ are ribbon categories,
and one has a braided tensor equivalence
$
\cT(\K)\stackrel{\sim}{\longrightarrow} \cT_q(\K)
$
which preserves duality and twist.
\end{lemma}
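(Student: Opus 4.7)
The plan is to mimic the argument used for Lemma \ref{lem:functors-gl-1}, with the simplification that in the orthosymplectic setting the natural module is self-dual, so the object sets of $\cT(\K)$ and $\cT_q(\K)$ consist merely of tensor powers of $V_\K$ rather than mixed tensor products involving $V_\K^*$.

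First, I would check that $\cT(\K)$ and $\cT_q(\K)$ are honest ribbon subcategories of $\U(\K)\text{-mod}$ and $\U_q(\K)\text{-mod}$ respectively. Closure under $\boxtimes$ (resp.\ $\otimes$) is immediate from the definition. The key closure condition is under duality: in the classical case, the isomorphism $\kappa: V_\K\to V_\K^*$ coming from the non-degenerate contravariant bilinear form on $V$ shows that $V_\K^*$ lies (up to isomorphism) in $\cT(\K)$, and hence that $\cT(\K)$ inherits a rigid structure from the ambient ribbon category. Applying the equivalence $\cI$ of Theorem \ref{thm:equiv-mod-osp}(2), we transport $\kappa$ to an isomorphism $\cI(V_\K)\to \cI(V_\K^*)$, which shows that $\cT_q(\K)$ is also closed under duality. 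Braiding and twist on each subcategory are inherited from the ribbon structure on the ambient module category.

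Second, I would apply Theorem \ref{thm:equiv-mod-osp}(2) directly. By construction the functor $\cI$ is a braided tensor equivalence preserving duality and twist, and by its very definition it sends $V_\K$ to an object of $\cT_q(\K)$ and $V_\K^{\boxtimes k}$ to $\cI(V_\K)^{\otimes k}$ up to the coherence isomorphisms of the braided tensor functor. Hence the restriction of $\cI$ to $\cT(\K)$ lands in (a subcategory equivalent to) $\cT_q(\K)$, and it is essentially surjective on the nose. Fullness and faithfulness are inherited from $\cI$ on the ambient categories. The resulting functor is automatically a braided tensor equivalence preserving duality and twist because $\cI$ has these properties and $\cT(\K), \cT_q(\K)$ are full ribbon subcategories.

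The only subtlety, which I regard as the main thing to verify carefully rather than a genuine obstacle, is that $V_\K^*$ is genuinely represented in $\cT(\K)$ up to isomorphism (so that duals of objects of $\cT(\K)$ remain in $\cT(\K)$) — this relies on $\kappa$ being a $\U(\K)$-module isomorphism, which is guaranteed by contravariance \eqref{eq:contravariance} together with the choice $\alpha=1$ from \eqref{eq:beta}. Once this is in place, the argument is purely formal and parallels word-for-word the proof of Lemma \ref{lem:functors-gl-1}.
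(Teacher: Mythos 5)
Your proposal is correct and follows the same route as the paper: the paper's proof simply observes that the arguments from Lemma \ref{lem:functors-gl-1} apply verbatim, i.e.\ $\cI$ induces a bijection on objects between $\cT(\K)$ and $\cT_q(\K)$ and the desired equivalence is its restriction, with duality and twist inherited from Theorem \ref{thm:equiv-mod-osp}(2). Your extra care about self-duality via $\kappa$ (needed because only tensor powers of $V_\K$ appear, unlike the mixed tensors in the $\gl_{m|n}$ case) is exactly the point the paper implicitly relies on when asserting the first claim is clear.
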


The map $\kappa_q=\cI(\kappa): V_\K\longrightarrow V_\K^*$ is an isomorphism of
$\U_q(\K)$-modules. We now define a $\K$-bilinear form  on $V_\K$ by
\begin{eqnarray}
(  \  ,  \ )_q: V_\K\times V_\K\longrightarrow\K, \quad \kappa_q(v)(w)=(v, w)_q,\  \forall v, w\in V_\K.
\end{eqnarray}
This form is $\U_q(\K)$-contravariant, that is,
$(x v, w)_q = (-1)^{[v][x]} (v, S_q(x) w)_q$ for all $v, w\in V_\K$ and $x\in\U_q(\K)$.
Furthermore,
$
(v, w)_q = (-1)^{[v][w]} (w, K_{2\rho} v)_q,
$
where $K_{2\rho}$ is the product of $k_i$ such that $K_{2\rho} e_i K_{2\rho}^{-1}=q^{(2\rho, \alpha_i)}e_i$
for all $i$, and hence $S^2(x)=K_{2\rho}xK_{2\rho}^{-1}$ for all $x\in\U_q(\K)$.

Let $\cH'(\K)$ be the category of non-directed ribbon graphs (cf. Definition \ref{def:ribbon-nondirect}).
Applying Theorem \ref{thm:RT-1}, we obtain braided tensor functors
$
\cF_\K: \cH'(\K)\longrightarrow \cT(\K)$ and $
\cF_{q, \K}: \cH'(\K)\longrightarrow \cT_q(\K),
$
which preserve dualities and twists, and each of which is unique.
Then both $\cF_\K(\cH'(\K))$ and $\cF_{q, \K}(\cH'(\K))$ are
ribbon categories, and it follows Theorem \ref{thm:equiv-mod-osp} and the uniqueness
of the braided tensor functors that there exists a braided tensor equivalence
\begin{eqnarray}\label{eq:equal-images}
\cF_\K(\cH'(\K)) \stackrel{\sim}{\longrightarrow}  \cF_{q, \K}(\cH'(\K))
\end{eqnarray}
which preserves duality and twist.
We have the following result.
\begin{theorem} \label{thm:main1-osp}
The  braided tensor functors
$
\cF_\K: \cH'(\K)\longrightarrow \cT(\K)$
and $
\cF_{q, \K}: \cH'(\K)\longrightarrow \cT_q(\K)
$ are both full.
\end{theorem}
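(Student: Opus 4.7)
The plan is to imitate the proof of Theorem \ref{thm:main1}, with the additional complication that $V \otimes V$ decomposes into \emph{three} $\U(\mathfrak{osp}_{m|2n}; \C)$-irreducibles rather than two, so the braid alone does not suffice to recover the transposition. The missing information is supplied by the cup--cap morphisms available in $\cH'(\K)$. By the braided tensor equivalence \eqref{eq:equal-images} it is enough to prove $\cF_\K$ is full, and by the dualities of $\cH'(\K)$ and $\cT(\K)$ this reduces to the assertion that $\cF_\K$ surjects onto $\End_{\cT(\K)}(V_\K^{\boxtimes r})$ for each $r$. Exactness of $-\otimes_\C\K$ gives $\End_{\cT(\K)}(V_\K^{\boxtimes r}) \cong \End_{\U(\mathfrak{osp}_{m|2n};\,\C)}(V^{\otimes r})\otimes_\C\K$, and the classical first fundamental theorem for $\mathfrak{osp}_{m|2n}$ \cite{DLZ, LZ14a, LZ14b} identifies the right-hand tensor factor with the image of the Brauer algebra $B_r(m-2n)$, generated by the graded transpositions $\tau_i$ (acting by \eqref{eq:Sym}) together with the Brauer contractions $e_i$ built from the invariant form on $V$. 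The extra factor $\Z_2=\{1,\sigma\}$ in $\widetilde{\U}$ is harmless, since for $m=2\ell+1$ one has $\sigma=-1$ acting on $V^{\otimes r}$ by the scalar $(-1)^r$.

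Next, let $b$, $g=\tau\circ b$, and $\hat e$ denote respectively the actions on $V_\K\boxtimes V_\K$ of the universal $R$-matrix, the braiding, and the cup--cap composition on two adjacent strands. Decomposing $V\otimes V$ into the eigenspaces of the quadratic Casimir $C$, with projectors $P_s,P_a,P_0$ onto the graded traceless-symmetric, the graded antisymmetric, and the trivial summand respectively, and with $C$-eigenvalues $\chi_s,\chi_a,\chi_0$, one finds
\[
b = e^{t\chi_s/2}P_s + e^{t\chi_a/2}P_a + e^{t\chi_0/2}P_0, \qquad g = e^{t\chi_s/2}P_s - e^{t\chi_a/2}P_a + e^{t\chi_0/2}P_0,
\]
the sign change coming from $\tau$ acting as $-\id$ on the antisymmetric summand, while $\hat e=\mu P_0$ for a scalar $\mu\in\K$ proportional to the quantum superdimension of $V_\K$, which is invertible after passing to $T_t$ when needed. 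The $3\times 3$ system expressing $(g, g^{-1}, \hat e)$ in terms of $(P_s,P_a,P_0)$ has non-singular determinant, so $P_s$, $P_a$, $P_0$ and hence $\tau=P_s-P_a+P_0$ are explicit $\K$-linear combinations of $g$, $g^{-1}$, $\hat e$ and $\id$. Translating to $V_\K^{\boxtimes r}$, the local operators $\hat\tau_i$ and $\hat e_i$ all lie in $\cF_\K(\cH'(\K))$.

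Finally, the passage from $\boxtimes$ to $\otimes$ is handled exactly as in \eqref{eq:tau-interate} and \eqref{eq:Q-interate}: the associator $\Phi_{KZ}$ produces corrections of strictly positive $t$-order, and the same iterative argument yields expressions for $\tau_i$, $e_i$, and $tQ_i$ as $t$-adic power series in the $\hat\tau_j$, $\hat e_j$, and $b_j$. Since these latter quantities are all images of ribbon graphs, and since by the classical FFT the operators $\tau_i$ and $e_i$ generate $\End_{\U(\mathfrak{osp}_{m|2n};\,\C)}(V^{\otimes r})$, the inclusion $\cF_\K(\cH'(\K))\supseteq\End_{\cT(\K)}(V_\K^{\boxtimes r})$ follows, and equality is then automatic. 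The main technical point I expect to require care is the contraction step of this iteration: one must verify that the associator commutes with the cup--cap morphism up to terms of positive $t$-order, which should follow from $\U(\fg)$-invariance of the Brauer contraction together with the construction of $\Phi_{KZ}$ from $C$ (so that at $t=0$ the associator degenerates to $1\otimes 1\otimes 1$ and all obstructions lie in $t\,\U(\fg;T)^{\otimes 3}$).
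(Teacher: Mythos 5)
Your proposal follows the same overall strategy as the paper's proof: reduce to surjectivity onto $\End_{\U(\C)}(V^{\otimes r})$, use the classical FFT for $\osp_{m|2n}$ to identify this algebra as the image of the Brauer algebra $B_r(m-2n,\C)$ generated by the $\tau_i$ and the contractions $E_i$, recover these from $g=\tau b$ and the cup--cap morphism, and then pass from $\boxtimes$ to $\otimes$ by inverting associator corrections $t$-adically. The differences are at the level of the linear algebra used to recover $\tau$, and there is one claim in your write-up that is slightly off.

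Your assertion that ``the braid alone does not suffice to recover the transposition'' is incorrect for the situation of this theorem. Since $m=2\ell+1$ is odd one always has $m\neq 2n$, the braiding $g$ acts on $V_\K\boxtimes V_\K$ with three \emph{distinct} eigenvalues $q$, $-q^{-1}$, $q^{-m+2n+1}$, and $\tau$ is the element with spectrum $+1,-1,+1$ on the respective eigenspaces; by Lagrange interpolation $\tau$ is a degree-two polynomial in $g$. This is exactly what the paper does, writing out the three interpolation terms explicitly. Your $3\times3$ system in $(g,g^{-1},\hat e)$ also works here -- its determinant $-\mu(q^2-q^{-2})$ is nonzero because $\mu=\sdim_q V_q\neq 0$ when $m\neq 2n$ -- but it is a detour: $\hat e$ is not needed to recover $\tau$, only to produce the Brauer contraction $E_i$ as a generator of the endomorphism algebra. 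Moreover your remark that $\mu$ is ``invertible after passing to $T_t$ when needed'' is misleading: $\mu$ is either a unit in $T$ (when $m\neq 2n$) or identically zero (when $m=2n$); extending scalars to $T_t$ never rescues the case $m=2n$. That case is not relevant to odd $m$, but the paper handles it for use in the even-$m$ proposition by noting that when $m=2n$ the eigenvalue $\chi_0$ collapses onto $\chi_{\tilde s}$ and the two-eigenvalue formulas \eqref{eq:eigen-b}--\eqref{eq:eigen-tau} apply unchanged.

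Your treatment of the $\boxtimes\to\otimes$ step is in the right spirit but vaguer than the paper. The paper does not need a separate ``commutation of associator with cup--cap up to positive $t$-order'' lemma: since $E=\cF_\K(U)\cF_\K(\Omega)$ is a morphism of ribbon graphs, $\tilde E_i$ automatically lies in $\cF_\K(\cH'{}_r^r(\K))$, and the correction $E_i-\tilde E_i$ is again a $t$-adic series in the $\tau_j$ and $tQ_j$ because $\Phi_{KZ}$ is built from the quadratic Casimirs; the iteration that proves statement \eqref{quote} for $\tau_i$ and $tQ_i$ therefore simultaneously shows $E_i\in\cF_\K(\cH'{}_r^r(\K))$. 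No new commutation estimate is required.
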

\begin{proof}
We prove the theorem by adapting the proof of Theorem \ref{thm:main1}; the notation used
in that proof will be retained here.  Note that
the arguments up to \eqref{eq:tensoring} are all applicable here
if we replace $\cH_r^r(\K)$ by ${\cH'}_r^r(\K)=\Hom_{\cH'(\K)}(r, r)$,
and $\U(\mathfrak{gl}_{m|n}; \C)$ by $\U(\C)=\widetilde{\U}(\mathfrak{osp}_{m|2n}; \C)$.

Let $G$ denote the Harish-Chandra super pair $(O_m\times Sp_{2n}, \mathfrak{osp}_{m|2n})$.
The endomorphism algebra $\End_G(V^{\otimes_\C r})$ is given by the FFT of invariant theory
of the orthosymplectic supergroup \cite{DLZ, LZ14a}. One sees readily that
$\End_{\U(\C)}(V^{\otimes_\C r})= \End_G(V^{\otimes_\C r})$.

Let $\hat{c}_0: V\otimes V \longrightarrow \C$ and $\check{c}_0: \C\longrightarrow V\otimes V$ be maps
respectively defined by $\hat{c}_0(v, w)= (v, w)$ and
$(\id\otimes \hat{c}_0)(\check{c}_0(1)\otimes v)=v$  for all $v, w\in V$.
Then $E=\check{c}_0\circ\hat{c}_0\in \End_{G}(V\otimes V)$.
Define the following maps,
\[
E_i =  \underbrace{\id_V\otimes\dots\otimes\id_V}_{i-1}\otimes E \otimes
\underbrace{\id_V\otimes\dots\otimes\id_V}_{r-i-1}, \quad i=1, 2, \dots, r-1,
\]
which belong to $\End_{G}(V^{\otimes r})$.
Let  $B_r(m-2n, \C)$ be the complex Brauer algebra of degree $r$ with parameter $m-2n$.
Then the maps $E_i$ together
with the representation of $\Sym_r$ defined by \eqref{eq:Sym}
generate a representation $\nu_r$ of $B_r(m-2n, \C)$ on $V^{\otimes r}$.
The first fundamental theorem of invariant theory for
the orthosymplectic supergroup states that \cite{DLZ, LZ14a} $\nu_r$
surjects onto $\End_{G}(V^{\otimes r})$.
Denote by $B_r(m-2n, \K)$ the Brauer algebra over $\K$, and extend
this representation $\K$-linearly to $\nu_r: B_r(m-2n, \K)
\longrightarrow\End_{\U(\K)}(V_\K^{\otimes_\K r})$.
It then follows from the analogue of \eqref{eq:tensoring} that $\nu_r$ is surjective.
Thus in order to prove the theorem,
we only need to to show that $\tau_i, E_i\in \cF_\K({\cH'}_r^r(\K))$ for all $i$,
where ${\cH'}_r^r(\K)=\Hom_{\cH'(\K)}(r, r)$.

Assume that the analogue of the statement \eqref{quote} remains valid in the present case.
Then  $\tau_i\in \cF_\K({\cH'}_r^r(\K))$.
Now $E= \cF_\K(U) \cF_\K(\Omega)$, thus
$\tilde{E}_i:= \id_{V_\K^{\boxtimes(i-1)}}\boxtimes E \boxtimes \id_{V_\K^{\boxtimes(r-i-1)}}$
belongs to $\cF_\K({\cH'}_r^r(\K))$ for all $i$.
These endomorphisms involve only $E$ and the associator. Since the associator involves only
quadratic Casimirs, it follows from the statement \eqref{quote} that $E_i\in \cF_\K({\cH'}_r^r(\K))$.

Now we prove  the analogue of the statement \eqref{quote} in the present case.
If $m=2n$, equations \eqref{eq:eigen-b} and \eqref{eq:eigen-tau}
are all valid, and hence the proof of \eqref{quote} still goes through.  If $m\ne 2n$,
the tensor square of the natural module $V$ of $\U(\C)$ decomposes into the direct sum of three irreducibles
$ L(\tilde s)$, $L(a)$ and $L(0)$, where $L(a)$ is the $\Z_2$-graded skew symmetric rank-2 tensor of $V$.
Let $P[\tilde s]$, $P[a]$ and $P[0]$ be the idempotents mapping $V\otimes V$ onto the respective simple modules.
Then $\tau =  P[\tilde s]-P[a]+P[0]$.
The eigenvalue of $C$ on each $L(\psi)$ is given by
$\chi_\psi= \omega_{ L(\psi)}/2-\omega_{V}$, which
were computed in \cite{ZGB91b} and \cite[\S C.1]{Z92a}, and we have
\begin{eqnarray}\label{eq:eigen-Casimir}
\chi_{\tilde s}=-\chi_a=1,  \quad \chi_0= -m+2n+1.
\end{eqnarray}
Let $q=\exp(t/2)$. Then
\[
\begin{aligned}
b= qP[\tilde s]  +  q^{-1}P[a] + q^{-m+2n+1}P[0],  \\
g= qP[\tilde s]  -  q^{-1}P[a] + q^{-m+2n+1}P[0].
\end{aligned}
\]
We can express the idempotents as polynomials of $g$ in the standard fashion, and this in turn leads to
\[
\begin{aligned}
\tau= &\frac{(g+q^{-1})(g-q^{-m+2n+1})}{(q+q^{-1})(q-q^{-m+2n+1})}-  \frac{(g-q)(g-q^{-m+2n+1})}{(q+q^{-1})(q^{-1}+q^{-m+2n+1})}\\
&+\frac{(g+q^{-1})(g-q)}{(q^{-m+2n+1}+q^{-1})(q^{-m+2n+1}-q)}.
\end{aligned}
\]
Since all the $g_i$ belong to $\cF_\K({\cH'}_r^r(\K))$ by Theorem \ref{thm:RT-1},
we conclude that all $\hat\tau_i$ and $b_i$ also belong to  $\cF_\K({\cH'}_r^r(\K))$.
Now the same arguments as those following equation \eqref{eq:gbinv}
show that the statement \eqref{quote}  remains true in the present case.

This completes the proof.
\end{proof}

\subsubsection{Extending scalars to the field $\C(q)$ of rational functions}
Let $\F=\C(q)$. Denote by $\U_q(\F)$ the Jimbo version of the
quantum orthosymplectic supergroup over $\F$, which is generated by
by $k_i^{\pm 1}, e_i f_i$ ($i=1, 2, \dots, \ell+n$)
and $\sigma$ subject to the same relations as those over $T$.
Then  $\U_q(\F)\text{-Mod}_{f, \1}$
has a braiding arising from a functorial $R$-matrix $R_{W, W'}$
for any pair of objects $W$ and $W'$.
There also exists a family of functorial isomorphisms $v_W: W\longrightarrow W$
which gives rise to a twist. Thus $\U_q(\F)\text{-Mod}_{f,\1}$ is a ribbon category.

Let $V_q$ be the natural $\U_q(\F)$-module, which is self-dual, thus admits
a non-degenerate contravariant bilinear. We require the form coincide
with the bilinear form \eqref{eq:contravariance} upon specialising $V_q$ to $T_t$.

\begin{definition}\label{def:Tq-osp}
The category $\cT_q(\F)$ of tensor modules for $\U_q(\F)$ is the full subcategory
of $\U_q(\F)\text{-Mod}_{f,\1}$ with objects  $V_q^{\otimes k}$ for all $k\in\Z_+$.
This is a ribbon category.
\end{definition}

Applying Theorem \ref{thm:RT-1}  in the present context, we obtain the
unique braided tensor functor  $\cF_{q, \F}: \cH'(\F)\longrightarrow \cT_q(\F)$.
We have the following result.
\begin{theorem}[FFT for the quantum supergroup of $\osp_{2\ell+1|2n}$]\label{thm:main2-osp}
The braided tensor functor $\cF_{q, \F}: \cH'(\F)\longrightarrow \cT_q(\F)$
is full and preserves duality and twist.
\end{theorem}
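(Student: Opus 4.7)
The plan is to mirror the proof of Theorem \ref{thm:main2-gl} for the quantum general linear supergroup, transplanting it to the non-directed ribbon graph setting $\cH'(\F)$ which is appropriate here because the natural module $V_q$ is self-dual. The preservation of duality and twist is immediate: by construction (Theorem \ref{thm:RT-1}), the functor $\cF_{q, \F}$ is the unique braided tensor functor from $\cH'(\F)$ to $\cT_q(\F)$ sending the single object of $\cH'(\F)$ to $V_q$, and any such functor automatically preserves duality and twist. Thus the content is the fullness assertion. Because of the left/right dualities in the ribbon categories $\cH'(\F)$ and $\cT_q(\F)$, fullness will follow once we establish
\[
\cF_{q, \F}({\cH'}_r^r(\F)) = \End_{\U_q(\F)}(V_q^{\otimes r}) \quad \text{for all } r \ge 0,
\]
where ${\cH'}_r^r(\F)=\Hom_{\cH'(\F)}(r,r)$.

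To prove this identity, I would extend scalars from $\F = \C(q)$ to $\K = T_t$ via $q \mapsto \exp(t/2)$, exactly as in the proof of Theorem \ref{thm:main2-gl}. The compatibility of the contravariant form on $V_q$ with the form on $V_\K$ (guaranteed by the definition just before Definition \ref{def:Tq-osp}) ensures that $V_q \otimes_\F \K \cong V_\K$ as modules for the natural embedding $\U_q(\F)\otimes_\F \K \hookrightarrow \U_q(\K)$, and that the $\sigma$-action on $V_q$ matches that on $V_\K$ after specialisation. Since $\U_q(\F)\otimes_\F\K$ is dense in $\U_q(\K)$ in the $t$-adic topology, one obtains the two key isomorphisms
\begin{align*}
\End_{\U_q(\F)}(V_q^{\otimes r}) \otimes_\F \K &\;\cong\; \End_{\U_q(\K)}(V_\K^{\otimes r}), \\
\cF_{q, \F}({\cH'}_r^r(\F)) \otimes_\F \K &\;\cong\; \cF_{q, \K}({\cH'}_r^r(\K)),
\end{align*}
the second holding because the generators of $\cH'(\F)$ and $\cH'(\K)$ are mapped compatibly under the functors $\cF_{q,\F}$ and $\cF_{q,\K}$. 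Note that $\cF_{q, \F}({\cH'}_r^r(\F))$ sits naturally inside $\End_{\U_q(\F)}(V_q^{\otimes r})$, and tensoring with $\K$ is exact.

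By Theorem \ref{thm:main1-osp}, the functor $\cF_{q, \K}: \cH'(\K) \to \cT_q(\K)$ is full, so
\[
\cF_{q, \K}({\cH'}_r^r(\K)) = \End_{\U_q(\K)}(V_\K^{\otimes r}).
\]
Combining this with the two displayed isomorphisms gives
\[
\dim_\F \cF_{q, \F}({\cH'}_r^r(\F)) = \dim_\F \End_{\U_q(\F)}(V_q^{\otimes r}),
\]
which forces the desired equality since the former is a subspace of the latter. Fullness of $\cF_{q, \F}$ then follows by standard diagrammatic manipulations using the duality maps in $\cH'(\F)$ and $\cT_q(\F)$ to translate between $\Hom$ spaces and endomorphism spaces.

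The most delicate step, which I would expect to be the main obstacle, is verifying the density statement and the identification $V_q \otimes_\F \K \cong V_\K$ as modules over the smash product $\widetilde\U_q(\K)$ (rather than merely over $\U_q(\K)$). This requires checking that the $\Z_2$-action by $\sigma$ is compatible under specialisation, and that the $R$-matrix and ribbon element of $\widetilde\U_q(\F)$ acting on tensor powers of $V_q$ reduce correctly to those of $\widetilde\U_q(\K)$ on $V_\K^{\otimes r}$. Once this is set up, the dimension-count argument goes through essentially verbatim from the $\gl_{m|n}$ case.
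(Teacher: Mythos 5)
Your proposal is correct and is essentially identical to the paper's approach: the paper's proof of Theorem \ref{thm:main2-osp} consists of a single sentence invoking the same scalar-extension and dimension-count argument used for Theorem \ref{thm:main2-gl}, together with Theorem \ref{thm:main1-osp} in place of Theorem \ref{thm:main1}. Your additional remarks on the $\Z_2$-action and self-duality are sensible elaborations of points the paper leaves implicit.
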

\begin{proof}
The same arguments in the proof of Theorem \ref{thm:main2-gl} apply here.
\end{proof}

\subsection{Invariant theory: the case $m=2\ell$}\label{sect:inv-even-m}

In this case, the action of $\Z_2=\{1, \sigma\}$ on $\osp_{m|2n}$ may send
simple root vectors to non-simple ones. As $\U_q(\mathfrak{osp}_{m|2n}, \phi; T)$ is defined by
generators and relations involving only the simple roots, there is no natural way to extend the action
of $\Z_2$ to the quantum supergroup, so we can not modify the quantum supergroup
as we have done when $m$ is odd (however, see Remark \ref{rem:even-osp-reflection}).
Therefore,  we let $\U(\K)$, $\U_q(\K)$, and $\U_q(\F)$ respectively denote
$\U({\mathfrak{osp}}_{m|2n}; \K)$, $\U_q({\mathfrak{osp}}_{m|2n}, \phi; \K)$
and $\U_q({\mathfrak{osp}}_{m|2n}, \phi; \F)$ for $\K=T$ or $T_t$, and $\F=\C(q)$,
and retain the notation of Section \ref{sect:inv-odd-m}.
Then Theorem \ref{thm:RT-1} yields unique braided tensor functors
\begin{eqnarray}\label{eq:even-osp-functors}
\begin{aligned}
&\cF_\K: \cH'(\K)\longrightarrow \cT(\K), \quad
\cF_{q, \K}: \cH'(\K)\longrightarrow \cT_q(\K), \\
&\cF_{q, \F}: \cH'(\F)\longrightarrow \cT_q(\F).
\end{aligned}
\end{eqnarray}
Clearly Theorem \ref{thm:equiv-mod-osp} holds in the present case, and it follows that
Lemma \ref{lem:equiv-functors-1} and equation \eqref{eq:equal-images} remain valid.
This enables one to obtain the following result.
\begin{proposition} \label{prop:main-even-osp}
For any nonnegative integers $r$ and $s$ such that $r+s<m(2n+1)$,
\begin{eqnarray}\label{eq:main-even-osp}
\cF_{q, \K}({\cH'}_r^s(\F))=\Hom_{\U_q(\F)}(V_q^{\otimes r}, V_q^{\otimes r}).
\end{eqnarray}
\end{proposition}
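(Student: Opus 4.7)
The plan is to parallel the proofs of Theorem \ref{thm:main1-osp} and Theorem \ref{thm:main2-osp}, but to replace the classical input from the FFT for the full orthosymplectic super group (which required the involution $\sigma$) by the FFT for the \emph{connected} Harish-Chandra pair $(SO_m\times Sp_{2n},\mathfrak{osp}_{m|2n})$. It is exactly this substitution that forces the range restriction $r+s<m(2n+1)=2\ell(2n+1)$.

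First, by the scalar-extension/dimension-count argument used in the proof of Theorem \ref{thm:main2-gl} (specialising $\F\to T_t$ via $q\mapsto \exp(t/2)$ and using the Hom-space analogues of \eqref{eq:End-End} and \eqref{eq:cH-cH}), it suffices to prove the equality
\[
\cF_\K({\cH'}_r^s(\K))=\Hom_{\U(\K)}(V_\K^{\otimes r},V_\K^{\otimes s})
\]
over $\K=T_t$ in the same range, and then pass through the braided tensor equivalence of Lemma \ref{lem:equiv-functors-1} (which still holds in the present set-up by Theorem \ref{thm:equiv-mod-osp}) and the uniqueness clause of Theorem \ref{thm:RT-1} to transport the equality to $\cF_{q,\K}$ and then to $\cF_{q,\F}$.

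Second, by exactness of $-\otimes_\C\K$ the $\K$-Hom space is obtained by scalar extension from $\Hom_{\U(\osp_{m|2n};\C)}(V^{\otimes r},V^{\otimes s})$. Since $\sigma$ is now unavailable, this is the space of invariants for the \emph{connected} supergroup $(SO_m\times Sp_{2n},\osp_{m|2n})$. The classical FFT for this pair \cite{DLZ, LZ14a, LZ14b, LZ15} asserts precisely that in the range $r+s<m(2n+1)$ the connected-group invariants coincide with the $O_m\times Sp_{2n}$-invariants, and are therefore spanned by the images of Brauer diagrams under the representation $\nu_{r,s}$ constructed from the graded permutation $\tau$ on tensor factors and the contraction/coevaluation operators $E_i$ coming from the invariant bilinear form. (Outside this range one picks up additional Pfaffian-type $SO$-invariants which are not in the image of the Brauer category.)

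Third, the key step of Theorem \ref{thm:main1-osp} transfers without change: the quadratic Casimir $C$ acts on $V\boxtimes V$ through the three eigenvalues $1,-1,-m+2n+1$ of \eqref{eq:eigen-Casimir}, which does not use the parity of $m$, so the polynomial identity expressing $\tau$ in terms of the braiding $g$ remains valid, and one deduces as before that each $\hat\tau_i,b_i\in\cF_\K({\cH'}_r^r(\K))$ and, via the iteration \eqref{eq:tau-interate}--\eqref{eq:Q-interate}, that the analogue of \eqref{quote} holds. Combined with $E=\cF_\K(U)\cF_\K(\Omega)$ this places every Brauer diagram from $[r]$ to $[s]$ inside $\cF_\K({\cH'}_r^s(\K))$, establishing \eqref{eq:main-even-osp}.

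The principal obstacle is purely on the classical side: the restriction $r+s<2\ell(2n+1)$ is not an artefact of the deformation-theoretic passage but reflects the sharp range in the FFT for the connected orthosymplectic supergroup. All of the quantum-to-classical machinery (Theorem \ref{thm:equiv-mod-osp}, the expression of $\tau_i$ and $E_i$ in terms of ribbon generators, and the specialisation $\F\to T_t$) operates identically to the $m$-odd case; what one must do carefully is extract from \cite{LZ14b, LZ15} the Hom-space (not merely the endomorphism) form of the FFT in precisely this range, and verify that the surjectivity of the Brauer representation onto $\Hom_{SO_m\times Sp_{2n}}(V^{\otimes r},V^{\otimes s})$ holds for all $r+s<2\ell(2n+1)$.
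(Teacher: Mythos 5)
Your proposal is correct and follows essentially the same route as the paper: cite the classical result of \cite[Corollary 5.7]{LZ15} (a Brauer-algebra surjection for the Lie superalgebra $\osp(m|2n;\C)$ in a bounded range), observe that when $m$ is even the Lie-superalgebra invariants coincide with those of the connected Harish--Chandra pair rather than of $O_m\times Sp_{2n}$ (whence the range restriction), transport via Theorem \ref{thm:equiv-mod-osp}, Lemma \ref{lem:equiv-functors-1} and \eqref{eq:equal-images} to $\U_q(\K)\text{-mod}$, specialise $\F\to T_t$ as in Theorem \ref{thm:main2-gl}, and re-use the eigenvalue calculation from Theorem \ref{thm:main1-osp}. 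The one procedural difference is that the paper first proves the endomorphism-algebra statement $\cF_{q,\F}({\cH'}_k^k(\F))=\End_{\U_q(\F)}(V_q^{\otimes k})$ for $k<m(2n+1)/2$ and then obtains the general $(r,s)$ case at the very end via the ribbon-category dualities, whereas you carry Hom spaces $\Hom(V^{\otimes r},V^{\otimes s})$ throughout and correctly flag that this requires extracting a Hom-space (rather than endomorphism-algebra) form of the classical FFT; the two are interchangeable by exactly the duality move the paper invokes, so neither buys anything the other lacks.
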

\begin{proof}
Recall from \cite{LZ14b, LZ15} (see  \cite[Corollary 5.7]{LZ15} in particular) that for any nonnegative integer $k<\frac{m(2n+1)}{2}$,
the endomorphism algebra of the
$k$-th tensor power of the natural module of $\osp(m|2n;\C)$ is a quotient of the Brauer algebra of degree $k$ with
parameter $m-2n$. This leads to $\cF_\K({\cH'}_k^k(\K))= \End_{\U(\K)}(V_\K^{\boxtimes_\K k})$ by similar arguments as
those used in the proof of Theorem \ref{thm:main1-osp}.
Then by using Lemma \ref{lem:equiv-functors-1} and equation \eqref{eq:equal-images} for even $m$,
we obtain $\cF_{q, \K}({\cH'}_k^k(\K))=\End_{\U_q(\K)}(V_\K^{\otimes_\K k})$.  Now we can use the same reasoning
as that in the proof of Theorem \ref{thm:main2-gl} to show that
\[
\cF_{q, \F}({\cH'}_k^k(\F))=\End_{\U_q(\F)}(V_q^{\otimes k}).
\]
This immediately leads to the proposition by using the dualities of the tensor categories involved.
\end{proof}

However,  Theorems \ref{thm:main1-osp} and \ref{thm:main2-osp} both fail, as
the tensor functors in the theorems are no longer full  in this case.

\begin{remark} \label{rem:even-osp-reflection}
If $\Pi_\fb$ is given by \eqref{eq:even-o}, one may define  the action
of $\Z_2=\{1, \sigma\}$ on $\U_q(\mathfrak{osp}_{m|2n}, \phi; T)$ so that
$\sigma$ sends
\[
\begin{aligned}
e_{\ell+n-1}\mapsto e_{\ell+n}, \quad  f_{\ell+n-1}\mapsto f_{\ell+n},
\quad h_{\ell+n-1}\mapsto h_{\ell+n},\\
e_{\ell+n}\mapsto e_{\ell+n-1}, \quad   f_{\ell+n}\mapsto f_{\ell+n-1},
\quad h_{\ell+n}\mapsto h_{\ell+n-1},
\end{aligned}
\]
but leaves all other generators invariant.
Then we can modify quantum $\mathfrak{osp}_{2\ell|2n}$ as in Section \ref{sect:inv-odd-m}, and
we expect equation  \eqref{eq:main-even-osp} to hold at arbitrary $r$ and $s$ for
this modified quantum supergroup.
\end{remark}
\subsection{A representation of the BMW algebra}

We use $\cT_q(\F)$ to denote the category of tensor $\U_q(\F)$-modules for both even and odd $m$.
Let $I$, $X_i^+$ and $X_i^-$ ($1\le i\le r-1$) be non-directed ribbon graphs respectively
depicted in Figure \ref{fig:Br-generators}, and let $E_i$ be the
non-directed ribbon graph shown in Figure \ref{fig:E}.
\begin{figure}[h]
\begin{center}
\setlength{\unitlength}{.7mm}
\begin{picture}(140, 20)(5,40)

\put(10, 50){$E_i=$}
\put(30,60){\line(0,-1){20}}
\put(38,50){...}
\put(36,40){\tiny $i-1$}

\put(50,60){\line(0,-1){20}}
\put(70,60) {\oval(18,18)[b]}
\put(70,40){\oval(18, 18)[t]}

\put(90,60){\line(0,-1){20}}
\put(98,50){...}
\put(92,40){\tiny $r-i-1$}
\put(110,60){\line(0,-1){20}}

\end{picture}
\end{center}
\caption{$E_i$ generators}
\label{fig:E}
\end{figure}
Then ${\cH'}_r^r(\F)$ as an associative algebra is generated by the elements $X_i^+$, $X_i^-$ and $E_i$
by composition of non-directed ribbon graphs. Note that $I$ is the identity of ${\cH'}_r^r(\F)$,
and $X_i^+X_i^-=X_i^-X_i^+=I$.
Thus ${\cH'}_r^r(\F)$ contains the group algebra of the braid group $B_r$
generated by the elements $X_i^+$ and $X_i^-$.

Write $g=\cF_{q, \F}(X^+)$ and $e=\cF_{q, \F}(U\Omega)$. Then $g^{-1}=\cF_{q, \F}(X^-)$ and
\begin{eqnarray}\label{eq:BMW}
\begin{aligned}
&e^2=\sdim_q(V_q) e,  \quad  e g=ge= q^{-\omega_{V}} e, \\
&(\id\otimes g)(g\otimes \id) (\id\otimes g)=
(g\otimes\id)(\id\otimes g)(g\otimes\id), \\
&(\id\otimes e)(g^{\pm 1}\otimes \id) (\id\otimes e)=q^{\pm\omega_V }(\id\otimes e), \\
&(e\otimes\id)(\id\otimes g^{\pm 1})(e\otimes\id)=q^{\pm\omega_V} (e\otimes\id), \\
&(\id\otimes e)(e\otimes \id) (\id\otimes e)=\id\otimes e, \\
&(e\otimes\id)(\id\otimes e)(e\otimes\id)=e\otimes\id,
\end{aligned}
\end{eqnarray}
where $\id=\id_{V_q}$ and $\omega_V=m-2n-1$ is the eigenvalue
of the Casimir operator of $\U(\fg; \C)$ acting on $V$. Also, $\sdim_q(V_q)$ denotes
the quantum superdimension of $V_q$, which was computed explicitly for the distinguished
root datum \cite[\S C.1]{Z92a}, and hence for all root data by Lemma \ref{rem:q-sdim}.
We have
\[
\sdim_q(V_q)= 1 + \frac{q^{m-2n-1} - q^{-m+2n +1}}{q-q^{-1}}.
\]
Note that
$\sdim_q(V_q)=0$ if and only if $m=2n$.   Let
\[
g_i :=\cF_{q, \F}(X^+_i), \quad g^{-1}_i :=\cF_{q, \F}(X^-_i), \quad e_i :=\cF_{q, \F}(E_i).
\]
We have
\[
\begin{aligned}
g_i &=\underbrace{\id\otimes \dots \otimes \id}_{i-1}\otimes g \otimes \underbrace{\id\otimes \dots \otimes \id}_{r-i-1}, \\
g_i^{-1} &=\underbrace{\id\otimes \dots \otimes \id}_{i-1}\otimes g^{-1} \otimes \underbrace{\id\otimes \dots \otimes \id}_{r-i-1}, \\
e_i &=\underbrace{\id\otimes \dots \otimes \id}_{i-1}\otimes e \otimes \underbrace{\id\otimes \dots \otimes \id}_{r-i-1}.
\end{aligned}
\]
Denote by $\nu_r: {\cH'}_r^r(\F)\longrightarrow \End_{\U_q(\F)}(V_q^{\otimes r})$ the representation of ${\cH'}_r^r(\F)$
given by the functor $\cF_{q, \F}$.
\begin{proposition}\label{prop:BMW}
The representation $\nu_r$ of ${\cH'}_r^r(\F)$
factors through the BMW algebra $BMW_r(y, z)$ with
$z=q-q^{-1}$ and $y=q^{-m+2n+1}$ in the notation of \cite[\S 4.2]{LZ10}.
\end{proposition}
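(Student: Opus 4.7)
The plan is to verify directly that the images under $\nu_r$ of the generators of ${\cH'}_r^r(\F)$ satisfy the defining relations of $BMW_r(y,z)$ for the stated parameters. In the presentation of \cite[\S 4.2]{LZ10}, the BMW algebra has invertible generators $G_i$ and idempotent-type generators $E_i$ ($1\le i\le r-1$) subject to \textup{(i)} braid and locality relations for the $G_i$; \textup{(ii)} tangle relations $G_iE_i=E_iG_i=yE_i$ and $E_iG_{i\pm1}^{\pm1}E_i=y^{\mp1}E_i$; \textup{(iii)} a loop relation $E_i^2=xE_i$ with $x=1+(y^{-1}-y)/z$; and \textup{(iv)} the Kauffman skein relation $G_i-G_i^{-1}=z(1-E_i)$. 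Under the identifications $G_i\leftrightarrow g_i$, $E_i\leftrightarrow e_i$, $y=q^{-\omega_V}=q^{-m+2n+1}$ and $z=q-q^{-1}$, the relations \textup{(i)}--\textup{(iii)} are either listed in \eqref{eq:BMW} or are an immediate consequence of the ribbon category structure (locality of the $g_i$ and $e_i$ reflects that they are built from endomorphisms acting on non-overlapping tensor slots).

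The only nontrivial verification is the skein relation \textup{(iv)}. For this I will exploit the eigenvalue decomposition of $g=\cF_{q,\F}(X^+)=c_{V_q,V_q}$ already used in the proof of Theorem \ref{thm:main1-osp}. There, the tensor square $V_q\otimes V_q$ decomposes into three simple summands $L_q(\tilde s)$, $L_q(a)$, $L_q(0)$ on which $g$ acts by $q$, $-q^{-1}$, and $q^{-m+2n+1}=y$ respectively. The endomorphism $e=\cF_{q,\F}(U\Omega)$ is a coevaluation followed by an evaluation, so its image lies in the trivial submodule $L_q(0)$; since $\dim L_q(0)=1$ and $e^2=\sdim_q(V_q)\,e$, the element $e$ acts as the scalar $\sdim_q(V_q)$ on $L_q(0)$ and vanishes on the other two summands. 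Testing $g-g^{-1}-z(1-e)$ summand by summand then reduces to the elementary identity $q-q^{-1}=z$ on $L_q(\tilde s)$ and $L_q(a)$, and to the identity $y-y^{-1}=z(1-\sdim_q(V_q))$ on $L_q(0)$; the latter is, after substituting the explicit expression for $\sdim_q(V_q)$ displayed just after \eqref{eq:BMW}, the tautology $y-y^{-1}=-(q^{m-2n-1}-q^{-m+2n+1})$. The same eigenvalue computation shows that the loop scalar $\sdim_q(V_q)$ coincides with $1+(y^{-1}-y)/z$, so relation \textup{(iii)} is also consistent.

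The main obstacle, such as it is, will be sign and convention bookkeeping, both for the BMW parameters in \cite{LZ10} and for the sign of the braiding eigenvalues on the three summands of $V_q\otimes V_q$. Once these are fixed and Lemma \ref{rem:q-sdim} is invoked to ensure $\sdim_q(V_q)$ is independent of the root datum chosen to define $\U_q(\F)$, the relations \textup{(i)}--\textup{(iv)} are all forced by the calculation above, so the representation $\nu_r$ factors through $BMW_r(y,z)$ as claimed.
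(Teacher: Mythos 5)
Your argument is essentially the paper's own for the generic case, but it silently assumes semisimplicity that fails when $m=2n$, and the paper's proof has to treat that case separately.

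For $m\neq 2n$, your approach matches the paper's almost step for step: use the decomposition $V_q\otimes V_q=L_q(\tilde s)\oplus L_q(a)\oplus L_q(0)$, the spectral form of $g=c_{V_q,V_q}$ with eigenvalues $q,-q^{-1},q^{-m+2n+1}$, the identification $e=\sdim_q(V_q)\,P[0]$, and then verify the Kauffman skein relation $g-g^{-1}=z(1-e)$ summand by summand. Your bookkeeping is correct: $\sdim_q(V_q)=1+(y^{-1}-y)/z$ checks out against the displayed formula for $\sdim_q(V_q)$, and the identity on $L_q(0)$ is the one you wrote. This is the same computation the paper carries out (it eliminates $P[\tilde s]$ using $P[\tilde s]+P[a]+P[0]=1$ to obtain \eqref{eq:g+g-in}, then takes a difference, whereas you test each eigenspace — a cosmetic difference only).

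The genuine gap is the case $m=2n$. There $\sdim_q(V_q)=0$, so $e$ is nilpotent ($e^2=0$) rather than a scalar multiple of a projector, and $V_q\otimes V_q$ is \emph{not} semisimple: it decomposes as $L_q(a)\oplus M(s)$ with $M(s)$ indecomposable, so your three-summand decomposition and the statement "$e$ acts as the scalar $\sdim_q(V_q)$ on $L_q(0)$" do not make sense. Your proof never mentions this degeneration, so as written it only establishes the proposition for $m\neq 2n$. The paper closes this case by observing that the expressions \eqref{eq:g+g-in} for $g$ and $g^{-1}$ in terms of $\mathrm{id}$, $P[a]$ and $e$ remain meaningful and valid at $m=2n$ (their coefficients do not degenerate, since $q+q^{m-2n-1}=q+q^{-1}$), citing the explicit $R$-matrix computation of \cite{S00a} and \cite[\S C.1]{Z92a} to justify that the formulas continue to hold; the skein relation then follows by the same subtraction. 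You would need to add an argument of this kind — either a specialization/continuity argument in $m-2n$ or an appeal to the explicit $R$-matrix — to cover $m=2n$.
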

\begin{proof} Let us first assume that $m\ne 2n$. In this case,
the theorem can be proven by formal arguments,
which we briefly outline below.
The tensor square of $V_q$ is semi-simple, and we have
\[
V_q\otimes V_q = L_q(\tilde s) \oplus L_q(a)\oplus L_q(0),
\]
with $\dim L_q(0)=1$. Here the simple submodule $L_q(\tilde s)$ is the
$q$-analogue of the supertraceless submodule $L(\tilde s)$ of the $\Z_2$-graded symmetric
tensor $S^2 V$, $L_q(0)$ is that of the supertrace $L(0)\subset S^2 V$, and
$L_q(a)$ is that of the $\Z_2$-graded skew symmetric tensor
$L(a)=\wedge^2 V$.

Denote by $P[\psi]$ the idempotent
mapping $V_q\otimes V_q$ onto $L_q(\psi)$ for $\psi=\tilde{s}, a, 0$.
Then
\begin{eqnarray}\label{eq:spectr}
\cF_{q, \F}(X^+)=\tau\circ R_{V_q, V_q}= q^{\chi_{\tilde s}}P[\tilde{s}] - q^{\chi_a}P[a]+q^{\chi_0}P[0],
\end{eqnarray}
where the scalars $\chi_\psi$ are given by \eqref{eq:eigen-Casimir}

Note that $e$ is a quasi idempotent such that
$e(V_q\otimes V_q)= L_q(0)$.
As $\sdim_q(V_q)$ is nonzero when $m\ne 2n$,
we must have $e=\sdim_q(V_q)  P[0]$.  By using $P[\tilde{s}] +P[a]+P[0]=1$,
we can eliminate $P[\tilde{s}]$ from the spectral decompositions of  $g$ and $g^{-1}$
to obtain
\begin{eqnarray}\label{eq:g+g-in}
\begin{aligned}
g&= q- (q+q^{-1}) P[a] - \frac{q(q-q^{-1})}{q + q^{m-2n -1}} e, \\
g^{-1}&= q^{-1}- (q+q^{-1}) P[a] +\frac{q^{m-2n-1}(q-q^{-1})}{q + q^{m-2n -1}} e,
\end{aligned}
\end{eqnarray}
where we have also expressed $P[0]$ in terms of $e$. Taking the difference between these equations, we obtain
\begin{eqnarray}\label{eq:BMW-1}
g-g^{-1}=(q-q^{-1})\left(1- e\right).
\end{eqnarray}

From relations in \eqref{eq:BMW} and \eqref{eq:BMW-1}, we easily see that
the $g_i$ and $e_i$ generate a representation of the BMW algebra $BMW_r(y, z)$ with
$z=q-q^{-1}$ and $y=q^{-\omega_V}$ in the notation of \cite[\S 4.2]{LZ10}.

Now we assume $m=2n$. In this case,  $V_q\otimes V_q $ is not semi-simple, but the simple module
$L_q(a)$ is still a direct summand of it.  Explicitly,
$V_q\otimes V_q = L_q(a)\oplus M(s)$, where $M(s)$ is an indecomposable module
containing the $1$-dimensional submodule $e(V_q\otimes V_q)$.
Note that $g^{\pm 1}$, $P[a]$ and $e$ are all well defined irrespectively of $m$ and $n$.
Thus the equations in \eqref{eq:g+g-in} remain valid but with $m=2n$.
They can also be extracted from the long handed calculations for the $R$-matrix in \cite{S00a}
(also see \cite[\S C.1]{Z92a}).
Therefore \eqref{eq:BMW-1} is still satisfied. Hence the $g_i$ and $e_i$ generate
a representation of the BMW algebra with the given parameters.

This completes the proof of the theorem.
\end{proof}

We close by noting that an explicit expression of $R_{V_q, V_q}$ in matrix form is given in \cite{S00a}.

\medskip

\noindent{\bf Acknowledgments}.  This work was supported by the Australian Research Council
and the National Science Foundation of China. Part of this work was carried out during stays of H Zhang and R Zhang
at the School of Mathematical Sciences, the University of Science and Technology of China.



\appendix
\section{Categories of ribbon graphs}

This appendix contains some basic facts on braided tensor categories and
categories of ribbon graphs. The material is not new. It is included here for easy reference.
More details can be found \cite{Kass} and \cite{Turaev}.

%
%
%
\subsection{Making tensor categories strict}\label{sect:tensor}\label{append:tensor-cat}
A tensor category $(\cC, \otimes, a, I, l, r)$ is a category $\cC$ with a functor
$\otimes: \cC\times \cC\longrightarrow \cC$ called the tensor product, an object $I$ called the unit,
an associativity constraint $a$ for the tensor product, a left unit constraint $\ell$ and
right unit constraint $r$ with respect to $I$,
such that the pentagon and triangle axioms are satisfied
(see \cite[\S XI-\S XIV]{Kass}). If the associativity and unit constraints are all identities,
the tensor category is said to be strict.

A tensor functor $(F, \varphi_0, \varphi_2): (\cC, \otimes, a, I, \ell, r)
\longrightarrow(\cD, \otimes, a, I', \ell, r)$ between two tensor categories consists of
 a functor $F:\cC\longrightarrow \cD$,
 an isomorphism $\varphi_0: I'\longrightarrow F(I)$ in $\cD$,  and
 a family of natural isomorphisms $\varphi_2(V, W): F(V)\otimes F(W)\longrightarrow F(V\otimes W)$
indexed by pairs $(U, V)$ of objects in $\cC$,
which satisfy certain conditions involving the associativity and unit constraints.
The tensor functor is said to be strict if $\varphi_0$ and $\varphi_2$ are identities of $\cD$.

Let $(F, \varphi_0, \varphi_2), (F', \varphi'_0, \varphi'_2): \cC\longrightarrow \cD$ be tensor functors.
A natural tensor transformation $(F, \varphi_0, \varphi_2)\longrightarrow  (F', \varphi'_0, \varphi'_2)$
is a natural transformation $\eta: F\longrightarrow  F'$ such that the following diagrams
commute
\[
\xymatrix{
&I'\ar[dl]_{\varphi_0}\ar[dr]^{\varphi'_0}&\\
F(I)\ar[rr] ^{\eta(I)}&&F'(I),
} \quad\quad
\xymatrix{
F(U)\otimes F(V)\ar[d]_{\eta(U)\otimes\eta(V)}\ar[rr]^{\varphi_2(U, V)}&& F(U\otimes V)\ar[d]^{\eta(U\otimes V)}\\
F'(U)\otimes F'(V)\ar[rr]^{\varphi'_2(U, V)}&& F'(U\otimes V)
}
\]
for all objects $U, V$ of $\cC$.  It is called a  natural tensor isomorphism if $\eta$ is a natural isomorphism.
A tensor equivalence between tensor categories is a tensor functor $F:\cC\longrightarrow\cD$ such that there exist
a tensor functor $F': \cD\longrightarrow\cC$ and natural tensor isomorphisms $\eta: \id_{\cD}\longrightarrow F F'$ and
$\theta: F' F \longrightarrow\id_{\cC}$.

A tensor category $(\cC, \otimes, a, I, \ell, r)$ has a left duality if corresponding to each object $V$,
there exists an object $V^\vee$ and morphisms
\begin{eqnarray}\label{eq:dual-maps}
\Omega_V: V^\vee\otimes V\longrightarrow I, \quad \Upsilon_V: I\longrightarrow V\otimes V^\vee,
\end{eqnarray}
respectively called the evaluation and co-evaluation maps,
such that the following diagrams commute:
\begin{eqnarray}
\label{eq:dual-1}
\begin{aligned}
\xymatrix{
(V\otimes V^\vee)\otimes V\ar[rr]^{a_{V, V^\vee, V}}& &V\otimes(V^\vee\otimes V)\ar[d]_{\id_V\otimes \Omega_V}\\
I\otimes V\ar[u]_{\Upsilon_V\otimes\id_V}  &     & V\otimes I\ar[d]_{r_V}\\
V\ar[u]_{\ell_V^{-1}}\ar[rr]^{\id_V}&& V,
}
\end{aligned}
\end{eqnarray}
\begin{eqnarray}
\label{eq:dual-2}
\begin{aligned}
\xymatrix{
V^\vee\otimes (V\otimes V^\vee)\ar[rr]^{a^{-1}_{V^\vee, V, V^\vee}}& &(V^\vee\otimes V)\otimes V^\vee\ar[d]_{\Omega_V\otimes\id_{V^\vee}}\\
V^\vee\otimes I\ar[u]_{\Upsilon_V\otimes\id_V}  &     & I \otimes V^\vee\ar[d]_{\ell_{V^\vee}}\\
V^\vee\ar[u]_{r_{V^\vee}^{-1}}\ar[rr]^{\id_{V^\vee}}&& V^\vee.
}
\end{aligned}
\end{eqnarray}
%
Right duality can be defined similarly, which need not coincide with left duality.

Mac Lane's coherence theorem enables one to turn any given tensor category
$(\cC, \otimes, a, I, l, r)$  into a strict one (see, e.g.,  \cite[\S XI.5]{Kass}).
Let $\cS$ be the class of sequences of objects in $\cC$, which have finite lengths,
where the length of a sequence has the obvious meaning, namely,  the length of
the empty $\emptyset$ is $0$, and the length of a sequence of the form
 $S=(V_1, V_2, \dots, V_k)$ is $k$.
We define $*: \cS\times\cS\longrightarrow\cS$ for all sequences $S=(V_1, V_2, \dots, V_k)$
and $S'=(V_{k+1}, V_{k+2}, \dots, V_{k+n})$ by
\begin{eqnarray}\label{eq:star1}
S*S'=(V_1, \dots, V_k, V_{k+1}, \dots, V_{k+n}), \quad S*\emptyset = \emptyset*S=S.
\end{eqnarray}
Clearly $*$ is associative.
To each sequence in $\cS$, we assign an object $\cF(S)$ of $\cC$ defined by
$\cF(\emptyset)=I$, $\cF((V))=V$, and for $S=(V_1, V_2, \dots, V_k)$,
\begin{eqnarray}\label{eq:strictify1}
\begin{aligned}
\cF(S)=  (\dots(V_1\otimes V_2)\otimes \dots\otimes V_{k-1})\otimes V_k.
\end{aligned}
\end{eqnarray}

We denote by $\cC_s$ the category such that the objects are elements of $\cS$, and  morphisms are given by
$
\Hom_{\cC_s}(S, S') = \Hom_{\cC}(\cF(S), \cF(S)')
$
for all $S, S'$ in $\cS$.  The composition of morphisms and the identity morphisms are those in $\cC$.
Extend $\cF$ to a functor by letting
\begin{eqnarray}
\cF(f) = f, \quad \text{for any morphism $f$ in $\cC_s$}.
\end{eqnarray}
Then $\cF: \cC_s\longrightarrow \cC$ is fully faithful.
It is also essentially surjective as any object in $\cC$ is isomorphic
to the image under $\cF$ of some sequence in $\cS$.
Therefore
$
\cF: \cC_s\longrightarrow\cC
$
is an equivalence of categories.

Let us equip $\cC_s$ with the structure of a strict tensor category.
We take the empty sequence $\emptyset$ to be
the identity object, and $*$ to be the tensor product on objects.
To define a tensor product on morphisms, we first define the natural isomorphisms
\begin{eqnarray}
\varphi(S, S'): \cF(S)\otimes \cF(S)\longrightarrow \cF(S*S')
\end{eqnarray}
inductively on the lengths of sequences by
\begin{eqnarray}\label{eq:map-ast-tensor}
\begin{aligned}
&\varphi(\emptyset, S)=\ell_{\cF(S)}: I\otimes \cF(S)\longrightarrow \cF(S), \\
&\varphi(S, \emptyset)=r_{\cF(S)}: \cF(S)\otimes I\longrightarrow \cF(S), \\
&\varphi(S, (V))=\id_{\cF(S)\otimes V}: \cF(S)\otimes V\longrightarrow \cF(S*(V)), \\
&\varphi(S, S'*(V)): \cF(S)\otimes \cF(S'*(V))\longrightarrow \cF(S*S'*(V)), \\
&\varphi(S, S'*(V)) = (\varphi(S, S')\otimes\id_V)\circ a^{-1}_{\cF(S), \cF(S'), V}.
\end{aligned}
\end{eqnarray}

%
%
%
%
Let $\Hom_{\cC_s}(S*T, S'*T')=\Hom_{\cC}(\cF(S*T), \cF(S'*T'))$
for all sequences $S, S', T, T'$ in $\cS$.
Now for any  $f: \cF(S)\longrightarrow \cF(S')$ and $g: \cF(T)\longrightarrow \cF(T')$, we define
$
f*g\in \Hom_{\cC_s}(S*T, S'*T')
$
by
\begin{eqnarray}\label{eq:star2}
f*g  =\varphi(S', T') \circ(f\otimes g)\circ \varphi(S, T)^{-1}.
\end{eqnarray}
%
%

The following result is well-known, see e.g., \cite[Proposition XI.5.1]{Kass}.
\begin{theorem}\label{thm:tensor-equiv}\label{def:strict-cat}
The category $\cC_s$ quipped with the tensor product
$*:\cC_s\times\cC_s\longrightarrow\cC_s$ is a strict tensor category.
Furthermore, there is the tensor equivalence $(\cF, \id_I, \varphi): \cC_s\longrightarrow\cC$.
Call $\cC_s$ the strict tensor category associated with $\cC$.
\end{theorem}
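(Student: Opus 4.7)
The plan is to verify the strict tensor category axioms on $\cC_s$ and then the tensor-functor axioms for $(\cF, \id_I, \varphi)$, with Mac Lane's pentagon axiom on $\cC$ as the essential ingredient. Strict associativity of $*$ on objects and the fact that the empty sequence $\emptyset$ acts as a strict unit are immediate from \eqref{eq:star1}, so all the real content lies in handling morphisms and $\varphi$. First I would establish the key coherence property of the natural isomorphisms $\varphi(S,S')$, namely that for all sequences $X, Y, Z \in \cS$,
\[
\varphi(X*Y, Z)\circ(\varphi(X,Y)\otimes \id_{\cF(Z)}) = \varphi(X, Y*Z)\circ(\id_{\cF(X)}\otimes \varphi(Y,Z))\circ a_{\cF(X),\cF(Y),\cF(Z)}.
\]
This is proved by induction on the length of $Z$. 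The base cases $Z=\emptyset$ and $Z=(V)$ follow from the triangle axiom and the explicit formulas in \eqref{eq:map-ast-tensor}; the inductive step $Z=Z'*(V)$ rewrites $\varphi(-,Z'*(V))$ via its defining recursion and reduces to a single application of the pentagon axiom for the associator $a$ of $\cC$ combined with the inductive hypothesis.

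Next I would check bifunctoriality of $*:\cC_s\times\cC_s\longrightarrow\cC_s$. Preservation of identities is immediate from \eqref{eq:star2} since $\id_{\cF(S)}\otimes\id_{\cF(T)}=\id_{\cF(S)\otimes\cF(T)}$. Preservation of composition follows from the formula \eqref{eq:star2} by inserting $\varphi(S'',T'')^{-1}\circ\varphi(S'',T'')$ in the middle, and then using functoriality of $\otimes$ in $\cC$. Strict associativity of $*$ on morphisms, i.e.\ $(f*g)*h = f*(g*h)$ for $f:S\to S'$, $g:T\to T'$, $h:U\to U'$, is then obtained by unwinding \eqref{eq:star2} on each side: the left-hand side equals $\varphi(S'*T',U')\circ(\varphi(S',T')\otimes\id_{\cF(U')})\circ(f\otimes g\otimes h)\circ(\varphi(S,T)\otimes\id_{\cF(U)})^{-1}\circ\varphi(S*T,U)^{-1}$, while the right-hand side has the analogous expression with the other bracketing; the coherence identity of Step~1 converts one into the other.

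Finally, for the tensor equivalence, the functor $\cF:\cC_s\to\cC$ is fully faithful by the very definition of $\Hom_{\cC_s}$ and essentially surjective because every object of $\cC$ is isomorphic to $\cF((V))=V$ for some $V$. That $(\cF,\id_I,\varphi)$ is a tensor functor amounts to the compatibility diagrams involving the associator and unit constraints of $\cC$ (since those of $\cC_s$ are all identities): the hexagon-type diagram is precisely the coherence identity of Step~1, while the two triangle diagrams follow directly from the definitions $\varphi(\emptyset,S)=\ell_{\cF(S)}$ and $\varphi(S,\emptyset)=r_{\cF(S)}$ given in \eqref{eq:map-ast-tensor}. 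The main obstacle is Step~1; although ultimately a bookkeeping exercise, it is the heart of Mac Lane's coherence theorem and must be handled with care, since the definition of $\varphi(S,S')$ in \eqref{eq:map-ast-tensor} is intrinsically asymmetric in its two arguments and the induction has to be set up to respect that asymmetry.
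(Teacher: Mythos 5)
Your proposal is correct and reproduces the standard strictification argument (Mac Lane coherence), which is precisely what the paper itself invokes by simply citing Kassel, Proposition XI.5.1; the coherence identity you isolate for $\varphi$ in Step 1 is in fact present in commented-out form in the paper's source, so the intended route is the same. The only small imprecision is that the base case $Z=\emptyset$ uses the derived identity $r_{V\otimes W}=(\id_V\otimes r_W)\circ a_{V,W,I}$, which follows from the triangle axiom together with the pentagon axiom and naturality rather than from the triangle axiom alone.
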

\begin{notation}\label{rem:boxtimes}
If $\cC$ is a subcategory of the category of $\Z_2$-graded $\K$-vector spaces, we shall write
the associative tensor product of $\cC_s$ as $\boxtimes$.
Thus for any sequence $(V_1, V_2, \dots, V_k)$ of objects and morphisms $f, g$ in $\cC$,
\[
\begin{aligned}
V_1\boxtimes V_2\boxtimes\dots\boxtimes V_k = (\dots(V_1\otimes V_2)\otimes \dots\otimes V_{k-1})\otimes V_k,
\quad f\boxtimes g = f*g.
\end{aligned}
\]
\end{notation}

\subsection{Braided tensor categories}\label{sect:ribbon-cat}
The notion of braided tensor category is due to Joyal and Street \cite{JS}.
\begin{definition}
A  commutativity constraint $c$ for a tensor category $(\cC, \otimes, a, I, l, r)$
is a family of natural isomorphisms $c_{V, W}: V\otimes W\longrightarrow W\otimes V$
for all pairs of objects $V, W$ in $\cC$.  If $c$ satisfies the hexagon axiom (see \cite[\S XIII.1.1]{Kass}),
the tensor category is called a braided tensor category.
\end{definition}

Assume that both $\cC$ and $\cD$ are braided tensor categories.
A tensor functor $(\cF, \varphi_0, \varphi_2): \cC\longrightarrow \cD$ is braided if for any objects $V, W$ of $\cC$, the
following diagram commutes:
\begin{eqnarray}
\begin{aligned}
\xymatrix{
\cF(V)\otimes \cF(W)\ar[d]_{c_{\cF(V), \cF(W)}}\ar[rr]^{\varphi_2(V, W)}&& \cF(V\otimes W)\ar[d]^{\cF(c_{V, W})} \\
\cF(W)\otimes \cF(V)\ar[rr]^{\varphi_2(W, V)}&& \cF(W\otimes V).
}
\end{aligned}
\end{eqnarray}
A tensor equivalence satisfying this condition is a braided tensor equivalence.

Let $(\cC, \otimes, a, I, l, r, c)$ be a braided tensor category.
Let $(\cF, \id_I, \phi): (\cC_s, *, \emptyset) \longrightarrow
(\cC, \otimes, a, I, l, r)$
be the tensor equivalence from the associated
strict tensor category $\cC_s$ to $\cC$ given in Theorem \ref{thm:tensor-equiv}.
We define $c_{S, S'}\in \Hom_{\cC_s}(S*S', S'*S)$ for any $S, S'\in\cS$ by the
commutativity of the following diagram
\begin{eqnarray}\label{eq:braid-s}
\begin{aligned}
\xymatrix{
\cF(S)\otimes \cF(S')\ar[d]_{c_{\cF(S), \cF(S')}}\ar[rr]^{\varphi(S, S')}&& \cF(S*S')\ar[d]^{\cF(c_{S, S'})} \\
\cF(S)\otimes \cF(S')\ar[rr]^{\varphi(S', S)}&& \cF(S*S').
}
\end{aligned}
\end{eqnarray}
Since $\cF$ is the identity map on morphisms, we have $c_{S, S'}=\cF(c_{S, S'})$. Hence the commutative diagram gives
$
 c_{S, S'}= \varphi(S', S) c_{\cF(S), \cF(S')} \varphi(S, S')^{-1}
$
for any pair of objects $S, S'\in\cC_s$, yielding
a family of natural isomorphisms $c_{S, S'}$ in $\cC_s$.  They
give rise to a braiding for $\cC_s$, that is, the following relations hold
\[
\begin{aligned}
c_{X, Y*Z}= (\id_Y*c_{X, Z})\circ(c_{X, Y}*\id_Z),\\
c_{X*Y, Z} = (c_{X, Z}*\id_Y)\circ (\id_X*c_{Y, Z})
\end{aligned}
\]
for all $X, Y, Z$ in $\cS$.
It immediately follows from the commutativity of the diagram \eqref{eq:braid-s} that
\begin{theorem}\label{thm:equiv-braid-tensor-cat}
The tensor functor $(\cF, \id_A, \varphi): \cC_s\longrightarrow\cC$ of
Theorem \ref{thm:tensor-equiv} is
an equivalence of braided tensor categories.
\end{theorem}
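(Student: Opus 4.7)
The plan is to leverage Theorem \ref{thm:tensor-equiv}, which already supplies the underlying tensor equivalence $(\cF, \id_I, \varphi)\colon \cC_s\to \cC$; it remains only to promote it to an equivalence of braided tensor categories. This reduces to two checks: (i) that the morphisms $c_{S,S'}$ defined by diagram \eqref{eq:braid-s} really equip $\cC_s$ with the structure of a braided tensor category, and (ii) that with this braiding, $(\cF, \id_I, \varphi)$ is in fact a braided tensor functor.

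First I would verify naturality. Unwinding \eqref{eq:braid-s} (and using that $\cF$ is the identity on morphisms) yields the explicit formula
\[
c_{S,S'} = \varphi(S',S)\circ c_{\cF(S),\cF(S')}\circ \varphi(S,S')^{-1},
\]
a composition of three natural isomorphisms ($\varphi$ is natural as part of the tensor structure, and $c$ is natural in $\cC$), hence natural in $S$ and $S'$.

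Next I would check the two hexagon identities in $\cC_s$, namely
\[
c_{X, Y*Z} = (\id_Y * c_{X,Z})\circ (c_{X,Y} * \id_Z),\qquad c_{X*Y,Z} = (c_{X,Z}*\id_Y)\circ (\id_X * c_{Y,Z}).
\]
Translating both sides through $\varphi$ back to morphisms in $\cC$ via \eqref{eq:star2}, each side becomes a composition involving $c_{\cF(X), \cF(Y)\otimes \cF(Z)}$ (respectively with $X$ and $Y\otimes Z$ interchanged) together with $\varphi$'s and associators. The key input is the pentagon-type compatibility of $\varphi$ with the associator, which is built into the inductive definition \eqref{eq:map-ast-tensor} and underlies the proof of Theorem \ref{thm:tensor-equiv}; after invoking this and the hexagon axiom for $c$ in $\cC$, the two sides coincide. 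This diagram chase is the only step that requires genuine work and is the main obstacle, though it is essentially formal; it is most cleanly done by induction on the lengths of the sequences $X, Y, Z$, reducing to the base case where each sequence has a single entry, which is precisely the hexagon axiom in $\cC$.

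Finally, the braided tensor functor condition for $(\cF, \id_I, \varphi)$ is exactly the commutativity of the diagram \eqref{eq:braid-s}, and this holds \emph{by construction}, since $c_{S,S'}$ was defined to make that square commute. Combining this with the tensor equivalence from Theorem \ref{thm:tensor-equiv} yields the desired braided tensor equivalence.
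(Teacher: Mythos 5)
Your proof follows the same approach as the paper's: define the braiding on $\cC_s$ by conjugating the braiding of $\cC$ through the natural isomorphisms $\varphi(S,S')$, note that this is natural, and observe that the braided tensor functor condition then holds by construction of $c_{S,S'}$. The paper is in fact terser—it simply records the two hexagon relations for $c_{S,S'}$ and states that the theorem then ``immediately follows'' from the commutativity of the defining diagram—so your sketch of the inductive verification of the hexagon identities is a reasonable elaboration of what the paper leaves implicit rather than a genuinely different argument.
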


%
%

Let $(\cC, \otimes, I, c)$ be a braided strict tensor category with left duality.
A twist for $\cC$ is a functorial  isomorphism $\theta_V: V\longrightarrow V$ defined for each object of $\cC$ such that
\[
\theta_{V\otimes W} = (\theta_V\otimes \theta_W)c_{W, V} c_{V, W}, \quad \theta_{V^\vee} = (\theta_V)^\vee
\]
for all objects $V$ and $W$, where $(\theta_V)^\vee: V^\vee\longrightarrow V^\vee$ is the left transpose of $\theta_V$.
Given any morphism $f: V\longrightarrow W$ in a strict tensor category with left duality,
its left transpose $f^\vee: W^\vee\longrightarrow V^\vee$ is defined by
\[
f^\vee = (\Omega_W\otimes\id_{V^\vee})(\id_{W^\vee}\otimes f\otimes \id_{V^\vee})(\id_{W^\vee}\otimes\Upsilon_V).
\]

\begin{definition}\label{ribbon-cat}
A braided strict tensor category with a left duality and twist
is called a ribbon category.
\end{definition}

For all objects $V$ and $W$ in a ribbon category, $c_{V^\vee, W}$ and $c_{W, V}$ are related by
\[
c_{V^\vee, W} = (\Omega_V\otimes\id_W\otimes\id_{V^\vee}) (\id_{V^\vee}\otimes c^{-1}_{W, V}
\otimes\id_{V^\vee})(\id_{V^\vee}\otimes\id_W\otimes\Upsilon_V).
\]
Define $\Upsilon'_V: I\longrightarrow V^\vee\otimes V$ and
$\Omega'_V: V\otimes V^\vee\longrightarrow I$ by
\begin{eqnarray}\label{eq:r-duality}
\begin{aligned}
&\Upsilon'_V= (\id_{V^\vee}\otimes\theta_V)c_{V, V^\vee} \Upsilon_V,\\
&\Omega'_V= \Omega_V c_{V, , V^\vee}(\id_V\otimes\theta_{V^\vee}).
\end{aligned}
\end{eqnarray}
Then ${^\vee}V, \Upsilon'_V, \Omega'_V$ for all objects $V$ define a right duality. Hence any ribbon category
automatically has a right duality.

\subsection{Categories of ribbon graphs}\label{append:ribbon-cat}

%
%
%
%

The category of directed ribbon graphs was introduced in \cite{RT90}
(also see \cite{FY, Sh}).
A ribbon is the square $[0,1]\times[0,1]$ smoothly embedded
in $\R^3$. The images of $[0,1]\times 0$ and  $[0,1]\times 1$ are
called the bases, and that of ${\frac{1}{2}}\times [0,1]$ is called
the core of the ribbon.  An annulus is the cylinder
$S^1\times [0,1]$
embedded in $\R^3$, and the image of $S^1\times  {\frac{1}{2}}$
under the embedding is called the core of the annulus. Ribbons and
annuli are oriented as surfaces and their cores are directed.

Let $k, \ l$ be nonnegative integers.  A ribbon $(k, \ell)$-graph is
an oriented and directed surface embedded in $\R^2\times [0, 1]\subset \R^3$,
which is decomposed into the union of ribbons and annuli without intersections.
The intersection of the surface with $\R^2 \times 0$
and $\R^2 \times 1$ are respectively
\begin{eqnarray}\label{eq:bases}
\begin{aligned}
&\left\{\left.\left[i-{\frac{1}{4}}, i+{\frac{1}{4}}\right]\times 0\times 0 \, \right| \,  1\le i\le k\right\}, \\
&\left\{\left.\left[j-{\frac{1}{4}}, j+{\frac{1}{4}}\right]\times 0\times 1 \, \right| \,
1\le j\le l\right\},
\end{aligned}
\end{eqnarray}
where the line segments are the (bottom and top) bases of the ribbons.
Ribbon graphs are defined up to isotopies of $\R^2\times [0, 1]$,
which preserve orientation and are constant on the boundary intervals \eqref{eq:bases}.

For simplicity, we will represent ribbons and annuli by their
directed cores.

There are two operations,  composition and juxtaposition,  of ribbon graphs.
Let $\Gamma$, $\Gamma_1$ and $\Gamma'$ respectively be $(k,\ l)$,  $(l,\ m)$ and
and $(k',\ l')$ ribbon graphs. The composition $\Gamma_1\circ\Gamma$ is defined
in the following way: shift  $\Gamma_1$ by the vector $(0,0,1)$
into $\R^2\times [1,2]$, glue the bottom end of
$\Gamma_1$ to the top end of  $\Gamma$ so that
the orientations and directions of the cores of the ribbons glued together match,
then reduce vertically the size of the resultant picture  by a factor of
$2$, leading to a $(k, \ m)$ ribbon graph.
The juxtaposition  $\Gamma \otimes  \Gamma'$ is to position
$\Gamma'$ on the right of  $\Gamma$, leading to a
$(k+k',\ l+l')$ ribbon graph.

Introduce the set ${\mathcal N}$ consisting of sequences
$(\varepsilon_1, \varepsilon_2, ..., \varepsilon_k)$, where  $k\in {\Z}_+$
and all $\varepsilon_i\in \{+, \ - \}$.
Each ribbon $(k, \ell)$-graph $\Gamma$ is associated with two elements of ${\mathcal N}$,
the source $s(\Gamma)=(\varepsilon_1,   \varepsilon_2, ...,
\varepsilon_k)$ and target $t(\Gamma)=(\varepsilon'_1,  \varepsilon'_2, ...,
 \varepsilon'_\ell)$,
in the following way.
If a ribbon of $\Gamma$ has a base $[p-{\frac{1}{4}},
p+{\frac{1}{4}}]\times 0\times 0$
(resp.  $[r-{\frac{1}{4}}, r+{\frac{1}{4}}]\times 0\times 1$), then
$\varepsilon_p=+$ (resp. $\varepsilon'_r=-)$
if its core is directed towards this base, and $\varepsilon_p=-$
(resp. $\varepsilon'_r=+)$ otherwise.
Fix a commutative ring $\K$. Given any two elements $\eta, \eta'\in{\mathcal N}$,
we denote by $\Hom(\eta, \eta')$ the free $\K$-module with a basis consisting of
isotopy classes of ribbon graphs $\Gamma$ such that $s(\Gamma)=\eta$
and $t(\Gamma)=\eta'$.

\begin{definition}\label{def:ribbon-direct}
The category $\cH(\K)$ of ribbon graphs is the $\K$-linear category, which has the set of
objects $\mathcal N$, and sets of morphisms $\Hom(\eta, \eta')$  as defined above
for any $\eta, \eta'\in{\mathcal N}$. The composition of morphisms
is given by the composition of ribbon graphs.
\end{definition}

It has the structure of a ribbon category.
The associative tensor product $\otimes: \cH(\K)\times\cH(\K)
\rightarrow\cH(\K)$ is defined so that the tensor product of objects
$\eta$ and $\eta'$ is to join $\eta'$ to the right of $\eta$
to form one sequence, and the tensor product of morphism is given by
the juxtaposition of ribbon graphs defined earlier.
This is a braided tensor category with the braiding given by

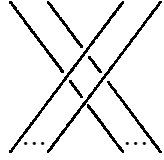
\begin{figure}[h]
\begin{center}
\setlength{\unitlength}{.5mm}
\begin{picture}(80, 35)(-10, 15)
\qbezier(10, 10)(25, 30)(40, 50)
\qbezier(20, 10)(35, 30)(50, 50)
\qbezier(40, 10)(25, 30)(34, 18)
\qbezier(10, 50)(25, 30)(23.8, 31.6)
\qbezier(50, 10)(47, 14)(36, 28.7)
\qbezier(20, 50)(26, 42)(29, 38)
\qbezier(31, 35.3)(32, 34)(34, 31.3)
\qbezier(26, 28.7)(25, 30)(29, 24.7)
\put(13, 12){...}
\put(40, 12){...}
\end{picture}
\end{center}
\caption{Braiding}
\label{fig:braiding}
\end{figure}
\noindent
where the directions of the cores of ribbons
are understood to be consistent with sources and targets of the ribbon graphs.

The braided strict tensor category $\cH(\K)$ has the structure of a ribbon category,
with the duality $^\vee: \eta=(\varepsilon_1, \varepsilon_2, \dots, \varepsilon_k)
\mapsto \eta^\vee=(\varepsilon_k^\vee,   \varepsilon_{k-1}^\vee, ...,\varepsilon_1^\vee)$
on any object, where $+^\vee =-$ and $-^\vee =+$.
The left duality maps \eqref{eq:dual-maps}
are respectively depicted by the first two
ribbon graphs in Figure \ref{fig:duality-twist},
and the twist by the last ribbon graph  in Figure \ref{fig:duality-twist},
where again the directions of the cores of ribbons
are understood to be consistent with sources and targets of the ribbon graphs.

\begin{figure}[h]
\begin{center}
\setlength{\unitlength}{.5mm}
\begin{picture}(180, 50)(90, 0)
\qbezier(90, 50)(110, 5)(130, 50)
\qbezier(100, 50)(110, 20)(120, 50)
\put(92, 47){...}

\qbezier(150, 10)(170, 55)(190, 10)
\qbezier(160, 10)(170, 40)(180, 10)
\put(153, 12){...}

\put(120, -2){Duality}

\put(231, 47){...}
\qbezier(230, 50)(250,0)(258, 30)
\qbezier(240, 50)(250, 10)(255, 30)

\qbezier(255, 30) (252, 45) (247, 30)
\qbezier(258, 30) (253, 52) (246, 35)

\qbezier(244, 33)(243, 31)(241.5, 28)
\qbezier(246, 28)(245, 26)(244, 24)

\qbezier(240, 25)(239, 24)(230, 10)
\qbezier(243, 22)(242, 20)(238, 10)

\put(236, -2){Twist}

\end{picture}
\end{center}
\caption{Duality and twist}
\label{fig:duality-twist}
\end{figure}
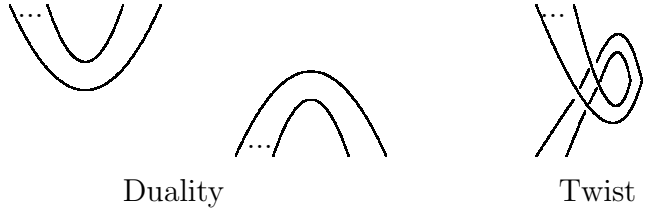

The category $\cH(\K)$ can be presented \cite{RT90, Turaev}
in terms of  the generators depicted in Figure 1, and the relations
given in, e.g., \cite{RT90, Turaev}.

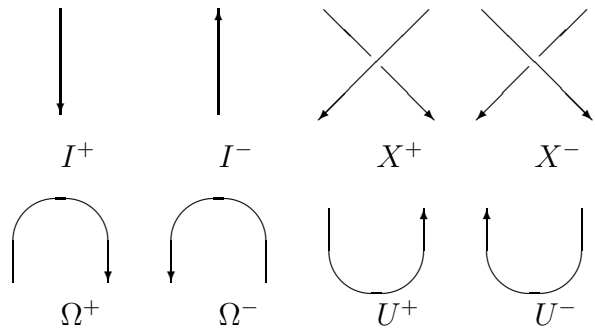
\begin{figure}[h]
\begin{center}
\setlength{\unitlength}{.7mm}
\begin{picture}(140, 65)
\put(10,0){$\Omega^+$}
\put(40,0){$\Omega^-$}
\put(70,0){$U^+$}
\put(100,0){$U^-$}
\put(10, 30){$I^+$}
\put(40,30){$I^-$}
\put(70,30){$X^+$}
\put(100,30){$X^-$}

\put(10,60){\vector(0,-1){20}}
\put(40,40){\vector(0,1){20}}

\put(69,51){\line(-1,1){9}}
\put(71,49){\vector(1,-1){10}}
\put(80,60){\vector(-1,-1){21}}

\put(90,60){\vector(1,-1){21}}
\put(101,51){\line(1,1){9}}
\put(99,49){\vector(-1,-1){10}}

\put(10,15){\oval(18,18)[t]}
\put(1,15){\line(0,-1){7}}
\put(19,15){\vector(0,-1){7}}

\put(40,15){\oval(18,18)[t]}
\put(31,15){\vector(0,-1){7}}
\put(49,15){\line(0,-1){7}}

\put(70,15) {\oval(18,18)[b]}
\put(61,15){\line(0,1){7}}
\put(79,15){\vector(0,1){7}}

\put(100,15){\oval(18,18)[b]}
\put(91,15){\vector(0,1){7}}
\put(109,15){\line(0,1){7}}
\end{picture}
\end{center}
\caption{Generators of ribbon graphs}
\label{fig:generators}
\end{figure}

The following theorem is a special case of the tensor functor \cite{RT90, Sh, Turaev}
from the category of coloured ribbon graphs to any given ribbon category.
The general result may be found in, e.g., \cite[Theorem 2.5]{Turaev}.
\begin{theorem} \label{thm:RT}
Let $(\cC, \otimes, I, c)$ be a $\K$-linear ribbon category with twist $\theta$ and left duality
$(^\vee, \Upsilon, \Omega)$.
Given any object $V$ in $\cC$, there  exists a unique braided tensor functor
$\cF: \cH(\K)\longrightarrow \cC$, which preserves left duality and twist, such that
\begin{eqnarray}\label{eq:functor}
\begin{aligned}
&\cF(\emptyset)=I, \quad \cF(+)=V, \quad \cF(-)=V^\vee;\\
& \cF(I^+)=\id_V, 	\quad F(I^-)=\id_{V^\vee}, \\
&\cF(X^+)=c_{V, V}:  V\otimes V \rightarrow V \otimes V,\\
&\cF(X^-)=c_{V, V}^{-1}:  V \otimes V \rightarrow V\otimes V,\\
& \cF(\Omega^+)= \Omega_V:  V^\vee\otimes  V\rightarrow   I, \\
&\cF(\Omega^-)= \Omega'_V: V\otimes  V^\vee\rightarrow  I, \\
&\cF(U^+)=\Upsilon_V:  I \rightarrow V\otimes  V^\vee,\\
&\cF(U^-)= \Upsilon'_V:  I \rightarrow V^\vee\otimes  V,
\end{aligned}
\end{eqnarray}
where $\Upsilon'$ and $\Omega'$ are defined by \eqref{eq:r-duality}.
\end{theorem}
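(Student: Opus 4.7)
The plan is to view Theorem \ref{thm:RT} as the specialisation to a single‑coloured strand of the universal property of the category of ribbon graphs established by Reshetikhin--Turaev, Freyd--Yetter and Shum. First I would define $\cF$ on the eight generators depicted in Figure \ref{fig:generators} by the formulas \eqref{eq:functor}, and then extend $\cF$ to arbitrary morphisms of $\cH(\K)$ by declaring it to commute with composition and juxtaposition of ribbon graphs and to be $\K$-linear. For this prescription to produce a well-defined braided tensor functor preserving left duality and twist, it remains to (a) check that all defining relations of $\cH(\K)$ are sent to true identities in $\cC$, and (b) establish uniqueness.

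Uniqueness is the easy half. Any braided tensor functor $\cF'\colon\cH(\K)\to\cC$ with $\cF'(+)=V$ that preserves left duality and twist is already determined on the generators: $\cF'(-)=V^\vee$ by duality, $\cF'(I^\pm)=\id$ because $\cF'$ is a functor, $\cF'(X^+)=c_{V,V}$ because $\cF'$ preserves braidings, $\cF'(\Omega^+)=\Omega_V$ and $\cF'(U^+)=\Upsilon_V$ by preservation of left duality, and finally $\cF'(\Omega^-)=\Omega'_V$, $\cF'(U^-)=\Upsilon'_V$ because in a ribbon category the right-duality structure is canonically determined from the left duality, braiding and twist by the formulas \eqref{eq:r-duality}, all of which are preserved by $\cF'$. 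Since every morphism of $\cH(\K)$ is a composition/juxtaposition of these generators, $\cF'=\cF$.

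The substance lies in verifying the relations. These relations are the combinatorial encoding of ambient isotopy of framed oriented tangles, and admit a finite presentation (see \cite[Chap.~I, XII]{Turaev}) consisting of: the zig-zag identities for the pairs $(\Omega^+,U^+)$ and $(\Omega^-,U^-)$; the Reidemeister II and III moves for $X^\pm$ (including $X^-=(X^+)^{-1}$); the naturality relations permitting one to slide caps and cups past crossings; and the framing (Reidemeister~I with twist) relation. Under \eqref{eq:functor} these translate respectively into the commutative diagrams \eqref{eq:dual-1}--\eqref{eq:dual-2}, the braid axiom together with invertibility of $c_{V,V}$, the naturality of $c$ combined with \eqref{eq:r-duality}, and the twist axioms $\theta_{V\otimes V}=(\theta_V\otimes\theta_V)c_{V,V}c_{V,V}$ and $\theta_{V^\vee}=(\theta_V)^\vee$. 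The main obstacle is purely bookkeeping: one must translate each geometric relation into categorical language and match it with the correct axiom. The most delicate checks are the mixed crossing/cap--cup relations, where the definition \eqref{eq:r-duality} of $\Omega'_V$ and $\Upsilon'_V$ must be used to convert a geometric slide into a composition of naturality squares for $c$. Each such verification is classical and follows the template of \cite[Thm.~XII.2.5]{Turaev}, so the theorem reduces to that result by colouring every strand of every ribbon graph by $V$.
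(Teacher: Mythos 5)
Your proof is correct and follows exactly the route the paper takes: the paper proves Theorem \ref{thm:RT} simply by observing that it is the single-colour specialisation of the Reshetikhin--Turaev/Freyd--Yetter/Shum functor from the category of coloured ribbon graphs to a ribbon category, citing \cite[Theorem 2.5]{Turaev}; your argument fleshes out the same reduction (colour every strand by $V$, identify the generator images, check the presentation relations via the ribbon-category axioms) and supplies the uniqueness argument in the expected way.
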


%
%
%
%

Non-directed ribbon graphs are
oriented surfaces embedded in $\R^2\times[0, 1]$ which can be decomposed into
ribbons and annuli with the usual properties. The only
difference is that the cores of ribbons and annuli in non-directed ribbon graphs are not directed.
Composition of non-directed ribbon graphs is defined in the usual way but without the
requirement on the directions of cores of ribbons.
We still represent non-directed ribbons and annuli by their cores.

\begin{definition} \label{def:ribbon-nondirect}
The category $\cH'(\K)$ of non-directed ribbon graphs
is the $\K$-linear category such that the objects are $0, 1, 2, \dots$,
and for each pair $(k, l)$ of objects,  the set $\Hom(k, \ell)$ of morphisms is the free $\K$-module
with a basis consisting of non-directed ribbon $(k, \ell)$-graphs. The composition of morphisms
is given by the composition of non-directed ribbon graphs.
\end{definition}

Then $\cH'(\K)$ is a braided strict tensor category with tensor product $\otimes$ being the juxtaposition
of non-directed ribbon graphs for morphisms, and $k\otimes \ell=k+\ell$ for objects.
The braiding is given by the non-directed ribbon graphs of the form Figure \ref{fig:braiding}. Furthermore,
$\cH'(\K)$ has the structure of a ribbon category. The duality for objects is $^\vee: k\mapsto k$,
the left duality maps defined by \eqref{eq:dual-maps} are
respectively given by non-directed ribbon graphs of the form shown in the the first two
diagrams in Figure \ref{fig:duality-twist},
and the twist by the non-directed ribbon graph shown in the last diagram in Figure \ref{fig:duality-twist}.

Note that $\cH'(\K)$ is an example of  the braided tensor categories with only self-dual objects
studied in \cite{Sl}.

The ribbon category $\cH'(\K)$ can be presented in terms of the generators depicted
in Figure \ref{fig:generators-nondirect}.
The full set of relations can be obtained from those given in \cite{RT90, Turaev} for
the category of ribbon graphs by ignoring the directions of the cores of ribbons.
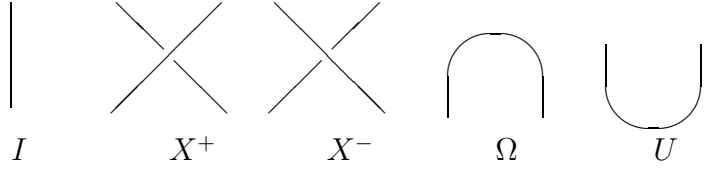
\begin{figure}[h]
\begin{center}
\setlength{\unitlength}{.7mm}
\begin{picture}(60, 40)(45, 30)
\put(40,30){$I$}
\put(70,30){$X^+$}
\put(100,30){$X^-$}

\put(40,40){\line(0,1){20}}

\put(69,51){\line(-1,1){9}}
\put(71,49){\line(1,-1){10}}
\put(80,60){\line(-1,-1){21}}

\put(90,60){\line(1,-1){21}}
\put(101,51){\line(1,1){9}}
\put(99,49){\line(-1,-1){10}}
\end{picture}
\begin{picture}(60, 40)(15, 0)
\put(40,0){$\Omega$}
\put(70,0){$U$}

\put(40,15){\oval(18,18)[t]}
\put(31,15){\line(0,-1){7}}
\put(49,15){\line(0,-1){7}}

\put(70,15) {\oval(18,18)[b]}
\put(61,15){\line(0,1){7}}
\put(79,15){\line(0,1){7}}
\end{picture}
\end{center}
\caption{Generators of non-directed ribbon graphs}
\label{fig:generators-nondirect}
\end{figure}

The proof of the following result is similar to that of Theorem \ref{thm:RT}.

\begin{theorem} \label{thm:RT-1}
Let $(\cC, \otimes, I, c)$ be a $\K$-linear ribbon category with twist $\theta$ and left duality
$(^\vee, \Upsilon, \Omega)$.
Given any object $V$ in $\cC$, which is self-dual in the sense that $V^\vee \cong V$,
there  exists a unique braided tensor functor
$\cF: \cH'(\K)\longrightarrow \cC$, which preserves left duality and twist, such that
\begin{eqnarray}\label{eq:functor-1}
\begin{aligned}
&\cF(0)=I, \quad \cF(1)=V, \quad \cF(I)=\id_V, 	 \\
&\cF(X^+)=c_{V, V}:  V\otimes V \rightarrow V \otimes V,\\
&\cF(X^-)=c_{V, V}^{-1}:  V \otimes V \rightarrow V\otimes V,\\
& \cF(\Omega)= \Omega_V:  V\otimes  V\rightarrow   I, \\
& \cF(U)=\Upsilon_V:  I \rightarrow V\otimes  V.
\end{aligned}
\end{eqnarray}
\end{theorem}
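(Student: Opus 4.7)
The plan is to model the proof on that of Theorem \ref{thm:RT} (which is the directed version in, e.g., \cite[Theorem 2.5]{Turaev}), exploiting the presentation of $\cH'(\K)$ by the generators $I, X^+, X^-, \Omega, U$ of Figure \ref{fig:generators-nondirect} together with the relations inherited from those of $\cH(\K)$ by forgetting all directions.  Uniqueness is immediate: any braided tensor functor preserving duality and twist is determined on objects by $\cF(1)=V$ (hence $\cF(k)=V^{\boxtimes k}$) and on morphisms by its values on the generators, which are prescribed by \eqref{eq:functor-1}.

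For existence, I would first verify that the assignment in \eqref{eq:functor-1} extends to a well-defined functor $\cF:\cH'(\K)\to\cC$.  The crucial point is that a (self-dual) object $V$ of a ribbon category satisfies exactly the algebraic identities imposed on the generators by the non-directed isotopy relations: the Reidemeister II/III moves for $X^\pm$ correspond to $c_{V,V}$ being an invertible solution of the braid equation; the zig-zag (snake) identities between $\Omega$ and $U$ are the duality axioms \eqref{eq:dual-1}--\eqref{eq:dual-2}; the twist relations from sliding a ribbon across a cup/cap correspond to the functoriality of $\theta$ together with the compatibility $\theta_{V\otimes V}=(\theta_V\otimes\theta_V)c_{V,V}c_{V,V}$; and the framing relation is built into the definition of the twist.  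Each such relation, stripped of direction labels, is precisely the corresponding relation from $\cH(\K)$.

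The main obstacle, and where self-duality enters, is to check that the non-directed relations which arise from identifying a directed ribbon graph with its reverse-oriented version are respected by $\cF$.  Concretely, the maps $\Omega$ and $U$ of $\cH'(\K)$ must simultaneously play the roles of $\Omega_V, \Omega'_V$ and of $\Upsilon_V, \Upsilon'_V$ respectively, as no orientation distinguishes these in the non-directed picture.  To handle this, I would fix an isomorphism $\phi:V\stackrel{\sim}{\to}V^\vee$ and use it to construct a factorisation
\begin{eqnarray*}
\cH'(\K) \stackrel{\tilde{\cF}}{\longrightarrow} \cC
\end{eqnarray*}
through the directed category: given a non-directed graph $\Gamma\in\cH'(\K)$, arbitrarily choose a direction on each ribbon to obtain a directed graph $\tilde\Gamma\in\cH(\K)$, apply the functor from Theorem \ref{thm:RT} to get a morphism in $\cC$, and then compose with appropriate tensor powers of $\phi$ and $\phi^{-1}$ to correct the source and target from $V^\vee$'s back to $V$'s.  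One then shows that the result is independent of the chosen directions by verifying the claim locally at each ribbon, where the issue reduces to the identities $\Omega'_V=\Omega_V\circ(\phi\otimes\phi^{-1})\circ\tau$ (and the analogous one for $\Upsilon'_V$) that follow from \eqref{eq:r-duality} together with the naturality of $c$ and $\theta$.  Provided $\phi$ is compatible with $\theta$ (which can be arranged by rescaling, since self-duality in a ribbon category automatically yields a $\theta$-compatible isomorphism), all ambiguities cancel and $\cF$ is well-defined.

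Finally, once $\cF$ is defined, braidedness follows from $\cF(X^+)=c_{V,V}$ and the strictness of both tensor categories; preservation of left duality and twist is built into the assignment \eqref{eq:functor-1} and the verification above.  This completes the proof, with the self-duality $V^\vee\cong V$ being the single feature of $V$ used beyond what appears in Theorem \ref{thm:RT}.
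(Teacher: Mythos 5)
Your proposal takes a different route from the paper's (which is an unelaborated citation of the directed Theorem~\ref{thm:RT} together with Turaev--Wenzl, implying a direct verification that \eqref{eq:functor-1} respects the relations in the presentation of $\cH'(\K)$). You instead propose to factor through the directed category $\cH(\K)$ by choosing orientations arbitrarily, applying Theorem~\ref{thm:RT}, and correcting with a fixed isomorphism $\phi\colon V\to V^\vee$.

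The well-definedness of that factorisation is where the gap lies. You assert independence of the chosen orientations, appealing to the claim that ``self-duality in a ribbon category automatically yields a $\theta$-compatible isomorphism'' which ``can be arranged by rescaling.'' This is not so. The undirected cup $\Omega$ lifts to both $\Omega^+$ and $\Omega^-$ in $\cH(\K)$, and after the $\phi$-correction the two lifts give $\Omega_V\circ(\phi\otimes\id_V)$ and $\Omega'_V\circ(\id_V\otimes\phi)$, a priori different maps $V\otimes V\to I$. Working through \eqref{eq:r-duality} with the naturality of $c$ and $\theta$, these two morphisms agree if and only if $b=b\circ c_{V,V}\circ(\id_V\otimes\theta_V)$, where $b=\Omega_V\circ(\phi\otimes\id_V)$. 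This condition is homogeneous of degree one in $b$, so rescaling $\phi$ is irrelevant, and for $V$ simple it is a genuine invariant of the self-duality, which fails in symplectic-type examples (e.g.\ the natural module of ordinary, non-super, quantum $\mathfrak{sp}_{2n}$). What saves the applications in this paper is the tacit hypothesis that the form on the natural $\osp_{m|2n}$-module is supersymmetric, for which the condition holds once Koszul signs are tracked. Note also that your displayed identity $\Omega'_V=\Omega_V\circ(\phi\otimes\phi^{-1})\circ\tau$ does not type-check and is not what \eqref{eq:r-duality} gives, a further indication that this step needs to be recomputed. The compatibility is a hypothesis to impose and verify, not a consequence of $V\cong V^\vee$; with it in place, both your factoring route and the paper's direct relation-check succeed, but as written your proposal asserts the crucial step rather than proving it.
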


The theorem was already implicitly present in the study of braided tensor categories of type BCD in \cite{TW}.

\section{Braided quasi Hopf superalgebras}\label{sect:quasi-Hopf} \label{append:quasi-Hopf}

We summarise some elementary facts on braided quasi Hopf superalgebras, 
which are used in the paper.
The notion of quasi Hopf algebras was introduced by Drinfeld in the seminal papers \cite{D90, D92}.
Generalisation to the super context was treated in \cite[\S II]{Z02} (see also \cite{G06}).

\subsection{Braided quasi Hopf superalgebras}\label{sect:bqHsa}
Fix any commutative ring $\K$ with identity.
Let $H$ be an associative $\K$-superalgebra
equipped with
even superalgebra homomorphisms
$\Delta: H\rightarrow H\otimes H$ and $\epsilon: H\rightarrow \K$,
respectively called the co-multiplication and co-unit,
such that $(\epsilon\otimes\id)\Delta= \id$ and $(\id\otimes\epsilon)\Delta= \id.$
The superalgebra $H$ is called  a quasi bi-superalgebra if
there exists an invertible even element
$\Phi\in H\otimes H\otimes H$, called the associator,
satisfying the following relations
\begin{eqnarray}\label{eq:associator}
\begin{aligned}
&(\id\otimes \Delta)\Delta(x) = \Phi (\Delta\otimes\id)\Delta(x)\Phi^{-1},  \quad \forall x\in H,\\
&(\id\otimes\epsilon\otimes\id)\Phi=1, \\
&(\id\otimes\id\otimes\Delta)(\Phi) (\Delta\otimes\id\otimes\id)(\Phi)
=\Phi_{2 3 4} (\id\otimes\Delta\otimes \id)(\Phi) \Phi_{1 2 3}.
\end{aligned}
\end{eqnarray}

Let us write
$
\Phi=\sum_{t} X_t\otimes Y_t\otimes Z_t$ and $\Phi^{-1}=\sum_{t}\overline{X}_t \otimes \overline{Y}_t \otimes
\overline{Z}_t.
$
We also write $\Delta(x)=\sum_{(x)}x_{(1)}\otimes x_{(2)}$ for any $x\in H$.
The quasi bi-superalgebra is a quasi Hopf superalgebra if there exist invertible
even elements $\alpha, \beta\in H$, and an algebra anti-automorphism
$S$, such that
\begin{eqnarray}\label{eq:antipode}
\begin{aligned}
&\sum_{(x)} S(x_{(1)})\alpha x_{(2)} = \epsilon(x) \alpha,\quad
\sum_{(x)} x_{(1)} \beta S(x_{(2)}) = \epsilon(x) \beta,
\quad \forall x\in H, \\
&\sum_{t} X_t\beta S(Y_t) \alpha Z_t =
\sum_{t} S(\overline{X}_t) \alpha \overline{Y}_t \beta S(\overline{Z}_t)= 1.
\end{aligned}
\end{eqnarray}

It immediately follows from the definition that
$
\epsilon(\alpha)\epsilon(\beta)=1
$
and
$
\epsilon S = \epsilon.
$
Call $(S, \alpha, \beta)$ an antipode triple (or simply an antipode).
For any invertible element $g\in H$, the conditions (\ref{eq:antipode})
are also satisfied by $\tilde{S}, \tilde{\alpha},  \tilde{\beta}$ with
\begin{eqnarray}\label{eq:S-trans}
\tilde{\alpha}= g \alpha,&
\tilde{\beta}=\beta g^{-1},  & \tilde{S}(a)=g S(a) g^{-1},\ \forall a\in H.
\end{eqnarray}
We call $g$ an antipodal transformation.
Any two antipode triples of a quasi Hopf superalgebra
are related to each other by a unique antipodal transformation \cite{D90, D92}.

A quasi Hopf superalgebra is called braided if there exists an
invertible even  element $R\in H\otimes H$, called the universal
$R$-matrix, which
satisfies the following relations
\begin{eqnarray}\label{eq:braid}
\begin{aligned}
R \Delta(x) &= \Delta^{op}(x) R, \quad \forall x\in H, \\
(\id\otimes \Delta)R &=(\Phi_{2 3 1})^{-1} R_{ 13} \Phi_{2 1 3} R_{ 1
2} (\Phi_{1 2 3})^{-1}, \\
(\Delta\otimes\id)R&=\Phi_{3 1 2} R_{ 13} (\Phi_{1 3 2})^{-1}
R_{2 3} \Phi_{1 2 3},
\end{aligned}
\end{eqnarray}
where $\Delta^{op}$ is the opposite co-multiplication.
It immediately follows from the definition that the universal
$R$-matrix satisfies the following generalized Yang-Baxter equation
\begin{eqnarray}
R_{1 2} \Phi_{3 1 2} R_{ 13} (\Phi_{1 3 2})^{-1} R_{2 3} \Phi_{1 2 3}
=\Phi_{3  2 1} R_{2 3} (\Phi_{2 3 1})^{-1} R_{ 13} \Phi_{2 1 3} R_{1 2}.
\end{eqnarray}

Given a braided quasi Hopf superalgebra, we
write the universal $R$-matrix as $R=\sum_{r} a_r\otimes b_r$, and let
\begin{eqnarray}\label{eq:u}
u=\sum_{r, t} (-1)^{[\overline{X}_t]}
S(b_r \overline{Y}_t\beta S(\overline{Z}_t))\alpha a_r \overline{X}_t,
\end{eqnarray}
where $[\overline{X}_t]=0$ or $1$ is the parity of $\overline{X}_t$.
Then $u$ is invertible, $S^2(x)=u x u^{-1}$ for all $x\in H$,
and  $u S(u)=S(u) u$ belongs to the centre of $H$.
If there exists an even element $v\in H$ such that
\begin{eqnarray}\label{eq:v}
v^2 =u S(u),
\end{eqnarray}
we call $(H, \Delta, \epsilon, \Phi, S, \alpha, \beta, R, v)$
a ribbon quasi Hopf superalgebra with the ribbon element $v$.
We have
$
\Delta(v)=(v\otimes v) (R_{2 1} R)^{-1}.
$

%
%
%
%

Given a ribbon quasi Hopf superalgebra
$(H, \Delta, \epsilon, \Phi, S,  \alpha, \beta, R, v)$,  we take any
invertible even element $F\in H\otimes H$ satisfying
$(\epsilon\otimes \id)F = (\id\otimes\epsilon)F =1,$
and let
\begin{eqnarray}\label{eq:gauge}
\begin{aligned}
&\Delta_F:  H\rightarrow H\otimes H, \quad
    x\mapsto F \Delta(x) F^{-1}, \\
&\Phi_F =F_{2 3} (\id\otimes \Delta) (F) \Phi (\Delta\otimes\id)
(F^{-1}) F_{1 2}^{-1}, \\
&R_F= F_{2 1} R F^{-1}, \\
&\alpha_F=m(S\otimes\id)((1\otimes\alpha)F^{-1}),\\
&\beta_F=m(\id\otimes S)(F(\beta\otimes 1)),
\end{aligned}
\end{eqnarray}
where $m$ denotes the multiplication of $H$.
Then
$(H, \Delta_F, \epsilon, \Phi_F, S, \alpha_F, \beta_F, R_F, v)$ is another ribbon quasi Hopf superalgebra.
Call $F$ a gauge transformation on $H$.
\begin{remark}\label{rem:inv-ribbon-transform}
Note that the anti-homomorphism $S$ is not affected by gauge transformations,
and the ribbon element $v$ is not affected by gauge and antipodal transformations.
\end{remark}

Given ribbon quasi Hopf superalgebras,
\begin{eqnarray*}
(H, \Delta^{(H)}, \epsilon^{(H)},
\Phi^{(H)}, S^{(H)}, \alpha^{(H)}, \beta^{(H)},  R^{(H)},  v^{(H)})\\
(B, \Delta^{(B)}, \epsilon^{(B)}, \Phi^{(B)},
S^{(B)}, \alpha^{(B)}, \beta^{(B)},  R^{(B)},  v^{(B)}),
\end{eqnarray*}
and assume that we have an even superalgebra homomorphism
$f: H\rightarrow B$ and
a gauge transformation $F\in B\otimes B$ on $B$ such that
\begin{eqnarray*}
\begin{aligned}
&(f\otimes f)\Delta^{(H)}= F \cdot ( \Delta^{(B)} f )\cdot F^{-1},\\
&(f\otimes f\otimes f)(\Phi^{(H)})
=\left(F_{2 3}(\id\otimes \Delta^{(B)})(F) \right)^{-1} \Phi^{(B)} F_{1 2}(\Delta^{(B)}\otimes \id)F,\\
&(f\otimes f)R^{(H)}= F_{2 1} R^{(B)} F^{-1}, \quad v^{(B)}=f(v^{(H)}).
\end{aligned}
\end{eqnarray*}
Then there exists a unique invertible $g\in B$ such that
\begin{eqnarray*}
\begin{aligned}
&(f \circ S^{(H)}) (x) =g (S^{(B)}\circ f) (x)  g^{-1}, \quad \forall x\in H, \\
&f(\alpha^{(H)})= g \alpha^{(B)}_F, \quad
f(\beta^{(H)})= \beta^{(B)}_F g^{-1} .
\end{aligned}
\end{eqnarray*}
\begin{definition}\label{def:equiv-ribbon-quasi-Hopf}
Call the triple $(f, F, g)$ a homomorphism of ribbon quasi Hopf superalgebras.
If $f$ is a superalgebra isomorphism,  it is called an equivalence.
\end{definition}

\subsection{Representations of braided quasi Hopf superalgebras}\label{sect:quasi-module-cat}
Given a braided quasi Hopf superalgebra
$(H, \Delta, \epsilon, \Phi, S, \alpha, \beta, R)$,  we 
denote by $H$-Mod the the category of $\Z_2$-graded left $H$-modules equipped with the
tensor product functor $\otimes_\K$. Note that all morphisms in $H$-Mod are even.
Then $H$-Mod is a braided tensor category with the following structure.
\begin{enumerate}
\item The unit object is $\K$, and the left and right unit constraints are the identity.

\item The associativity constraint
\[
\begin{aligned}
&a_{U, V, W}:  (U\otimes V) \otimes W \longrightarrow U\otimes (V \otimes W), \\
&a_{U, V, W}((u\otimes v) \otimes w) = u\otimes (v \otimes w), \quad \forall u\in U, \ v\in V, w\in W
\end{aligned}
\]
is given for all $ u\in U,\  v\in V, \ w\in W$  by
\[
a_{U, V, W}((u\otimes v) \otimes w) =\Phi_{U, V, W}((v\otimes v) \otimes w) = \Phi((u\otimes v) \otimes w).
\]
\item The braiding $c$ is defined as follows.
Let $\check{R}_{V, W}: V\otimes W\longrightarrow W\otimes V$
for any objects $V, W$ of $H$-Mod  be defined by the composition
\[
V\otimes W\stackrel{R}{\longrightarrow} V\otimes W\stackrel{\tau_{V, W}}{\longrightarrow} W\otimes V,
\]
where $\tau_{V, W}$ is the family of natural isomorphisms
\begin{eqnarray}\label{eq:tau}
\tau_{V, W}: V\otimes W\longrightarrow W\otimes V, \quad v\otimes w\mapsto (-1)^{[v][w]} w\otimes v.
\end{eqnarray}
Then the braiding $c_{V, W}: V\otimes W\longrightarrow W\otimes V$ is given by
$
c_{V, W}=\check{R}_{V, W}.
$
\end{enumerate}

Given a gauge transformation $F$ on $H$,
denote $H_F=(H, \Delta_F, \epsilon, \Phi_F, S, \alpha_F, \beta_F, R_F)$.
For any objects $V, W\in \HMod$,
define
\begin{eqnarray}\label{eq:phi-2-F}
\varphi_2^F(V, W)(v\otimes w) = F^{-1}(v\otimes w), \quad \forall v\in V, \ w\in W.
\end{eqnarray}
\begin{lemma}\label{lem:gauge-cat}
With notation as above,
$(\id, \id, \varphi_2^F): \HMod\longrightarrow\text{$H_F$-Mod}$ is a braided tensor equivalence.
\end{lemma}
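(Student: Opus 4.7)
The lemma is essentially the well-known statement that a gauge transformation gives rise to an equivalence of the associated braided tensor categories; the proof consists of checking that each axiom for a braided tensor functor translates exactly into one of the defining formulas of $H_F$ given in \eqref{eq:gauge}.

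The plan is as follows. First, since $H$ and $H_F$ coincide as associative superalgebras and only their Hopf-theoretic data differ, the identity functor on $\Z_2$-graded left $H$-modules is a bijection on both objects and Hom-sets; the only content of the equivalence lives in the tensor structure. I would begin by verifying that $\varphi_2^F(V,W)(x):=F^{-1}x$ really is a morphism from $\id(V)\otimes_{H_F}\id(W)$ to $\id(V\otimes W)$ in $H_F\text{-Mod}$: for $h\in H$,
\[
\varphi_2^F\bigl(\Delta_F(h)\cdot x\bigr)=F^{-1}F\Delta(h)F^{-1}x=\Delta(h)\cdot\varphi_2^F(x),
\]
using $\Delta_F=F\Delta F^{-1}$. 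Invertibility and naturality in $V,W$ are then immediate.

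Next I would check the three coherence axioms. For the unit constraints, the identity $(\epsilon\otimes\id)F=(\id\otimes\epsilon)F=1$ forces $\varphi_2^F(\K,V)$ and $\varphi_2^F(V,\K)$ to act as the identity, so compatibility with $\ell$ and $r$ together with $\varphi_0=\id_\K$ is automatic. For associativity, evaluate both routes of the pentagon diagram on a vector $(u\otimes v)\otimes w\in (U\otimes_F V)\otimes_F W$; remembering that when $\varphi_2^F(U\otimes V,W)$ (respectively $\varphi_2^F(U,V\otimes W)$) is applied, $F^{-1}$ acts in the first two (resp. last two) slots through $(\Delta\otimes\id)(F^{-1})$ (resp. $(\id\otimes\Delta)(F^{-1})$), the identity required is precisely
\[
\Phi\cdot(\Delta\otimes\id)(F^{-1})\cdot F_{12}^{-1}
= (\id\otimes\Delta)(F^{-1})\cdot F_{23}^{-1}\cdot \Phi_F,
\]
which is exactly the defining formula for $\Phi_F$ in \eqref{eq:gauge}.

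For the braided property I would use the computation $\tau_{V,W}\circ X=\sigma(X)\circ\tau_{V,W}$ for $X\in H\otimes H$, where $\sigma$ is the graded swap (an algebra homomorphism), so that $c_{V,W}=\sigma(R)\tau_{V,W}$ and $c^F_{V,W}=\sigma(R_F)\tau_{V,W}$. Combining $\sigma(F_{21})=F$ and $\sigma(F^{-1})=F_{21}^{-1}$ with $R_F=F_{21}RF^{-1}$, one verifies
\[
\varphi_2^F(W,V)\circ c^F_{V,W}=\sigma(R)\,F_{21}^{-1}\,\tau_{V,W}=c_{V,W}\circ\varphi_2^F(V,W),
\]
so the braiding square commutes. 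Finally, since the underlying functor $\id$ is the identity on objects and Hom-sets, it is trivially essentially surjective and fully faithful; a quasi-inverse is given by $(\id,\id,\varphi_2^{F^{-1}})$ with the roles of $H$ and $H_F$ swapped, completing the equivalence.

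The only nontrivial step is the associativity diagram: writing the two paths as operators on $U\otimes V\otimes W$ and matching them against $\Phi_F$ requires care in tracking where each copy of $F$ or $F^{-1}$ is inserted via $\Delta\otimes\id$ or $\id\otimes\Delta$. Once this bookkeeping is done, everything falls out of the formulas \eqref{eq:gauge}, and no further ingredient is needed.
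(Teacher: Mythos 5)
The paper asserts Lemma~\ref{lem:gauge-cat} without proof, so there is no official argument to compare against; you have supplied a complete verification, and it is correct. Each coherence axiom for the tensor functor $(\id,\id,\varphi_2^F)$ is matched to the corresponding formula in \eqref{eq:gauge}, and the computations all check out: the map $\varphi_2^F$ intertwines $\Delta_F$ and $\Delta$ by cancellation of $F$, the unit constraints are trivial by $(\epsilon\otimes\id)F=(\id\otimes\epsilon)F=1$, the associativity check rearranges directly to the defining formula for $\Phi_F$, and the braiding square commutes using $R_F=F_{21}RF^{-1}$ together with $\tau\circ L_X=L_{\sigma(X)}\circ\tau$. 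The only quibble is terminological: what you call the ``pentagon diagram'' is the hexagon-shaped associativity axiom for a tensor functor, not the pentagon axiom of a monoidal category; the argument itself is unaffected.
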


Let
$(\alpha, F, g): (H, \Delta, \epsilon, \Phi, S, \alpha, \beta, R)
\longrightarrow (H', \Delta', \epsilon', \Phi', S', \alpha', \beta', R')$
be an equivalence of braided quasi bi-superalgebras with
a gauge transformation $F$ on $H'$ and isomorphism
$\alpha: H\longrightarrow H'_F$ of quasi bi-superalgebras.
Let $\alpha^*: H'_F\text{-Mod}\longrightarrow \HMod$ be
the equivalence of categories induced by $\alpha$.

\begin{theorem}\label{thm:braided-gauge}
The braided tensor functor
$(\alpha^*, \id, \varphi_2^F): H'\text{-Mod}\longrightarrow \HMod$ is a braided tensor equivalence.
\end{theorem}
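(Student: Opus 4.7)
The plan is to factor the asserted functor as a composition of two braided tensor equivalences, one corresponding to the gauge transformation $F$ on $H'$ and one corresponding to the superalgebra isomorphism $\alpha:H\to H'_F$. Concretely, I would write $(\alpha^*,\id,\varphi_2^F)$ as the composite
\[
H'\text{-Mod}\xrightarrow{(\id,\id,\varphi_2^F)}H'_F\text{-Mod}\xrightarrow{(\alpha^*,\id,\id)}H\text{-Mod}.
\]
The first arrow is covered directly by Lemma \ref{lem:gauge-cat}, which was stated just above the theorem and asserts precisely that gauge transformation by $F$ gives a braided tensor equivalence with the indicated coherence data.

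For the second arrow, I would check that the pullback along the algebra isomorphism $\alpha:H\to H'_F$ is a strict braided tensor equivalence. The functor $\alpha^*$ is clearly an equivalence of $\K$-linear categories since $\alpha$ is an isomorphism of associative superalgebras. Because $\alpha$ is, by hypothesis, a homomorphism of braided quasi bi-superalgebras $H\to H'_F$, it intertwines the co-multiplications, co-units, associators and universal $R$-matrices on the two sides; hence the induced functor preserves tensor product of objects and morphisms on the nose, sends the unit object to the unit object, and carries the associativity constraint $a^{H'_F}_{U,V,W}$ (action of $\Phi_F$) and the braiding $c^{H'_F}_{V,W}=\tau\circ R_F$ to the corresponding constraints on $\HMod$. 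Therefore $(\alpha^*,\id,\id)$ is a braided tensor equivalence in the strict sense.

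Composing the two, the coherence isomorphisms multiply to give $(\id,\id,\varphi_2^F)$ on the outside, and the underlying functor is exactly $\alpha^*$ (i.e.\ restrict an $H'$-module along $\alpha$, treating it as $H'_F$-module first by $\id$, then as $H$-module via $\alpha$). A small bookkeeping step is needed to confirm that the tensor coherence of a composition of tensor functors $(F_2,\varphi_0^{(2)},\varphi_2^{(2)})\circ(F_1,\varphi_0^{(1)},\varphi_2^{(1)})$ is given by $(F_2F_1,F_2(\varphi_0^{(1)})\circ\varphi_0^{(2)},F_2(\varphi_2^{(1)})\circ\varphi_2^{(2)})$; since in our case $\varphi_2^{(2)}=\id$ and the $H'_F$-action on $V\ot W$ is just the $H'$-action pulled back by $\alpha$, this composition evaluates to the stated $\varphi_2^F$ after identifying $F^{-1}\in H'\otimes H'$ with its image in the endomorphisms via $\alpha$.

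The step I expect to be the main obstacle is not in the composition itself but in the justification of Lemma \ref{lem:gauge-cat}, namely verifying that $\varphi_2^F$ as defined in \eqref{eq:phi-2-F} really intertwines the associativity and braiding constraints of $\HMod$ and $H_F\text{-Mod}$. This reduces to the identities relating $\Phi$, $\Phi_F$, $R$ and $R_F$ from \eqref{eq:gauge}: the pentagon for $\varphi_2^F$ is equivalent to the defining formula $\Phi_F=F_{23}(\id\ot\Delta)(F)\,\Phi\,(\Delta\ot\id)(F^{-1})F_{12}^{-1}$, and braided-ness of the functor reduces to $R_F=F_{21}RF^{-1}$, both of which hold by assumption. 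Once these coherence identities have been written out explicitly they are purely formal, so although the bookkeeping is a bit delicate in the super setting (signs from the $\Z_2$-grading must be tracked carefully in $\tau$ and in $\varphi_2^F$), there is no essential difficulty.
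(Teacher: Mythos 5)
Your proposal is correct and takes essentially the same route as the paper: the paper's proof also factors $(\alpha^*,\id,\varphi_2^F)$ as $(\alpha^*,\id,\id)\circ(\id,\id,\varphi_2^F)$, invoking Lemma \ref{lem:gauge-cat} for the first arrow and the obvious pullback equivalence for the second. Your additional discussion of why Lemma \ref{lem:gauge-cat} holds and of the composition bookkeeping is supplementary elaboration, not a divergence from the paper's argument.
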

\begin{proof}
We have the obvious tensor equivalence
$(\alpha^*, \id, \id): H'_F\text{-Mod}\longrightarrow \HMod$. It then follows from Lemma \ref{lem:gauge-cat} that
the composition
\[
\xymatrix{
H'\text{-Mod}\ar[rr]^{(\id, \id, \varphi^F_2)}&&H'_F\text{-Mod}\ar[rr]^{(\alpha^*, \id, \id)}&& \HMod
}
\]
gives the desired braided tensor equivalence.
\end{proof}

\subsection{Duality and twist}\label{append:sect-du-tw}
%
%

%
%
%
%

Let $H\text{-Mod}_f$ denote the full subcategory of $H\text{-Mod}$ with objects
which are $\K$-free of finite ranks. Clearly $H\text{-Mod}_f$ is a braided tensor category.
For any object $M$ of $H\text{-Mod}_f$, we denote by
$M^*:=\Hom_\K(M, \K)$ its dual, which is a free $\K$-module of the same rank as that of $M$.
The antipode enables us to turn $M^*$ into a left $H$-module with the action
$H\otimes M^*\rightarrow M^*$, $x\otimes v^*\mapsto x v^*$,  defined by
\begin{eqnarray}\label{eq:dual}
x v^*(w)= v^*\left((-1)^{[x][v^*]} S(x) w\right), \quad \text{for all $w\in M$}.
\end{eqnarray}
Let $\{b_i\mid i=1, \dots, r\}$
be a homogeneous $\K$-basis for $M$, and $\{b^*_i\mid i=1, \dots, r\}$ a $\K$-basis for $M^*$
such that $b^*_i(b_j)=\delta_{i j}$.
We have the following functorial homomorphisms in $H\text{-Mod}_f$.
\begin{eqnarray}\label{eq:ev+exp}
\begin{aligned}
&\Omega_M: M^*\otimes M\longrightarrow \K,
\quad  v^*\otimes w\mapsto v^*(\alpha w), \\
&\Upsilon_M: \K \longrightarrow M\otimes M^*,
\quad 1\mapsto \sum_i \beta b_i \otimes b^*_i.
\end{aligned}
\end{eqnarray}
Equations \eqref{eq:dual} and \eqref{eq:ev+exp} define left duality (cf. \eqref{eq:dual-maps})
for the braided tensor category $H\text{-Mod}_f$. Note the appearance of $\alpha$ and $\beta$
in the above maps. A word of warning is that the usual
dual space pairing is not an $H$-module homomorphism if $\alpha\ne 1$.

\begin{notation}\label{notation}
Let $H\text{-mod}$ be the strict tensor category
associated with $H\text{-Mod}_f$.
\end{notation}
Then $H\text{-mod}$ is a braided strict tensor category (see Theorems \ref{def:strict-cat} and  \ref{thm:equiv-braid-tensor-cat}).
If $H$ is a ribbon quasi Hopf superalgebra with the ribbon element $v$, then
\begin{eqnarray}\label{eq:theta}
\theta_M=v: M\longrightarrow M
\end{eqnarray}
defines a twist (cf. Section \ref{sect:ribbon-cat}) for $H\text{-mod}$.
Therefore, we have the following result.
\begin{theorem}\label{thm:key}
Let $(H, \Delta, \epsilon, \Phi, S, \alpha, \beta, R, v)$
be a ribbon quasi Hopf superalgebra. Then $H\text{-mod}$
is a ribbon category, with the left duality defined by \eqref{eq:ev+exp},
and the twist defined by
\eqref{eq:theta}.
\end{theorem}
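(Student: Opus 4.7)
The plan is to assemble the ribbon category structure on $H\text{-mod}$ in three stages: first install a left duality on $H\text{-Mod}_f$, then transfer everything to the strict version $H\text{-mod}$ via Theorems \ref{thm:tensor-equiv} and \ref{thm:equiv-braid-tensor-cat}, and finally verify the two twist axioms for $\theta_M=v$. The braided tensor category structure on $H\text{-Mod}_f$ was already fixed in Section \ref{sect:quasi-module-cat}; it restricts to $H\text{-Mod}_f$ because tensor products and duals of $\K$-free modules of finite rank are again of that type.

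For the left duality, I would first check that the maps $\Omega_M$ and $\Upsilon_M$ in \eqref{eq:ev+exp} are genuine $H$-module morphisms. For $\Omega_M$, the calculation $x\cdot(v^*\otimes w)=\sum_{(x)}(-1)^{[x_{(1)}][v^*]}(S(x_{(1)})v^*)\otimes(x_{(2)}w)$ collapses to $\epsilon(x)\,v^*(\alpha w)$ via the first identity in \eqref{eq:antipode}; the corresponding check for $\Upsilon_M$ is the second identity in \eqref{eq:antipode}. I would then verify the triangle-type diagrams \eqref{eq:dual-1} and \eqref{eq:dual-2}: unfolded, the composite along the top of \eqref{eq:dual-1} sends $w\in M$ to $\sum_t\sum_i X_t\beta b_i\otimes (Y_t b_i^*)(\alpha(Z_t w))$, and using $(Y_tb_i^*)(\alpha Z_tw)=b_i^*(S(Y_t)\alpha Z_tw)$ and summing over the dual basis gives $\sum_t X_t\beta S(Y_t)\alpha Z_t\cdot w$, which equals $w$ by the third equation of \eqref{eq:antipode}. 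Diagram \eqref{eq:dual-2} is handled symmetrically using the companion identity $\sum_t S(\overline X_t)\alpha\overline Y_t\beta S(\overline Z_t)=1$.

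Having equipped $H\text{-Mod}_f$ with its braiding, associator $\Phi$, and left duality, I would invoke the construction in Section \ref{sect:tensor}, applied in the braided setting of Section \ref{sect:ribbon-cat}, to pass to the associated strict tensor category $H\text{-mod}$. By Theorem \ref{thm:equiv-braid-tensor-cat}, the associated $(\cF,\id_I,\varphi)\colon H\text{-mod}\to H\text{-Mod}_f$ is a braided tensor equivalence, and the left duality on $H\text{-Mod}_f$ transports to a left duality on $H\text{-mod}$ (with the $\Phi$-coherence hidden inside the natural isomorphisms $\varphi$). This delivers a braided strict tensor category with left duality, so only the twist axioms remain.

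For the twist, $\theta_M=v\colon M\to M$ is a morphism in $H\text{-mod}$ because $v$ is central in $H$ (being a specified square root of the central element $uS(u)$, one which is compatible with the other ribbon data). Naturality in $M$ is then automatic. The key axiom $\theta_{V\boxtimes W}=(\theta_V\boxtimes\theta_W)\circ c_{W,V}\circ c_{V,W}$ follows directly from the identity $\Delta(v)=(v\otimes v)(R_{21}R)^{-1}$ recorded just after \eqref{eq:v}, once one rewrites $c_{W,V}c_{V,W}$ on $V\otimes W$ as the action of $R_{21}R$ composed with the appropriate sign rearrangement. The remaining axiom $\theta_{V^\vee}=(\theta_V)^\vee$ reduces, via the explicit dual action \eqref{eq:dual}, to the identity $S(v)=v$, which is forced by the fact that $v^2=uS(u)$ is $S$-invariant and $v$ is the distinguished central square root. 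I expect the main obstacle to be bookkeeping in the duality diagrams: in the quasi Hopf setting the associator $\Phi$ intrudes nontrivially, so the verifications of \eqref{eq:dual-1} and \eqref{eq:dual-2} require careful use of all three lines of \eqref{eq:antipode}, and it is at this point that the interplay between $\alpha$, $\beta$, $\Phi$ and $S$ must be handled with precision rather than by symbolic manipulation alone.
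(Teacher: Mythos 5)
Your proposal follows essentially the same approach as the paper: the paper presents Theorem~\ref{thm:key} as the culmination of the preceding construction (left duality on $H\text{-Mod}_f$ via \eqref{eq:ev+exp} and the antipode axioms \eqref{eq:antipode}, strictification via Theorems~\ref{def:strict-cat} and~\ref{thm:equiv-braid-tensor-cat}, then the twist from $v$), without writing out the verifications, and your proof unpacks exactly this chain. One small point worth flagging: you assert that $S(v)=v$ is ``forced'' by $v^2=uS(u)$ being $S$-invariant, but $S(v)^2=v^2$ alone does not determine $S(v)=v$ in general; in the concrete cases at hand ($v=1+o(t)$ in a formal power series ring) the distinguished square root is unique and $S$-stable, but in the abstract definition of a ribbon element this should really be taken as an additional hypothesis rather than derived.
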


%
%

Let  $F$ be a gauge transformation which takes
the ribbon quasi Hopf superalgebra $(H, \Delta, \epsilon, \Phi, S, \alpha, \beta, R, v)$
to $(H_F=H, \Delta_F, \epsilon, \Phi_F, S, \alpha_F, \beta_F, R_F, v)$,
where $\Delta_F$, $\Phi_F$, $R_F$ etc. are defined by \eqref{eq:gauge}.
Lemma \ref{lem:gauge-cat} gives a braided tensor equivalence
$(\id, \id, \varphi_2^F): \text{$H$-mod}\longrightarrow \text{$H_F$-mod}$
with $\varphi_2^F$ defined by \eqref{eq:phi-2-F}.

\begin{lemma}\label{lem:gauge-duality}
The braided tensor equivalence of Lemma \ref{lem:gauge-cat} induced
by any gauge transformation preserves duality and twist.
\end{lemma}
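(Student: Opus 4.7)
The plan is to check separately that the braided tensor equivalence preserves twist and preserves (left) duality. Since $\cI = \id$ on underlying modules, and both the antipode $S$ and the ribbon element $v$ are unchanged by a gauge transformation (Remark \ref{rem:inv-ribbon-transform}), the $H_F$-module structure on $M^*$ coincides with the $H$-module structure on $M^*$, and the twist $\theta_M = v : M \to M$ is literally the same linear map in both categories. This handles twist immediately.

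For duality, one must verify the two compatibility identities
\[
\Omega_M \circ \varphi_2^F(M^*, M) \;=\; \Omega_M^F,
\qquad
\varphi_2^F(M, M^*) \circ \Upsilon_M^F \;=\; \Upsilon_M.
\]
Both reduce to algebraic identities in $H$. For the evaluation, write $F^{-1} = \sum_s \overline{f}^1_s \otimes \overline{f}^2_s$ in homogeneous components; evenness of $F^{-1}$ ensures that the sign arising from the action on $M^* \otimes M$ cancels the sign from the dual action on $M^*$, giving
\[
\Omega_M(F^{-1}(v^* \otimes w)) \;=\; \sum_s v^*\bigl(S(\overline{f}^1_s)\, \alpha\, \overline{f}^2_s\, w\bigr) \;=\; v^*(\alpha_F\, w) \;=\; \Omega_M^F(v^* \otimes w),
\]
directly from the definition $\alpha_F = m(S \otimes \id)((1 \otimes \alpha) F^{-1})$ in \eqref{eq:gauge}. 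For the coevaluation, I would identify $M \otimes M^*$ with $\End_\K(M)$ via $\sum_i b_i \otimes b_i^* \leftrightarrow \id_M$; then $\Upsilon_M^F(1)$ and $\Upsilon_M(1)$ correspond respectively to left multiplication by $\beta_F$ and by $\beta$, and a direct expansion of the action of $F^{-1}$ shows that the required equality reduces to
\[
\sum_{s,t} (-1)^{[\overline{f}^2_s][f^1_t]}\, \overline{f}^1_s\, f^1_t\, \beta\, S(\overline{f}^2_s\, f^2_t) \;=\; \beta,
\]
which is a consequence of $\beta_F = \sum_t f^1_t \beta S(f^2_t)$ together with $F^{-1} F = 1 \otimes 1$ in the super algebra $H \otimes H$.

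The main obstacle will be the bookkeeping of $\Z_2$-graded signs, especially in the coevaluation calculation where the dual action $\overline{f}^2_s \cdot b_i^* = (-1)^{[\overline{f}^2_s][b_i^*]}\, b_i^* \circ S(\overline{f}^2_s)$, the super-commutation used when interpreting the element in $\End_\K(M)$, and the twisted multiplication in $H \otimes H$ all contribute signs. All of them collapse thanks to $F^{\pm 1}$ being an even element of $H \otimes H$. Once the signs are organized, both compatibility diagrams follow mechanically from the defining formulas \eqref{eq:gauge} for $\alpha_F$ and $\beta_F$, so no structural input beyond those formulas and Remark \ref{rem:inv-ribbon-transform} is needed.
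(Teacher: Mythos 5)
Your proof is correct and follows essentially the same route as the paper: use Remark \ref{rem:inv-ribbon-transform} to handle the twist (and the fact that $M^*$ carries the same $H$-module structure in both categories since $S$ is gauge-invariant), then verify the two evaluation/coevaluation compatibility squares directly from the definitions of $\alpha_F$ and $\beta_F$. Your evaluation computation matches the paper's intent exactly. For the coevaluation you verify $F^{-1}\Upsilon_M^F(1)=\Upsilon_M(1)$, which forces you to invoke $F^{-1}F=1\otimes 1$ on top of the definition of $\beta_F$; a marginally more economical route is to verify the equivalent identity $F\,\Upsilon_M(1)=\Upsilon_M^F(1)$, which reads off directly as $\sum_t f^1_t\beta S(f^2_t)=\beta_F$ with no inverse needed. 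The sign bookkeeping you flag is handled correctly — evenness of $F$ and $F^{-1}$ and $[b_i^*]=[b_i]$ make all signs collapse, exactly as you claim.
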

In view of Remark \ref{rem:inv-ribbon-transform},
the proof of this boils down to showing that the following diagrams commute,
\[
\xymatrix{
V^*\otimes V\ar[rd]_{\Omega_V^F}\ar[rr]^{\varphi_2^F(V^*, V)}&& V^*\otimes V\ar[ld]^{\Omega_V}\\
& \K &,
}
\quad \quad
\xymatrix{ &\K\ar[ld]_{\Upsilon_V^F}\ar[rd]^{\Upsilon_V}&\\
V\otimes V^*\ar[rr]^{\varphi_2^F(V^*, V)}&& V\otimes V^*,
}
\]
where ${\Omega_V^F}$ and $\Upsilon_V^F$ are defined by \eqref{eq:ev+exp}
 but using $\alpha_F$ and $\beta_F$. But this immediately follows from the definition of $\alpha_F$ and $\beta_F$.

\subsubsection{Transformations of antipodes}
Recall from \eqref{eq:S-trans} the freedom in the definition of the antipode triple.
Assume that there exists a unit $g$ in $H$ transforming
a given antipode triple $(S, \alpha, \beta)$ to another $(\tilde S, \tilde\alpha, \tilde\beta)$.
Let $\widetilde{H}\text{-mod}$ denote the ribbon category of $H$-modules with the left duality
defined with respect to $(\tilde S, \tilde\alpha, \tilde\beta)$.

If $M$ is an object in $H\text{-mod}$, the action of $H$ on $M^*$ is defined by \eqref{eq:dual}. Now regard
$M^*$ as an object of $\widetilde{H}\text{-mod}$, and denote the $H$-action on $V^*$ by $\bullet$. Then
for any $x\in H$ and $v^*\in M$, we have
$
x\bullet v^* = S^{-1}(g^{-1}) x S^{-1}(g) v^*.
$

For any object $M$ in $H$-mod, we let $M^+=M$ and $M^-=M^*$.
Given finitely $\K$-generated $H$-modules $M_1, M_2, \dots, M_n$, and any sequence
$(\varepsilon_1, \varepsilon_2, \dots, \varepsilon_n)$ with $\varepsilon_i\in\{+, -\}$, we form the sequence
${\bf M}^\varepsilon=(M_1^{\varepsilon_1}, ...,
M_n^{\varepsilon_n})$ of $H$-modules and let
(see Notation \ref{rem:boxtimes})
\begin{eqnarray}\label{eq:ordered-tensor}
\boxtimes{\bf M}^\varepsilon &:=&M_1^{\varepsilon_1}\boxtimes M_2^{\varepsilon_2} \boxtimes ... \boxtimes M_n^{\varepsilon_n}\\
&=&\left(\dots \left(\left(M_1^{\varepsilon_1}\otimes M_2^{\varepsilon_2}\right)\otimes M_3^{\varepsilon_3}\right)
\otimes \dots\right)\otimes  M_n^{\varepsilon_n}.\nonumber
\end{eqnarray}
Introduce the $\K$-linear automorphism of $\boxtimes{\bf M}^\varepsilon$
\begin{eqnarray*}
g_{{\bf M}^\varepsilon}=\left( ...  \left( \left(  S^{-1}(g^{\theta(\varepsilon_1)})
\otimes S^{-1}(g^{\theta(\varepsilon_2)}) \right)\otimes
S^{-1}(g^{\theta(\varepsilon_3)}) \right)\otimes...\right)\otimes
S^{-1}(g^{\theta(\varepsilon_n)}),
\end{eqnarray*}
with $\theta(+)=0$ and $\theta(-)=-1$,
where the action of $g$ on $M_i^*$ is defined by using $S$.

We now define a functor
$
\cG: H\text{-mod}\longrightarrow \widetilde{H}\text{-mod},
$
which restricts to the identity on objects, and for any morphism $f: \boxtimes{\bf M}^\varepsilon\longrightarrow
	\boxtimes{\bf W}^{\varepsilon'}$  in $H\text{-mod}$,
\begin{eqnarray}\label{eq:g-functor}
\begin{aligned}
 \cG(f)=g_{{\bf W}^{\varepsilon'}}\circ f\circ g_{{\bf M}^\varepsilon}^{-1}:
\boxtimes{\bf M}^\varepsilon\longrightarrow \boxtimes{\bf W}^{\varepsilon'} \ \ \text{in $\widetilde{H}\text{-mod}$.}
\end{aligned}
\end{eqnarray}
To prove that this indeed defines a functor, we need to show that $\cG(f)$ commutes
with the action of $H$ for all morphisms $f$.  Consider, for example,
any morphism $f: M_1\otimes M_2^* \longrightarrow W_1^*\otimes W_2$.  Then
\[
\cG(f) (v_1\otimes v_2^*) = (S^{-1}(g^{-1})\otimes \id)f(v_1\otimes S^{-1}(g)v_2^*),
    \quad  \forall v_1\otimes v_2^*\in M_1\otimes M_2^*.
\]
Now for all $x\in H$, a lengthy computation yields
\[
\begin{aligned}
x\bullet \cG(f) (v_1\otimes v_2^*)
=\cG(f) (x\bullet(v_1\otimes v_2^*)).
\end{aligned}
\]
Hence $\cG(f)$ commutes with the action of $H$. The general case can be shown similarly.

We have the tensor functor
$(\cG, \varphi_0=\id, \varphi_2=\id):H\text{-mod}\longrightarrow \widetilde{H}\text{-mod}$,
which clearly preserves braiding and twist. Also for the morphisms $\Omega_M$ and $\Upsilon_M$,
\begin{eqnarray}\label{eq:duality-trans}
\begin{aligned}
\cG(\Omega_M)(v^*\otimes w)
	&=\Omega_M(S^{-1}(g)v^*\otimes w) =(S^{-1}(g)v^*)(\alpha w)\\
	&=v^*(\tilde\alpha w), \quad \forall  v^*\in M^*, \  w\in M, \\
\cG(\Upsilon_M)(1)&=(\id\otimes S^{-1}(g^{-1}))\Upsilon_M(1)=\sum_i \beta b_i \otimes S^{-1}(g^{-1}) b^*_i\\
	&=\sum_i \beta g^{-1} b_i \otimes b^*_i=\sum_i \tilde\beta b_i \otimes b^*_i.
\end{aligned}
\end{eqnarray}
Thus the braided tensor functor $(\cG, \varphi_0=\id, \varphi_2=\id)$ preserves left duality.
This establishes the following result.
\begin{lemma}\label{lem:antipode-duality}
Any antipodal transformation of the form \eqref{eq:S-trans} induces a braided tensor equivalence
$(\cG, \varphi_0=\id, \varphi_2=\id):H\text{-mod}\longrightarrow \widetilde{H}\text{-mod}$ (with
$\cG$ defined by \eqref{eq:g-functor}), which preserves duality and twist.
\end{lemma}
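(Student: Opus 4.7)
The plan is to verify in turn that $\cG$ is a well-defined functor $H\text{-mod}\to\widetilde{H}\text{-mod}$, that $(\cG,\id,\id)$ is a tensor functor, and that it preserves braiding, twist, and left duality. Most of the nontrivial verification is already contained in equation \eqref{eq:duality-trans} and the computation preceding it; the remaining work is essentially bookkeeping.

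First I would check that for any morphism $f\colon \boxtimes{\bf M}^\varepsilon\to\boxtimes{\bf W}^{\varepsilon'}$ in $H\text{-mod}$, the map $\cG(f)$ is a morphism in $\widetilde{H}\text{-mod}$, i.e.\ intertwines the $\bullet$-action of $H$ on source and target. The calculation displayed in the excerpt handles the case $M_1\otimes M_2^*\to W_1^*\otimes W_2$; the general case reduces to that one by induction on the length of $\varepsilon$ and $\varepsilon'$, using two basic observations: (i) the $\bullet$-action on any tensor factor $M_i^-$ is $x\bullet v^* = S^{-1}(g^{-1})\, x\, S^{-1}(g)\, v^*$ while on $M_i^+$ it coincides with the original action; and (ii) because $\Delta$ is an algebra homomorphism and $S^{-1}(g^{\pm 1})\otimes S^{-1}(g^{\pm 1})$ commutes with $\Delta(x)$ in precisely the right pattern on any iterated tensor product, the conjugation by $g_{{\bf M}^\varepsilon}$ converts the $\bullet$-action to the original action. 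Thus $\cG(f)\circ(\text{action of }x)=(\text{action of }x)\circ\cG(f)$, so $\cG$ lands in $\widetilde{H}\text{-mod}$. Functoriality (preservation of identities and compositions) is immediate from $g_{{\bf M}^\varepsilon}\circ g_{{\bf M}^\varepsilon}^{-1}=\id$.

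Next, I would verify that $(\cG,\varphi_0=\id,\varphi_2=\id)$ is a tensor functor. The unit object $\K$ carries the trivial action and is fixed by $\cG$, so $\varphi_0=\id$ is fine. For tensor products, the inductive definition of $g_{{\bf M}^\varepsilon}$ ensures that $g_{{\bf M}^\varepsilon\ast{\bf W}^{\varepsilon'}} = g_{{\bf M}^\varepsilon}\boxtimes g_{{\bf W}^{\varepsilon'}}$ under the strictification, so $\cG(f\boxtimes h)=\cG(f)\boxtimes\cG(h)$; hence $\varphi_2=\id$ is compatible with tensor products of morphisms. The associativity and unit constraints in both $H\text{-mod}$ and $\widetilde{H}\text{-mod}$ are strict (after strictification, Notation \ref{notation}), so the pentagon/triangle compatibilities collapse to identities.

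For braiding and twist: the braiding on $H\text{-mod}$ is determined by the universal $R$-matrix $R$, and the twist by the ribbon element $v$. Neither $R$ nor $v$ is affected by the antipodal transformation \eqref{eq:S-trans} (see Remark \ref{rem:inv-ribbon-transform}), and both $\Delta$ and $\Phi$ are unchanged. The $R$-action on $\boxtimes{\bf M}^\varepsilon$ only differs from that on the $\widetilde{H}$-side through the way $H$ acts on the $-$ slots, and the conjugation by $g_{{\bf M}^\varepsilon}$ absorbs exactly this discrepancy; a short direct check using the identity $c_{V,W}=\tau_{V,W}\circ R$ shows $\cG(c_{V,W})=c_{V,W}$, and similarly $\cG(\theta_M)=\theta_M$ since $v$ is central-like with respect to this conjugation. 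Finally, preservation of duality is exactly the content of \eqref{eq:duality-trans}: substituting the definitions of $\tilde\alpha=g\alpha$ and $\tilde\beta=\beta g^{-1}$ yields $\cG(\Omega_M)=\widetilde{\Omega}_M$ and $\cG(\Upsilon_M)=\widetilde{\Upsilon}_M$, where the tilded maps are defined by \eqref{eq:ev+exp} using $(\tilde S,\tilde\alpha,\tilde\beta)$. Together these assemble into a braided tensor equivalence preserving duality and twist; an inverse functor is provided by the antipodal transformation $g^{-1}$ in the opposite direction.

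The main obstacle is the first step: keeping straight the sign/order bookkeeping so that conjugation by $g_{{\bf M}^\varepsilon}$ exactly converts the $\bullet$-action (which differs from the original action only on the dual slots, and only by a $S^{-1}(g^{-1})(-)S^{-1}(g)$ sandwich) into the original $H$-action. Once this inductive verification is in place for arbitrary sign sequences, every other statement in the lemma is essentially a line-by-line check.
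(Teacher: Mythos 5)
Your proposal follows essentially the same route as the paper: establish that $\cG(f)$ intertwines the $\bullet$-action by the slot-wise conjugation mechanism (the paper checks one representative case and says "the general case can be shown similarly"), observe that $R$, $v$ and $\Phi$ are untouched by the antipodal transformation so that braiding and twist are preserved, and read off preservation of duality directly from \eqref{eq:duality-trans}. The only caution is that your item (ii) is worded loosely — $S^{-1}(g^{\pm1})\otimes S^{-1}(g^{\pm1})$ does not commute with $\Delta(x)$; what is actually used is that the $\bullet$-action on $\boxtimes{\bf M}^\varepsilon$ equals the $g_{{\bf M}^\varepsilon}$-conjugate of the original action, which is the slot-wise statement you state correctly in item (i), and the identity $g_{{\bf M}^\varepsilon\ast{\bf W}^{\varepsilon'}}=g_{{\bf M}^\varepsilon}\otimes g_{{\bf W}^{\varepsilon'}}$ should be read as an equality of $\K$-linear operators rather than a $\boxtimes$-product of morphisms.
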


To summarise,
\begin{theorem}\label{thm:ribbon-cat-equiv}
Let $H$ and $H'$ be equivalent ribbon quasi Hopf superalgebras. Then there exists a braided tensor equivalence
$H\text{-mod}\longrightarrow H'\text{-mod}$ which preserves duality and twist.
\end{theorem}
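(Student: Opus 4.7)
The plan is to decompose the equivalence $(f,F,g)$ into its three constituent pieces and pass each through to a braided tensor equivalence at the level of module categories. By Definition \ref{def:equiv-ribbon-quasi-Hopf}, the data consist of a superalgebra isomorphism $f:H\to H'$, a gauge transformation $F\in H'\otimes H'$, and an invertible element $g\in H'$ relating antipode triples. I will build the equivalence $H\text{-mod}\to H'\text{-mod}$ as a composition of three equivalences indexed by $g$, $F$, $f$ in that order.

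First I would use $f$ to transport the entire ribbon quasi Hopf superalgebra structure of $H'$ back to $H$, obtaining a second structure on $H$, call it $H^{(f)}$. Because $f$ is a superalgebra isomorphism, the pullback functor $f^\ast:H'\text{-mod}\to H^{(f)}\text{-mod}$ is a strict isomorphism of categories, and under it the tensor product, braiding, left duality (cf. \eqref{eq:ev+exp}), and twist \eqref{eq:theta} are carried to the corresponding structures on $H^{(f)}\text{-mod}$ tautologically. Thus this step contributes a braided tensor equivalence preserving duality and twist.

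Next I would handle the gauge transformation. By the defining relations in Definition \ref{def:equiv-ribbon-quasi-Hopf}, the structure on $H^{(f)}$ agrees with that of $H$ up to the gauge transformation $\tilde F:=(f^{-1}\otimes f^{-1})(F)$ and an antipodal adjustment by $f^{-1}(g)$. Applying Lemma \ref{lem:gauge-cat} to $\tilde F$ gives a braided tensor equivalence $H^{(f)}\text{-mod}\to H^{(f)}_{\tilde F}\text{-mod}$, and Lemma \ref{lem:gauge-duality} says this equivalence preserves left duality (taken with respect to the gauge-adjusted triple $(S, \alpha_{\tilde F}, \beta_{\tilde F})$) and the twist. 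The coproduct, associator, $R$-matrix, and twist of $H^{(f)}_{\tilde F}$ now match those of $H$ exactly; only the antipode triple can differ.

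Finally I would account for the antipode transformation by $f^{-1}(g)$ via Lemma \ref{lem:antipode-duality}, which produces a braided tensor equivalence $H^{(f)}_{\tilde F}\text{-mod}\to H\text{-mod}$ (running it in the appropriate direction) that preserves duality and twist. Composing the inverse of the resulting chain
\[
H\text{-mod}\;\longrightarrow\;H^{(f)}_{\tilde F}\text{-mod}\;\longrightarrow\;H^{(f)}\text{-mod}\;\longrightarrow\;H'\text{-mod}
\]
yields the desired equivalence. The main obstacle I anticipate is bookkeeping: carefully verifying that the compatibilities in Definition \ref{def:equiv-ribbon-quasi-Hopf} exactly match the hypotheses of Lemmas \ref{lem:gauge-cat}, \ref{lem:gauge-duality}, \ref{lem:antipode-duality} and Theorem \ref{thm:braided-gauge} when transported through $f$, so that the intermediate structures line up and each lemma applies without ambiguity. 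Once this matching is made precise, the theorem follows by composing braided tensor equivalences that each independently preserve duality and twist.
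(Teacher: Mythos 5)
Your proposal is correct and follows essentially the same route as the paper's proof: both decompose the equivalence $(f,F,g)$ and compose the elementary braided tensor equivalences furnished by the algebra isomorphism $f$, Lemma~\ref{lem:gauge-duality} (via Lemma~\ref{lem:gauge-cat}), and Lemma~\ref{lem:antipode-duality}. The only cosmetic difference is that you transport the gauge and antipodal transformations to the $H$-side via $f^{-1}$, whereas the paper applies them directly on $H'$ before pulling back along $f$.
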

\begin{proof} Let $(f, F, g): H\longrightarrow H'$ be an equivalence of ribbon quasi Hopf superalgebras
(cf. Definition \ref{def:equiv-ribbon-quasi-Hopf}), where $f: H\longrightarrow H'$ is a superalgebra isomorphism,
and $F$ and $g$ are respectively gauge and antipodal transformations on $H'$.
Clearly $f$  induces a braided tensor equivalence
$H'\text{-mod}\longrightarrow H\text{-mod}$ which preserves duality and twist. Combining this observation
with Lemma \ref{lem:gauge-duality} and Lemma \ref{lem:antipode-duality}, we obtain the theorem.
\end{proof}

\end{document}